\documentclass[12pt,twoside]{amsart}

\usepackage[english]{babel}
\usepackage[T1]{fontenc}
\usepackage{lmodern}
\usepackage{amsmath,amsthm,amssymb,amsfonts}
\usepackage{hyperref}
\usepackage[capitalise]{cleveref}
\usepackage[left=3cm,right=3cm,top=3cm,bottom=3cm,includeheadfoot]{geometry}
\usepackage{bbm}
\usepackage{xcolor}
\usepackage{enumerate,enumitem}
\usepackage{fancyhdr}
\usepackage{tikz-cd}
\usepackage[abbrev,nobysame]{amsrefs}

\usepackage[textsize=tiny]{todonotes}

\usepackage{xcolor}

\hypersetup{colorlinks=true,
	linkcolor=teal,
	citecolor=orange}

\newtheorem{satz}{Satz}
\numberwithin{satz}{section}
\newtheorem{cor}[satz]{Corollary}
\newtheorem{theorem}[satz]{Theorem}
\newtheorem{lemma}[satz]{Lemma} 

\newtheorem{conjecture}[satz]{Conjecture} 
\newtheorem{problem}[satz]{Problem}

\theoremstyle{definition}

\newtheorem{remark}[satz]{Remark}

\newtheorem{manualtheoreminner}{Theorem}
\newenvironment{manualtheorem}[1]{%
	\renewcommand\themanualtheoreminner{#1}%
	\manualtheoreminner
}{\endmanualtheoreminner}

\numberwithin{equation}{section}

\newcommand\R{\mathbb{R}}
\newcommand\C{\mathbb{C}}

\newcommand\N{\mathbb{N}}

\newcommand\vol{\operatorname{vol}}

\newcommand\id{\operatorname{id}}

\newcommand\spann{\operatorname{span}}

\newcommand\supp{\operatorname{supp}}
\newcommand\tr{\operatorname{tr}}

\newcommand\Rea{\operatorname{Re}}

\newcommand\Kc{\mathcal{K}}

\newcommand\SO{\operatorname{SO}}

\newcommand{\norm}[1]{\left\lVert#1\right\rVert}

\BibSpec{article}{%
	+{}{\PrintAuthors}  		{author}
	+{,}{ \textit}     		{title}
	+{,}{ }             		{journal}
	+{}{ \textbf}       		{volume}
	+{}{ \parenthesize} 		{date}
	+{}{, no. } 		{number}
	+{,}{ }      	      		{conference}
	+{,}{ }      	      		{book}
	+{,}{ }            		{pages}
	+{,}{ }            	 	{note}
	+{,}{ }            	 	{status}
	+{,}{  \texttt } {eprint}
	+{.}{}              {transition}
}

\BibSpec{book}{%
	+{}{\PrintAuthors}  {author}
	+{,}{ \textit}      {title}
	+{,}{ }             {publisher}
	+{,}{ }             {place}
	+{,}{ }             {date}
	+{.}{}              {transition}
}

\BibSpec{incollection}{%
	+{}  {\PrintAuthors}                {author}
	+{,} { \textit}                     {title}
	+{.} { }                            {part}
	+{:} { \textit}                     {subtitle}
	+{,} { \PrintContributions}         {contribution}
	+{,} { \PrintConference}            {conference}
	+{,} { }                            {booktitle}
	+{,} { pp.~}                        {pages}
	+{,}{ }       		{series}
	+{}{ \textbf}       		{volume}
	+{,} { }                            {publisher}
	+{,} { }                 {date}
	+{,} { }                            {status}
	+{,} { \PrintDOI}                   {doi}
	+{,} { available at \eprint}        {eprint}
	+{}  { \parenthesize}               {language}
	+{}  { \PrintTranslation}           {translation}
	+{;} { \PrintReprint}               {reprint}
	+{.} { }                            {note}
	+{.} {}                             {transition}
}

\usepackage{xcolor}
\usepackage[textsize=tiny]{todonotes}%colorinlistoftodos,prependcaption,textsize=tiny

\begin{document}
	\title[local log-Brunn-Minkowski inequality for bodies of revolution]{The local logarithmic Brunn-Minkowski inequality for bodies of revolution}
	\author{Luca Iffland}
	\address{Institut f\"ur Mathematik \newline%
		\indent Goethe-Universit\"at Frankfurt \newline%
		\indent Robert-Mayer-Str. 10, 60325 Frankfurt am Main, Germany}
	\email{iffland@math.uni-frankfurt.de}
	
	\begin{abstract}
		We prove the local logarithmic Brunn-Minkowski inequality for bodies of revolution. Furthermore, we give a generalization for one origin symmetric body of revolution and one body of revolution that does not need to be symmetric and restrict possible equality cases. The proof uses an operator theoretic approach together with the decomposition of spherical functions into isotypical components with respect to rotations around a fixed axis.
	\end{abstract}
	
	\maketitle
	
	\tableofcontents
	
	\thispagestyle{empty}
	
	\section{Introduction}
	
	Denote by $\Kc(\R^n)$ the set of compact convex sets in $\R^n$ which we refer to as convex bodies. The classical Brunn-Minkowski inequality states that for all $K,L\in\Kc(\R^n)$ and all $t\in[0,1]$
	\begin{align}\label{bm_ineq}
		\vol(tK+(1-t)L)^{\frac{1}{n}}\geq t\vol(K)^{\frac{1}{n}}+(1-t)\vol(L)^{\frac{1}{n}},
	\end{align}
	where $aK+bL=\{ax+by:x\in K,y\in L\}$ for $a,b \geq 0$ denotes the Minkowski addition of two sets in $\R^n$ and $\vol$ denotes the volume functional. This inequality, which goes back to the end of the 19th century, is one of the fundamentals of the Brunn-Minkowski theory and has an enormous amount of applications in convex geometry, stochastics and many other areas of mathematics. It is equivalent to the inequality
	\begin{align}\label{bm_ineq_weak}
		\vol(tK+(1-t)L)\geq \vol(K)^t\vol(L)^{1-t},
	\end{align}
	which might seem weaker than the above form as the geometric mean is always smaller than the arithmetic mean. However, the implication (\ref{bm_ineq_weak}) $\Rightarrow$ (\ref{bm_ineq}) can be seen by rescaling $K$ and $L$. We refer the reader to \cite{gardener} for more information about the Brunn-Minkowski inequality, its different forms and applications. In \cite{boeroeczkylutwagyangzhang}, Böröczky, Lutwak, Yang and Zhang conjectured a stronger version of the Brunn-Minkowski inequality, in which the left hand side in (\ref{bm_ineq_weak}) is replaced by the volume of some kind of geometric mean $K^tL^{1-t}$. For $K\in\Kc(\R^n)$ denote by
	\begin{align*}
		h_K:x\mapsto \max_{y\in K}\langle x,y\rangle
	\end{align*}
	its support function. The function $h_K$ can be viewed as either a function on the sphere or a 1-homogeneous function on $\R^n$. We will denote both the same and it will be always clear in the following, where the function is defined. It is well known that
	\begin{align*}
		h_{tK+(1-t)L}=th_K+(1-t)h_L,
	\end{align*}
	which suggests for the definition of $K^tL^{1-t}$ to replace the arithmetic mean of the two support functions of $K$ and $L$ by its geometric mean. However, $h_K^th_L^{1-t}$ is not, in general, the support function of a convex body. Instead, we define $K^tL^{1-t}$ as the Wulff shape of $h_K^th_L^{1-t}$, i.e.
	\begin{align*}
		K^tL^{1-t}=\bigcap_{x\in S^{n-1}}\left\{z\in\R^n:\langle x,z\rangle\leq h_K(x)^th_L(x)^{1-t} \right\}
	\end{align*}
	for convex bodies $K$ and $L$ that contain the origin in their interior. We say that $K\in\Kc(\R^n)$ is origin symmetric if $K=-K$. Denote by $\Kc_e(\R^n)$ the set of convex bodies, that are origin symmetric and contain the origin in their interior.
	
	The conjectured logarithmic Brunn-Minkowski inequality now states the following. 
	\begin{conjecture}\label{log_bm}
		For $K,L\in\Kc_e(\R^n)$ and all $t\in[0,1]$
		\begin{align}\label{log_bm_ineq}
			\vol(K^tL^{1-t})\geq \vol(K)^t\vol(L)^{1-t}.
		\end{align}
	\end{conjecture}
	For origin symmetric convex bodies, this is clearly stronger than the Brunn-Minkowski inequality as $K^tL^{1-t}\subseteq tK+(1-t)L$ by the inequality for the arithmetic and geometric mean. One can easily see that even in dimension $1$, the inequality (\ref{log_bm_ineq}) is not true for arbitrary convex bodies.
	
	To interpolate between the Brunn-Minkowski inequality and the logarithmic Brunn-Minkowski inequality, we can define the $L^p$-convex combination of $K,L\in\Kc_e(\R^n)$ for $p\neq0$ by
	\begin{align*}
		tK+_p(1-t)L = \bigcap_{x\in S^{n-1}}\left\{z\in\R^n:\langle x,z\rangle\leq \left(th_K(x)^p+(1-t)h_L(x)^p\right)^{\frac{1}{p}} \right\}
	\end{align*}
	and consider the following problem.
	\begin{problem}[$L^p$-Brunn-Minkowski problem]
		Under which conditions does
		\begin{align}\label{lp-brunn-minkowski-ineq}
			\vol(tK+_p(1-t)L)^{\frac{p}{n}}\geq t\vol(K)^{\frac{p}{n}}+(1-t)\vol(L)^{\frac{p}{n}}
		\end{align}
		hold?
	\end{problem}
	This question goes back to Firey \cite{firey} in 1962. Clearly, for $p=1$, the $L^p$-convex combination is just the classical Minkowski convex combination and hence (\ref{lp-brunn-minkowski-ineq}) is just the Brunn-Minkowski inequality. For $p\geq 1$ it turns out that the mapping $x\mapsto\left(th_K(x)^p+(1-t)h_L(x)^p\right)^{\frac{1}{p}}$ is indeed the support function of $tK+_p(1-t)L$. Firey could prove in \cite{firey} that the $L^p$-Brunn-Minkowski inequality (\ref{lp-brunn-minkowski-ineq}) for $p>1$ holds for all convex bodies $K$ and $L$.
	
	As $\left(th_K(x)^p+(1-t)h_L(x)^p\right)^{\frac{1}{p}}\to h_K(x)^th_L(x)^{1-t}$ for $p\to 0$, the geometric mean $K^tL^{1-t}$ is often also denoted as $tK+_0(1-t)L$. Therefore, Conjecture \ref{log_bm_ineq} is the natural extension of (\ref{lp-brunn-minkowski-ineq}) for $p=0$. For $0<p<1$ the $L^p$-Brunn-Minkowski inequality is known to not hold true for arbitrary convex bodies, which can be seen analogously to the case $p=0$. Böröczky, Lutwak, Yang and Zhang proved in \cite{boeroeczkylutwagyangzhang} when they conjectured the logarithmic Brunn-Minkowski inequality that it implies the $L^p$-Brunn-Minkowski inequality (\ref{lp-brunn-minkowski-ineq}) for all $p>0$ and all origin symmetric convex bodies $K$ and $L$. Furthermore they were able to show the logarithmic Brunn-Minkowski inequality in $\R^2$. For $p<0$, even under the assumption that $K$ and $L$ are origin symmetric, counterexamples to the $L^p$-Brunn-Minkowski inequality are known.
	
	Kolesnikov and Milman showed in \cite[Theorem 1.1]{kolesnikovmilman} that (\ref{lp-brunn-minkowski-ineq}) holds true for $p\in\left[1-cn^{-\frac{3}{2}},1\right]$ if the two bodies are origin symmetric and sufficiently close. They also proved that (\ref{log_bm_ineq}) holds true for origin symmetric bodies sufficiently close to the unit $\ell^q$-Ball for $q\geq 2$. Their results in \cite[Theorem 1.2, Theorem 1.3]{kolesnikovmilman} generalize the works \cite{colesantilivshyts, colesantilivshytsmarsiglietti} on the logarithmic Brunn-Minkowski inequality locally around the unit ball. In \cite{chenhuangliliu} the local result for $p\in\left[1-cn^{-\frac{3}{2}},1\right]$ was then extended to all origin symmetric convex bodies by Chen, Huang, Li and Liu. Böröczky and Kalantzopoulos \cite{boeroeczkykalantzopoulos} showed that (\ref{log_bm_ineq}) holds true if $K$ and $L$ are both symmetric under reflections along the same set of $n$ independent hyperplanes. The major ingredients in their proof are simplicial cones and the representations of Coxeter groups.
	
	Putterman \cite{putterman} showed that for any $p\in[0,1)$ the $L^p$-Brunn-Minkowski inequality is equivalent to the following local $L^p$-Brunn-Minkowski inequality confirming \cite[Conjecture 3.8]{kolesnikovmilman}. Denote by $V$ the mixed volume functional in $\R^n$ and by $S_K$ the surface area measure of the convex body $K$.
	
	\begin{conjecture}[local $L^p$-Brunn-Minkowski inequality]\label{loc_log_bm}
		For all $p\geq 0$ and $K,L\in\Kc_e(\R^n)$
		\begin{align}\label{loc_log_bm_ineq}
			\frac{V(L,K[n-1])^2}{\vol(K)}\geq \frac{n-1}{n-p}V(L,L,K[n-2])+\frac{1-p}{n(n-p)}\int_{S^{n-1}}\frac{h_L^2}{h_K}dS_K.
		\end{align}
	\end{conjecture}
	For $p=0$ the inequality (\ref{loc_log_bm_ineq}) is called the local logarithmic Brunn-Minkowski inequality. Van Handel \cite{vanhandel} showed that the local logarithmic Brunn-Minkowski inequality holds true if $K$ is a zonoid, i.e. the Hausdorff limit of finite Minkowski sums of line segments. His proof uses an observation of Kolesnikov and Milman \cite{kolesnikovmilman} and a variant of the Bochner method due to Shenfeld and himself \cite{shenfeldvanhandel} to transfer the statement to some inequality for an elliptic operator. The proof of the latter is then by induction on $n$. However, this is not known to imply the logarithmic Brunn-Minkowski inequality for zonoids. This is due to the fact that even if $K$ and $L$ are zonoids, the body $K^tL^{1-t}$, in general, is not a zonoid. In \cite{kolesnikovmilman}, Kolesnikov and Milman already used their elliptic operator to give a sufficient condition for the local logarithmic Brunn-Minkowski inequality to hold in the form of boundary Poincaré-type inequality. Furthermore, they confirm this sufficient condition in the case where $K$ is symmetric under reflections along $n$ independent hyperplanes. 
	
	In this work, we focus on Conjecture \ref{loc_log_bm} in the case where $K$ is a convex body of revolution, i.e. symmetric under rotations around a fixed axis. The treatment of classical problems in convex and integral geometry in the respective symmetry setting is an active area of research. In 1970, Firey first studied the Christoffel-Minkowski problem for convex bodies of revolution \cite{firey2} in the smooth setting. In the recent works \cite{braunerhofstaetterortegamoreno2, mussnigulivelli} the Christoffel-Minkowski problem for convex bodies of revolution is solved without smoothness. See also \cite{braunerhofstaetterortegamoreno, colesantimussnigludwig, hugmussnigulivelli, knoerr} for further work on integral geometric problems with symmetry under rotations around a fixed axis. In this article we give a detailed description of the elliptic operator appearing in \cite{kolesnikovmilman, vanhandel}, that is closely related to the local $L^p$-Brunn-Minkowski inequality, for the setting of convex bodies of revolution. We use this description to prove the local logarithmic Brunn-Minkowski inequality in this case. Though, this specific result is already contained in the result by Kolesnikov and Milman, our proof adds yet another perspective on the respective eigenvalue problem. In particular, we can prove the local logarithmic Brunn-Minkowski inequality for a body of revolution $K$ and a body $L$ that does not necessarily need to be symmetric around the origin. Furthermore we can restrict possible equality cases. 
	
	\begin{manualtheorem}{A}\label{local_log_bm_zonal}
		Let $K$ be an origin symmetric convex body of revolution with nonempty interior and let $L$ be an arbitrary convex body such that
		\begin{align}\label{center_of_mass_condition}
			\int_{S^{n-1}}\frac{xh_L(x)}{h_K(x)}dS_K(x)=0.
		\end{align}
		Then the pair $(K,L)$ satisfies (\ref{loc_log_bm_ineq}).
	\end{manualtheorem}
	
	The condition (\ref{center_of_mass_condition}) means geometrically that the center of mass of $L$ with respect to the positive measure $\frac{1}{h_K}dS_K$ lies in the origin. Therefore Theorem \ref{local_log_bm_zonal} implies that for any origin symmetric convex body of revolution $K$ and an arbitrary convex body $L$ there exists some translation $\tilde{L}=L+x$, where $x\in\R^n$ such that the pair $(K,\tilde{L})$ satisfies (\ref{loc_log_bm_ineq}). The same argument shows that for any convex body of revolution $K$ and any convex body $L$, there exists a translation of $\tilde{L}$ of $L$ such that the pair $(K,\tilde{L})$ does not satisfy (\ref{loc_log_bm_ineq}). 
	
	Furthermore we can give a restriction of the equality cases in Theorem \ref{local_log_bm_zonal} under some mild regularity assumptions on $K$.
	
	\begin{manualtheorem}{B}\label{equality_cases}
		Let $K$ and $L$ be as in Theorem \ref{local_log_bm_zonal} and assume that on $S^{n-1}$ the measure $\mathcal{H}^{n-1}$ is absolutely continuous with respect to $S_K$. Then if equality holds in (\ref{loc_log_bm_ineq}), $L$ must be a convex body of revolution.
	\end{manualtheorem}
	
	To prove Theorem \ref{local_log_bm_zonal} we will reformulate it into a spectral statement (Theorem \ref{spectral_gap_theorem}) for an appropriate elliptic operator $\mathcal{D}_K$ in the fashion of \cite{vanhandel}. This is a classical method, that has already been used in the early 20th century to prove the famous Alexandrov-Fenchel inequality, cf. \cite{aleksandrov1} and \cite{hilbert}, but also has many recent applications to geometric inequalities in integral geometry \cite{abardiawannerer, bernigkotrbatywannerer, kotrbatywannerer}. As we work in the symmetry setting, we will then use the general representation theory of the group $\SO(n-1)$ to decompose the domain of $\mathcal{D}_K$ into invariant subspaces $E_m$ for ${m\in\N_0}$ that are pairwise orthogonal. This decomposition, which is given explicitly in (\ref{direct_sum_L2}), allows us to transfer the spectral theorem to a countable set of eigenvalue problems for some Sturm-Liouville operators on $[-1,1]$ with a Jacobi-type weight. In Lemma \ref{lemma_explicit_op} these operators are described explicitly. 
	
	Such a decomposition has also been used in other areas of convex and integral geometry, cf. \cite{abardiawannerer} for some application towards Alexandrov-Fenchel-type inequalities for valuations that are invariant under the unitary group or \cite{kotrbatywannerer} for some Hodge-Riemann-type relations in an $\SO(n)$-invariant setting. However, we want to point out that in these works, the decomposition consists of finite dimensional vector spaces, whereas the spaces $E_m$ in our work are infinite dimensional. This is a reflection of the fact that in contrast to the groups treated in the mentioned works the group $\SO(n-1)$ does not act transitively on the unit sphere.
	
	The spectral problem on $E_0$ for even functions (Theorem \ref{theorem_eigenvalues_m=0}) is equivalent to Conjecture \ref{loc_log_bm} for two bodies of revolution that are symmetric under rotations around the same axis. Clearly two such bodies of revolution are automatically symmetric under reflections along the same set of $n$ independent hyperplanes and therefore the work by Böröczky and Kalantzopoulos \cite{boeroeczkykalantzopoulos} applies whenever $L$ is origin symmetric. To prove the stronger version without assuming origin symmetry (Theorem \ref{theorem_m=0}), we heavily rely on the explicit form of the operator restricted to $E_0$ in our setting and use a classical homotopy argument as in Alexandrov's second proof of the Alexandrov-Fenchel inequality \cite{aleksandrov1}. This will be the main content of Section \ref{section_m=0}.
	
	The spectral problem on the other $E_m$ will then follow from the result on $E_0$. This implication will be the content of Section \ref{section_m=1} and Section \ref{section_m>1}, where we treat the cases $m=1$ and $m\geq 2$ separately. This is essentially due to the fact that $E_1$ contains linear functions in contrast to the $E_m$ for $m\geq 2$. Theorem \ref{theorem_m=0} and Theorem \ref{theorem_m>0} together prove Theorem \ref{spectral_gap_theorem} and, therefore, Theorem \ref{local_log_bm_zonal}.
	
	In Section \ref{section_aux_results}, we prove some auxiliary results leading to the restriction of equality cases (Theorem \ref{equality_cases}) for sufficiently regular $K$. To do so, we use the fact that we can improve local logarithmic Brunn-Minkowski inequality proven in Theorem \ref{local_log_bm_zonal} by some stability term on the $E_m$ for $m\geq 1$ (Theorem \ref{stronger_loc_log_bm}). This will then imply that for a sufficiently regular body of revolution $K$ and any convex body $L$ such that equality holds in (\ref{loc_log_bm_ineq}), all projections of $h_L$ onto the $E_m$ must vanish for $m\geq 1$ and hence $K$ and $L$ must be bodies of revolution. 
	
	\section{Preliminaries}
	
	Fix an orthonormal basis $e_1,\dots,e_n$ of $\R^n$ and denote by $\langle\cdot,\cdot\rangle$ the standard euclidean inner product. In the whole paper we assume $n\geq 2$.
	
	Let $D$ be the standard flat connection on $\R^n$ and $\nabla$ the Levi-Civita connection of the standard metric on $S^{n-1}$. One can easily see, that for any $m$-homogeneous function $f\in C^2(\R^n)$ and $x\in S^{n-1}$ we have $Df(x)|_{T_xS^{n-1}}=\nabla f(x)$ as well as $D^2f(x)|_{T_xS^{n-1}}=\nabla^2f(x)+mf(x)\id$.
	
	In abuse of notation we also let $D:(\R^{(n-1)\times(n-1)})^{n-1}\to\R$ be the mixed discriminant, i.e. the symmetric multilinear extension of the determinant defined by
	\begin{align*}
		\det(\lambda_1A_1+\dots+\lambda_mA_m)=\sum_{i_1,\dots,i_{n-1}=1}^{m}D(A_{i_1},\dots,A_{i_{n-1}})\lambda_{i_1}\dots\lambda_{i_{n-1}}
	\end{align*}
	for all $\lambda_1,\dots,\lambda_m\in\R$ and $A_1,\dots,A_m\in\R^{(n-1)\times(n-1)}$. Furthermore Minkowski showed, that similarly the volume of a Minkowski linear combination of convex bodies is a homogeneous polynomial in the coefficients and therefore we can analogously define a symmetric Minkowski multilinear functional $V:\Kc(\R^n)^n\to \R$ by
	\begin{align*}
		\vol(\lambda_1K_1+\dots+\lambda_mK_m)=\sum_{i_1,\dots,i_n=1}^m V(K_{i_1},\dots,K_{i_n})\lambda_{i_1}\dots\lambda_{i_n}
	\end{align*}
	for all $\lambda_1,\dots,\lambda_m\geq 0$ and $K_1,\dots,K_m\in\Kc(\R^n)$. Clearly, $V(K,\dots,K)=\vol(K)$ by the homogeneity of the volume. The mixed volume can be extended to the subset of continuous functions on the sphere that are differences of support functions of convex bodies in the following way. Let $f=h_K-h_L$ for some $K,L\in\Kc(\R^n)$. Then set
	\begin{align*}
		V(f,C_1,\dots,C_{n-1})=V(K,C_1,\dots,C_{n-1})-V(L,C_1,\dots,C_{n-1})
	\end{align*}
	for any $C_1,\dots,C_{n-1}\in\Kc(\R^n)$. One easily checks that this is well defined. The extension of $V$ in all other entries is similar. We define the mixed area measure of the convex bodies $C_1,\dots,C_{n-1}$ to be the unique Borel measure $S_{C_1,\dots,C_{n-1}}$ on $S^{n-1}$ such that
	\begin{align*}
		V(f,C_1,\dots,C_{n-1})=\frac{1}{n}\int_{S^{n-1}} f dS_{C_1,\dots,C_{n-1}}
	\end{align*}
	for all differences of support functions $f$. The measure $S_K=S_{K,\dots,K}$ is called the surface area measure of $K$. Analogously to the mixed volume, the $C_i$ can be replaced by functions on the sphere by writing each function as a difference of support functions.
	
	We say $K\in \Kc(\R^n)$ is of the class $C^2_+$ if $h_K\in C^2(S^{n-1})$ and $D^2h_K>0$. For such a convex body $K$, the surface area measure $S_K$ is precisely the pullback of $\mathcal{H}^{n-1}$ on $\partial K$ under the inverse Gauss map. If $C_1,\dots,C_{n-1}\in\Kc(\R^n)$ are of class $C^2_+$ we have for the mixed area measure
	\begin{align*}
		dS_{C_1,\dots,C_{n-1}}=D(D^2h_{C_1},\dots,D^2h_{C_{n-1}})d\mathcal{H}^{n-1}.
	\end{align*}
	
	Now let $K\in\Kc(\R^n)$ be of the class $C^2_+$ with the origin in the interior. Then we define the measure $\mu_K$ on $S^{n-1}$ by
	\begin{align*}
		d\mu_K=\frac{1}{nh_K}dS_K=\frac{1}{nh_K}\det(D^2h_K)d\mathcal{H}^{n-1}.
	\end{align*}
	Furthermore for $f\in C^2(S^{n-1})$ define
	\begin{align*}
		\mathcal{D}_Kf=h_K\frac{D(D^2f,D^2h_K[n-2])}{\det(D^2h_K)}.
	\end{align*}
	By \cite{shenfeldvanhandel} the operator $\mathcal{D}_K$ extends to a second order uniformly elliptic symmetric differential operator on $L^2(S^{n-1},\mu_K)$. Its self-adjoint extension therefore has a discrete spectrum $\lambda_0>\lambda_1>\dots$ descending to $-\infty$ and the greatest eigenvalue $\lambda_0$ is simple and has a strictly positive eigenfunction. Obviously $h_K$ is an eigenfunction to the eigenvalue $1$ and hence $\lambda_0=1$. It is well known that the second eigenvalue is $\lambda_1=0$ with linear functions as eigenfunctions. This is equivalent to the Alexandrov-Fenchel inequality. The proof of this fact is known as Alexandrov's second proof of the Alexandrov-Fenchel inequality cf. \cite{aleksandrov1} and \cite{hilbert} for the classical proof or \cite{shenfeldvanhandel} for a recent simplified proof using the so called Bochner method. However, it will also follow from our work without using this particular result.
	
	Our aim is to prove the following spectral theorem.
	
	\begin{theorem}\label{spectral_gap_theorem}
		If $K$ is an origin symmetric body of revolution, then any eigenvalue $\lambda \notin  \{0,1\}$ of $\mathcal{D}_K$ satisfies 
		\begin{align*}
			\lambda \leq -\frac{1}{n-1}.
		\end{align*}
	\end{theorem}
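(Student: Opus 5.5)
We reduce the statement to a family of one-dimensional Sturm--Liouville problems by exploiting the $\SO(n-1)$-symmetry of $K$; we take the axis of revolution to be $e_n$. Since $K$ is invariant under rotations fixing $e_n$, both $\mu_K$ and $\mathcal{D}_K$ commute with the action of $\SO(n-1)$ on $L^2(S^{n-1},\mu_K)$. We write $x=(\sqrt{1-t^2}\,\xi,t)$ with $t\in[-1,1]$ and $\xi\in S^{n-2}$, and expand each function in spherical harmonics on $S^{n-2}$. This gives an orthogonal decomposition $L^2(S^{n-1},\mu_K)=\bigoplus_{m\ge0}E_m$, where $E_m$ consists of functions $g(t)Y_m(\xi)$ with $Y_m$ a degree-$m$ harmonic. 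Each $E_m$ is invariant under $\mathcal{D}_K$, so every eigenfunction can be taken in some $E_m$.

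Writing $h_K(x)=\phi(t)$, we compute the Hessian $D^2h_K$ in the frame consisting of the meridian direction and the $n-2$ parallel directions. It is diagonal, with one entry $a(t)$ involving $\phi''$ and $n-2$ equal entries $b(t)$ involving $\phi,\phi'$. The mixed discriminant $D(D^2f,D^2h_K[n-2])$ is then an explicit combination: the meridian second derivative of $f$ weighted by $b^{n-2}$, plus the tangential trace weighted by $ab^{n-3}$. On $E_m$ this yields an operator of the form
\begin{align*}
\mathcal{D}_K^{(m)}g=\frac{h_K}{ab^{n-2}}\Big(b^{n-2}(\text{radial part of }g)+ab^{n-3}(\ldots)-\frac{m(m+n-3)}{1-t^2}\,ab^{n-3}\,g\Big),
\end{align*}
which is a Sturm--Liouville operator on $[-1,1]$ with a Jacobi-type weight. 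The key observation is that the $m$-dependence enters only through a nonpositive potential term, which is monotone in $m$.

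We then treat the subspaces in turn. On $E_0$ (functions of $t$ alone) the top eigenvalue is $1$ with eigenfunction $h_K$, and the linear function $t$ has eigenvalue $0$. We need to show that every further eigenvalue is at most $-1/(n-1)$. For even $g$ this is the local log-BM inequality for two coaxial bodies of revolution, and it can be obtained by a homotopy argument in the style of Alexandrov. We deform $K$ through origin-symmetric bodies of revolution $K_s$ to the ball, where the spectrum of the spherical Laplacian plus $(n-1)$, scaled by $1/(n-1)$, is explicit: the next eigenvalue on even zonal functions is $-1/(n-1)$. Eigenvalues move continuously along the deformation, so it suffices to show that none can cross $-1/(n-1)$ from below. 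We would establish this with an explicit identity for the operator on $E_0$, exploiting the Sturm--Liouville structure (simple eigenvalues, oscillation counting): eigenfunctions are ordered by their number of zeros, and the symmetry of $\phi$ forces the parity of the eigenfunctions to alternate. Odd $g$ are handled by the same oscillation argument, with $t$ as the ground state on odd functions. \textbf{We expect this $E_0$ step to be the main obstacle}, specifically ruling out crossings of the threshold along the homotopy.

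For $m\ge1$, the potential term makes $\mathcal{D}_K^{(m)}$ strictly smaller than a suitable comparison operator. We would relate $E_m$ to $E_0$ or $E_1$ by a substitution $g=(1-t^2)^{m/2}\tilde g$, or by comparing Rayleigh quotients, showing that the top eigenvalue on $E_m$ is at most that of the second level. On $E_1$ the linear functions $x_i$, $i<n$, give eigenvalue $0$. Every other eigenvalue in $E_1$ must then be at most $-1/(n-1)$; we would show this by comparing the second eigenvalue on $E_1$ with the $E_0$ result, using the explicit relation of the potentials. For $m\ge2$ the top eigenvalue is already at most $-1/(n-1)$, because the additional potential $\frac{m(m+n-3)-(n-2)}{1-t^2}$ dominates. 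Combining all cases gives the theorem.
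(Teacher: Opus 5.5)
\paragraph{Main gap: the even part of $E_0$.} The gap is the step you flag yourself: showing that every eigenvalue of $\mathcal{D}_0$ with even eigenfunction, other than $1$, is at most $-\frac{1}{n-1}$.

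Simplicity, oscillation counting and parity alternation only order eigenvalues relative to one another; they show that even and odd eigenvalues interlace. They give no absolute bound. An Alexandrov-type homotopy from the ball needs an independent reason why $-\frac{1}{n-1}$ is never an even eigenvalue of $\mathcal{D}_{K_s}$ along the path. That is the statement itself, namely the local logarithmic Brunn--Minkowski inequality for two coaxial origin symmetric bodies of revolution.

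This statement is sharp. For $K=B^{n-1}\times[-1,1]$ and $L=aB^{n-1}\times b[-1,1]$, equality holds in \eqref{loc_log_bm_ineq} with $p=0$. So the threshold is approached by smooth bodies of revolution, and the inequality carries quantitative content that no continuity or oscillation argument can supply. The ``explicit identity'' you invoke is never given.

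Your base case is also wrong. For the ball, the second even zonal eigenvalue is $\frac{n-1-2n}{n-1}=-\frac{n+1}{n-1}$, not $-\frac{1}{n-1}$.

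The paper does not prove this step intrinsically either. It imports it from the B\"or\"oczky--Kalantzopoulos theorem for bodies symmetric under $n$ independent hyperplane reflections, via the global-to-local implication. It then uses its homotopy plus Frobenius uniqueness only to pass from even to odd functions, which is the job your parity alternation would do.

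\paragraph{Secondary issues for $m\ge1$.} Your potential picture is correct: your $g$ is the paper's $\overline{g}=(1-t^2)^{m/2}\overline{f}$, and the identity is Lemma~\ref{lemma_orthogonal_space}. However, the claim that the potential ``dominates'' needs an input you do not mention.
\begin{itemize}
\item The potential has weight $\mathcal{A}_2\eta\,\mathcal{A}_1\eta^{n-3}(1-t^2)^{\frac{n-5}{2}}$, while the norm has weight $\frac{\mathcal{A}_2\eta\,\mathcal{A}_1\eta^{n-2}}{\eta}(1-t^2)^{\frac{n-3}{2}}$.
\item Comparing them requires $(1-t^2)\mathcal{A}_1\eta(t)\le\eta(t)$, which holds because $\eta$ is even; see \eqref{RK-HK_ineq}.
\end{itemize}

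With this inequality your route works, and more cheaply than the paper's.
\begin{itemize}
\item For odd $g$ in $E_1$, use $\langle L_0 g,g\rangle_{0,n}\le 0$ (the odd ground state is $t$) together with $n-2\ge 1$. The even part orthogonal to constants then follows from your parity alternation.
\item For $m\ge 2$, use $\langle L_0 g,g\rangle_{0,n}\le\langle g,g\rangle_{0,n}$ together with $m(m+n-3)\ge n$.
\end{itemize}

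For $n=2$, however, the $E_1$ potential $m(m+n-3)$ vanishes, so your comparison gives nothing on $E_1$. The paper has to invoke the planar logarithmic Brunn--Minkowski inequality for that case.
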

	
	The proof of this theorem will be the main part of the present paper. Before proceeding, we want to use Theorem \ref{spectral_gap_theorem} to prove Theorem \ref{local_log_bm_zonal}. The argument closely follows the proof of Theorem 1.4 in \cite{vanhandel}.
	
	\begin{proof}[Proof of Theorem \ref{local_log_bm_zonal}]
		
		Assume $K$ is of the class $C_2^+$ and denote by $\langle\cdot,\cdot\rangle$ the inner product of $L^2(S^{n-1},\mu_K)$. By Theorem \ref{spectral_gap_theorem} for every $f\in C^2(S^{n-1})$ that is orthogonal to $h_K$ and linear functions we have
		\begin{align}\label{operator_ineq}
			\langle \mathcal{D}_Kf,f\rangle+\frac{1}{n-1}\langle f,f\rangle \leq 0.
		\end{align}
		For any origin symmetric convex body $L$ of class $C_+^2$ we set $f=h_L-\frac{\langle h_L,h_K\rangle}{\langle h_K,h_K\rangle}h_K$. Using the above inequality together with the identities
		\begin{align*}
			\langle \mathcal{D}_Kh_L,h_L\rangle =& \ V(L,L,K[n-2]), \\
			\langle h_L,h_L\rangle =& \ \frac{1}{n}\int_{S^{n-1}}\frac{h_L^2}{h_K}dS_K, \\
			\langle h_K,h_K\rangle=& \ \vol(K), \\
			\langle \mathcal{D}_Kh_L,h_K\rangle =& \ \langle h_L,h_K\rangle = V(L,K[n-1])
		\end{align*}
		yields (\ref{loc_log_bm_ineq}). Finally for $K$ and $L$ not necessarily of class $C_+^2$, approximating $K$ by bodies of revolution of the class $C_+^2$, $L$ by convex bodies of the class $C_+^2$ satisfying (\ref{center_of_mass_condition}) and using that (\ref{loc_log_bm_ineq}) is continuous with respect to the Hausdorff topology proves the theorem in full generality.		
	\end{proof} 
	
	\section{The elliptic operator $\mathcal{D}_K$}
	
	We define the following two differential operators that will describe the eigenvalues of the hessian of a zonal function, i.e. a function that is invariant under rotations around a fixed axis. For $\eta\in C^2[-1,1]$ set
	\begin{align*}
		\mathcal{A}_1\eta(t) & =-t\eta'(t)+\eta(t),\\
		\mathcal{A}_2\eta(t) &=(1-t^2)\eta''(t)+\mathcal{A}_1\eta(t).
	\end{align*}
	
	We will often use the relation
	\begin{align*}
		\frac{d}{dt} \mathcal A_1\eta(t)=-t\eta''(t)=-\frac{t}{1-t^2} (\mathcal A_2 \eta(t)-\mathcal A_1 \eta(t))
	\end{align*}
	and hence
	\begin{align*}
		\frac{d}{dt}\sqrt{1-t^2}\mathcal{A}_1\eta(t)=-\frac{t\mathcal{A}_2\eta(t)}{\sqrt{1-t^2}}.
	\end{align*}
	
	\begin{lemma}\label{zonal_hesssian}
		For $\eta\in C^2[-1,1]$ and $h\in C^2(S^{n-1})$ given by $h(x)=\eta(x_n)$ we have
		\begin{align*}
			D^2h(x)\big|_{T_xS^{n-1}} = \eta''(x_n)v(x) v(x)^T + \mathcal{A}_1\eta(x_n)\id
		\end{align*}
		for all $x\in S^{n-1}$, where $v(x)=P_{x^\perp}e_n$ is the orthogonal projection of $e_n$ to the tangent space of $S^{n-1}$ at $x$.
	\end{lemma}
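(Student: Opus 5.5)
The plan is a direct computation of $D^2h$ on $\R^n$, using the $1$-homogeneous extension of $h$. Any extension would do, since the restriction $D^2h|_{T_xS^{n-1}}$ is independent of it once we use the Preliminaries relation for homogeneous functions. For $m$-homogeneous $f$ we have $D^2f|_{T_xS^{n-1}}=\nabla^2 f+mf\,\id$, and the spherical Hessian $\nabla^2 f$ depends only on $f|_{S^{n-1}}$. Hence $D^2f|_{T_xS^{n-1}}-mf\,\id$ is determined by the boundary values. The cleanest choice is the $1$-homogeneous extension
\begin{align*}
	h(y)=|y|\,\eta\!\left(\frac{y_n}{|y|}\right),
\end{align*}
because then $D^2h|_{T_xS^{n-1}}$ is exactly the quantity the lemma computes. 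The extension is $C^2$ away from the origin, which is all we need.

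Writing $s(y)=y_n/|y|$, we first compute the gradient:
\begin{align*}
	Dh=\eta(s)\frac{y}{|y|}+\eta'(s)\Bigl(e_n-s\frac{y}{|y|}\Bigr).
\end{align*}
At $|y|=1$ this equals $\mathcal{A}_1\eta(s)\,y+\eta'(s)e_n$.

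Next we differentiate once more and evaluate at $x\in S^{n-1}$ applied to tangent vectors $u,w\perp x$. Along such directions we have $D(y/|y|)[u]=u$ and $Ds[u]=\langle u,e_n\rangle=\langle u,v(x)\rangle$, since $u\perp x$. Applying this to the first expression for $Dh$ gives
\begin{align*}
	D^2h(x)[u,w]=\eta'\langle u,v\rangle\langle w,x\rangle+\eta\langle u,w\rangle+\eta''\langle u,v\rangle\langle e_n-sx,w\rangle-\eta'\bigl(\langle u,v\rangle\langle x,w\rangle+s\langle u,w\rangle\bigr).
\end{align*}
Because $w\perp x$, every $\langle\cdot,x\rangle$ term vanishes, and $\langle e_n-sx,w\rangle=\langle v(x),w\rangle$ since $v(x)=e_n-x_nx$. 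What remains is
\begin{align*}
	D^2h(x)[u,w]=\eta''(x_n)\langle u,v\rangle\langle v,w\rangle+\bigl(\eta(x_n)-x_n\eta'(x_n)\bigr)\langle u,w\rangle,
\end{align*}
and this is the claimed formula $\eta''(x_n)\,v(x)v(x)^T+\mathcal{A}_1\eta(x_n)\,\id$.

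There is no real obstacle. The only care needed is bookkeeping: the terms involving $x$ must be seen to drop out when we restrict to tangent vectors, and $e_n-x_nx$ must be identified with $v(x)$. At the poles $x=\pm e_n$ we have $v=0$, and the formula holds by continuity, or directly from the same computation.
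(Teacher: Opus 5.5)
Your proposal is correct and matches the paper's argument: the paper also takes the $1$-homogeneous extension $y\mapsto|y|\,\eta(y_n/|y|)$, differentiates it twice and restricts to $T_xS^{n-1}$, but only cites the computation, which you carry out explicitly and correctly. One imprecision: the claim that ``any extension would do'' is too strong, since $D^2h|_{T_xS^{n-1}}$ depends on the extension through its radial derivative, but the lemma is meant for the $1$-homogeneous extension, which is exactly the one you use, so nothing is affected.
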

	
	\begin{proof}
		Taking the second derivative of the 1-homogeneous extension of $h$, given by $x\mapsto |x|\eta\left(\frac{x_n}{|x|}\right)$, and restricting it to the tangent space of $S^{n-1}$ this is a direct computation, c.f. \cite[(5.3)]{ortegamorenoschuster}.
	\end{proof}
	
	For the rest of the paper if not stated otherwise we assume $K$ to be a body of revolution of the class $C^2_+$ with the $x_n$-axis as axis of symmetry and hence $h_K(x)=\eta(x_n)$ for some $\eta\in C^2[-1,1]$ by \cite[Theorem 2.17]{braunerhofstaetterortegamoreno}. Now observe that $|v(x)|=\sqrt{1-x_n^2}$ for $x\in S^{n-1}$. By Lemma \ref{zonal_hesssian} for $x\neq \pm e_n$ the map $D^2h_K(x)$ has the eigenvalue $\mathcal{A}_2\eta(x_n)$ with eigenspace the line spanned by $v(x)$ and the eigenvalue $\mathcal{A}_1\eta(x_n)$ with eigenspace $v(x)^\perp$, hence of multiplicity $(n-2)$. As $v(\pm e_n)=0$ we have $D^2h(\pm e_n)=\mathcal{A}_1\eta(\pm 1)\id=\mathcal{A}_2\eta(\pm 1)\id$. In particular the assumption that $K$ is of the class $C^2_+$ implies that $\mathcal{A}_2\eta(t)>0$ and $\mathcal{A}_1\eta(t)>0$ for all $t\in [-1,1]$. As, by assumption, the origin lies in the interior of $K$, we also have $\eta(t)>0$ for all $t\in[-1,1]$. 
	
	\begin{lemma}\label{lemma_explicit_mixed_discriminant}
		We have
		\begin{align*}
			d\mu_K(x)=\frac{\mathcal{A}_2\eta(x_n)\mathcal{A}_1\eta(x_n)^{n-2}}{n\eta(x_n)}d\mathcal{H}^{n-1}(x)
		\end{align*}
		as well as
		\begin{align*}
			\mathcal{D}_K f(x)=&\frac{\eta(x_n)}{n-1}\frac{\mathcal{A}_2\eta(x_n)\tr(D^2f(x))-\eta''(x_n)\langle v(x),D^2f(x)v(x)\rangle}{\mathcal{A}_2\eta(x_n)\mathcal{A}_1\eta(x_n)}
		\end{align*}
		for all $f\in C^2(S^{n-1})$.
	\end{lemma}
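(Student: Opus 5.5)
The plan is to compute everything in an orthonormal eigenbasis of $D^2h_K(x)$ given by Lemma \ref{zonal_hesssian}. Fix $x\neq\pm e_n$ and choose an orthonormal basis $u_1=v(x)/|v(x)|,u_2,\dots,u_{n-1}$ of $T_xS^{n-1}$, where $u_2,\dots,u_{n-1}$ span $v(x)^\perp$. Since $|v(x)|^2=1-x_n^2$, Lemma \ref{zonal_hesssian} says that in this basis
\begin{align*}
D^2h_K(x)=\operatorname{diag}\bigl((1-x_n^2)\eta''(x_n)+\mathcal{A}_1\eta(x_n),\mathcal{A}_1\eta(x_n),\dots,\mathcal{A}_1\eta(x_n)\bigr)=\operatorname{diag}\bigl(\mathcal{A}_2\eta,\mathcal{A}_1\eta,\dots,\mathcal{A}_1\eta\bigr).
\end{align*}
The first formula then follows at once: $\det D^2h_K=\mathcal{A}_2\eta(x_n)\mathcal{A}_1\eta(x_n)^{n-2}$. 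Inserting this into $d\mu_K=\frac{1}{nh_K}\det(D^2h_K)\,d\mathcal{H}^{n-1}$ with $h_K(x)=\eta(x_n)$ gives the claimed density. The two poles form a null set, and at the poles the formula holds anyway by continuity.

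For $\mathcal{D}_K$ I will use the standard formula for the mixed discriminant with one general entry and the other $n-2$ entries equal to a diagonal matrix $\Lambda=\operatorname{diag}(\lambda_1,\dots,\lambda_{n-1})$:
\begin{align*}
D(A,\Lambda[n-2])=\frac{1}{n-1}\sum_{i=1}^{n-1}A_{ii}\prod_{j\neq i}\lambda_j .
\end{align*}
This comes from differentiating $\det(\Lambda+sA)$ at $s=0$ and dividing by $n-1$ (multilinearity and symmetry). Dividing by $\det\Lambda$ gives $\frac{1}{n-1}\sum_i A_{ii}/\lambda_i$. Now take $A=D^2f(x)$, $\lambda_1=\mathcal{A}_2\eta$ and $\lambda_i=\mathcal{A}_1\eta$ for $i\ge2$. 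Then
\begin{align*}
\frac{D(D^2f,D^2h_K[n-2])}{\det D^2h_K}=\frac{1}{n-1}\Bigl(\frac{A_{11}}{\mathcal{A}_2\eta}+\frac{\tr A-A_{11}}{\mathcal{A}_1\eta}\Bigr)=\frac{\mathcal{A}_1\eta\,A_{11}+\mathcal{A}_2\eta(\tr A-A_{11})}{(n-1)\mathcal{A}_1\eta\,\mathcal{A}_2\eta}.
\end{align*}
The numerator equals $\mathcal{A}_2\eta\tr A-(\mathcal{A}_2\eta-\mathcal{A}_1\eta)A_{11}$. We have $\mathcal{A}_2\eta-\mathcal{A}_1\eta=(1-x_n^2)\eta''$, and $A_{11}=\langle v,Av\rangle/(1-x_n^2)$. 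So the numerator is $\mathcal{A}_2\eta\tr D^2f-\eta''\langle v,D^2f\,v\rangle$. Multiplying by $h_K=\eta(x_n)$ yields the stated expression for $\mathcal{D}_Kf$.

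The only delicate point is the poles $x=\pm e_n$, where $v=0$ and the basis above is undefined. There $D^2h_K=\mathcal{A}_1\eta(\pm1)\id$, and the general formula gives $\frac{\eta\tr D^2f}{(n-1)\mathcal{A}_1\eta}$. This matches the claimed formula, because $v=0$ and $\mathcal{A}_2\eta(\pm1)=\mathcal{A}_1\eta(\pm1)$. I expect no real obstacle beyond stating the mixed discriminant identity carefully. Note that the final formula is basis-free and continuous, so it holds everywhere.
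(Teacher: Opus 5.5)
Your proof is correct. It differs from the paper's only in how the mixed discriminant is evaluated.

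\textbf{The paper's route.} The paper stays basis-free. It inserts $D^2h_K=\eta''vv^T+\mathcal{A}_1\eta\,\id$ from Lemma \ref{zonal_hesssian} and expands $D(D^2f,D^2h_K[n-2])$ multilinearly; only terms with at most one copy of the rank-one matrix $vv^T$ survive. It then evaluates these terms with the quoted identities $D(A,\id[n-2])=\frac{1}{n-1}\tr A$ and $D(A,vv^T,\id[n-3])=\frac{1}{(n-1)(n-2)}(\tr A\,|v|^2-\langle v,Av\rangle)$. This handles the poles automatically, since there $v=0$. However, the second identity needs $n\geq 3$, so $n=2$ is checked separately.

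\textbf{Your route.} You diagonalize $D^2h_K$ in an orthonormal eigenbasis and derive the needed identity yourself from Jacobi's formula, namely $D(A,\Lambda[n-2])/\det\Lambda=\frac{1}{n-1}\sum_i A_{ii}/\lambda_i$. This is the statement $\frac{1}{n-1}\tr\big((D^2h_K)^{-1}D^2f\big)$ in disguise.
\begin{itemize}
\item This is self-contained and covers $n=2$ uniformly, because the sum over $i\geq 2$ is empty.
\item The price is that your eigenbasis, and the conversion $A_{11}=\langle v,Av\rangle/(1-x_n^2)$, break down at $\pm e_n$. You treat the poles correctly, either via $\mathcal{A}_1\eta(\pm1)=\mathcal{A}_2\eta(\pm1)$ or by continuity.
\item You tacitly use that the mixed discriminant is invariant under simultaneous orthogonal conjugation, so that you may compute in your chosen basis. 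This is immediate from $\det(Q^TMQ)=\det M$.
\end{itemize}
If you want your approach without the pole caveat, invert $\mathcal{A}_1\eta\,\id+\eta''vv^T$ by Sherman--Morrison. This gives $\frac{1}{\mathcal{A}_1\eta}\big(\id-\frac{\eta''}{\mathcal{A}_2\eta}vv^T\big)$, which yields the stated formula at every point, poles included.
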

	
	\begin{proof}
		By our knowledge of the eigenvalues of $D^2h_K(x)$ we have 
		\begin{align*}
			\det(D^2h_K(x))=\mathcal{A}_2\eta(x_n)\mathcal{A}_1\eta(x_n)^{n-2},
		\end{align*}
		which proves the first identity.
		
		For $n=2$ we directly see that $\mathcal{D}_Kf(x)=\eta(x_n)\frac{D^2f(x)}{\mathcal{A}_2\eta(x_n)}$ which coincides with the stated form. Now assume $n\geq 3$. As for any $A\in\R^{(n-1)\times(n-1)}$ and $v\in\R^{n-1}$ we have $D(A,\id[n-2])=\frac{1}{n-1}\tr(A)$ as well as $D(A,vv^T,\id[n-3])=\frac{1}{(n-1)(n-2)}(\tr(A)|v|^2-\langle v,Av\rangle)$, we get
		\begin{align*}
			&D(D^2f(x),D^2h_K(x)[n-2])\\
			&\quad=\mathcal{A}_1\eta(x_n)^{n-2}D(D^2f(x),\id[n-2])\\
			& \quad \quad + (n-2)\eta''(x_n)\mathcal{A}_1\eta(x_n)^{n-3}D(D^2f(x),v(x) v(x)^T,\id[n-3]) \\
			& \quad = \frac{1}{n-1}\mathcal{A}_2\eta(x_n)\mathcal{A}_1\eta(x_n)^{n-3}\tr(D^2f(x)) \\
			& \quad \quad -\frac{1}{n-1}\eta''(x_n)\mathcal{A}_1\eta(x_n)^{n-3}\langle v(x),D^2f(x)v(x)\rangle.
		\end{align*}
	\end{proof}
	
	For our work we need some results about products of zonal functions with spherical harmonics, which can be shown by calculation in cylindrical coordinates. Denote by $\mathcal{H}_m^n$ the space of spherical harmonics of degree $m$, that is, the space of restrictions of $m$-homogeneous harmonic polynomials to $S^{n-1}$. Similarly as for the support functions of convex bodies, we will not distinguish between a spherical harmonic and its $m$-homogeneous extension to $\R^n$.
	
	In the following we denote 
	\begin{align*}
		w_{m,n}(t)=\frac{\mathcal{A}_2\eta(t)\mathcal{A}_1\eta(t)^{n-2}}{n\eta(t)}(1-t^2)^{m+\frac{n-3}{2}}.
	\end{align*}
	
	\begin{lemma}\label{lemma_Lp}
		Let $f_1,f_2:S^{n-1} \to\R$ be given by $$f_j(x)=\sum_{i=1}^Nh_{i,j}(x_1,\dots,x_{n-1})\overline{f}_{i,j}(x_n)$$ for $j=1,2$, where for $i=1,\dots,N$ the functions $\overline{f}_{i,j}:[-1,1]\to\R$ are measurable and the functions $h_{i,j}: \R^{n-1} \to \R$ form an orthonormal basis of $\mathcal{H}^{n-1}_m$ with respect to the $L^2$-inner product on $S^{n-2}$. Then 
		\begin{align*}
			\int_{S^{n-1}} f_1f_2d\mu_K = \sum_{i=1}^N \int_{-1}^1 w_{m,n}(t)\overline{f}_{i,1}(t)\overline{f}_{i,2}(t)dt.
		\end{align*}
		In particular, $f_j\in L^2(S^{n-1})$ for $j=1,2$ if and only if
		\begin{align*}
			\overline{f}_{i,j}(t)\in L^2\left([-1,1],w_{m,n}(t)dt\right)
		\end{align*}
		for all $i=1,\dots,N$.
	\end{lemma}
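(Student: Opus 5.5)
We prove the identity by passing to cylindrical coordinates on $S^{n-1}$ and then using orthonormality of the $h_{i,j}$ on $S^{n-2}$. Write $x=(\sqrt{1-t^2}\,\xi,t)$ with $t=x_n\in(-1,1)$ and $\xi\in S^{n-2}$. This parametrizes $S^{n-1}\setminus\{\pm e_n\}$, a set of full measure. The standard coarea computation gives
\begin{align*}
	d\mathcal{H}^{n-1}(x)=(1-t^2)^{\frac{n-3}{2}}\,dt\,d\mathcal{H}^{n-2}(\xi).
\end{align*}
Combined with the first identity of Lemma \ref{lemma_explicit_mixed_discriminant}, this yields
\begin{align*}
	d\mu_K(x)=\frac{\mathcal{A}_2\eta(t)\mathcal{A}_1\eta(t)^{n-2}}{n\eta(t)}(1-t^2)^{\frac{n-3}{2}}\,dt\,d\mathcal{H}^{n-2}(\xi).
\end{align*}

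Next we use the homogeneity of the $h_{i,j}$. Since $h_{i,j}$ is $m$-homogeneous on $\R^{n-1}$,
\begin{align*}
	h_{i,j}(x_1,\dots,x_{n-1})=(1-t^2)^{\frac{m}{2}}h_{i,j}(\xi).
\end{align*}
Hence $f_1f_2$ is a double sum over $i,k$ of the terms
\begin{align*}
	(1-t^2)^m\,h_{i,1}(\xi)h_{k,2}(\xi)\,\overline f_{i,1}(t)\overline f_{k,2}(t).
\end{align*}
We integrate by Fubini, first over $\xi\in S^{n-2}$. I read the hypothesis as saying that both families $(h_{i,1})_i$ and $(h_{i,2})_i$ equal one fixed orthonormal basis of $\mathcal{H}^{n-1}_m$; this is the only reading under which the stated formula can hold. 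Orthonormality on $S^{n-2}$ then kills the terms with $i\neq k$ and gives $1$ when $i=k$. Collecting the powers of $(1-t^2)$ into $(1-t^2)^{m+\frac{n-3}{2}}$ produces exactly $w_{m,n}(t)$, and the claimed identity follows.

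The only delicate point is justifying Fubini when the $\overline f_{i,j}$ are merely measurable. We handle it by first applying the computation to $|f_j|^2$; these are nonnegative, so Tonelli applies without any integrability assumption. This gives
\begin{align*}
	\int_{S^{n-1}}f_j^2\,d\mu_K=\sum_i\int_{-1}^1 w_{m,n}\,\overline f_{i,j}^{\,2}\,dt.
\end{align*}
In this computation the cross terms vanish by orthonormality, and the diagonal terms are nonnegative.

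This identity immediately gives the \emph{in particular} statement, reading $L^2(S^{n-1})$ as $L^2(S^{n-1},\mu_K)$. The density of $\mu_K$ is bounded above and below by positive constants since $\eta,\mathcal{A}_1\eta,\mathcal{A}_2\eta>0$ are continuous on $[-1,1]$, so this is equivalent to the plain $L^2$ statement. In the $L^2$ case, Cauchy--Schwarz makes $f_1f_2$ integrable, so Fubini is legitimate for the mixed integral. Beyond this bookkeeping, the main obstacle is simply getting the Jacobian exponent $\frac{n-3}{2}$ right, which can be checked against $n=2$ (arc length $dt/\sqrt{1-t^2}$).
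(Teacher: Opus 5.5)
Your proof is correct and follows the paper's argument: the parametrization $x=(\sqrt{1-t^2}\,y,t)$ with the density from Lemma \ref{lemma_explicit_mixed_discriminant}, the $m$-homogeneity of the $h_{i,j}$, Fubini, and orthonormality on $S^{n-2}$. Your additions only make explicit what the paper leaves implicit. These are applying Tonelli to $f_j^2$ first and handling the mixed integral via Cauchy--Schwarz, and noting that both families must be the same orthonormal basis, which the paper uses tacitly when it appeals to ``the orthonormality of the $h_i$''.
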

	
	\begin{proof}
		We use Lemma \ref{lemma_explicit_mixed_discriminant} as well as the parametrization $x=(\sqrt{1-t^2}y,t)$ for $t\in[-1,1]$ and $y\in S^{n-2}$ to get
		\begin{align*}
			&\int_{S^{n-1}} f_1f_2 d\mu_K \\
			& \quad =\frac{1}{n}\int_{S^{n-1}} \frac{\mathcal{A}_2\eta(x_n)\mathcal{A}_1\eta(x_n)^{n-2}}{\eta(x_n)}f_1(x)f_2(x)d\mathcal{H}^{n-1}(x) \\
			& \quad = \frac{1}{n}\sum_{i,i'=1}^N\int_{-1}^1 \frac{\mathcal{A}_2\eta(t)\mathcal{A}_1\eta(t)^{n-2}}{\eta(t)}(1-t^2)^{\frac{n-3}{2}}\overline{f}_{i,1}(t)\overline{f}_{i',2}(t) \\
			& \hspace{5cm} \int_{S^{n-2}} h_{i,1}(\sqrt{1-t^2}y)h_{i',2}(\sqrt{1-t^2}y) d\mathcal{H}^{n-2}(y) dt \\
			& \quad  =\frac{1}{n} \ \sum_{i=1}^N\int_{-1}^1 \frac{\mathcal{A}_2\eta(t)\mathcal{A}_1\eta(t)^{n-2}}{\eta(t)}(1-t^2)^{m+\frac{n-3}{2}} \overline{f}_{i,1}(t)\overline{f}_{i,2}(t)dt
		\end{align*}
		by Fubini's theorem and the orthonormality of the $h_i$. This proves the first claim. Now by definition $f_j\in L^2 (S^{n-1})$ if and only if $\int_{S^{n-1}}f_j^2d\mu_K<\infty$ and hence the second claim follows.
	\end{proof}
	
	\begin{lemma}\label{lemma_Cinfty}
		Let $$f=\sum_{i=1}^N h_{i}(x_1,\dots,x_{n-1})\overline{f}_{i}(x_n)$$ be as in the previous lemma, $m>0$ and assume $f\in C^2(S^{n-1})$. Then for all $i=1,\dots,N$ we have $\overline{f}_i\in C^2(-1,1)$ and the boundary conditions 
		\begin{align}\label{dirichlet_boundary_conditions}
			\limsup\limits_{t\to\pm 1} (1-t^2)^{\frac{m-1}{2}}\left|\overline{f}_i(t)\right|<\infty
		\end{align}
		as well as
		\begin{align}\label{neumann_boundary_conditions}
			\limsup\limits_{t\to\pm 1} (1-t^2)^{\frac{m+1}{2}}\left|\overline{f}_i'(t)\right|<\infty.
		\end{align}
	\end{lemma}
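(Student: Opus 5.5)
The plan is to recover each coefficient $\overline{f}_i$ from $f$ by integrating over the slices $\{x_n=t\}$ against $h_i$. Everything then reduces to elementary estimates on these slice integrals.

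\textbf{Step 1: a formula for $\overline{f}_i$.} For $t\in(-1,1)$ and $y\in S^{n-2}$ write $x(t,y)=(\sqrt{1-t^2}\,y,t)\in S^{n-1}$, as in the proof of Lemma \ref{lemma_Lp}. The $h_i$ are $m$-homogeneous, so $h_i(\sqrt{1-t^2}\,y)=(1-t^2)^{m/2}h_i(y)$. Together with the orthonormality of $h_1,\dots,h_N$ in $L^2(S^{n-2})$ this gives
\begin{align*}
	g_i(t):=\int_{S^{n-2}} f(x(t,y))\,h_i(y)\,d\mathcal{H}^{n-2}(y)=(1-t^2)^{\frac m2}\,\overline{f}_i(t).
\end{align*}
The map $(t,y)\mapsto x(t,y)$ is smooth on $(-1,1)\times S^{n-2}$ and $f\in C^2(S^{n-1})$. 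Differentiating under the integral sign therefore shows $g_i\in C^2(-1,1)$, and hence $\overline{f}_i=(1-t^2)^{-m/2}g_i\in C^2(-1,1)$.

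\textbf{Step 2: the bound (\ref{dirichlet_boundary_conditions}).} The left side of (\ref{dirichlet_boundary_conditions}) is $(1-t^2)^{-1/2}|g_i(t)|$, so we need $|g_i(t)|\le C\sqrt{1-t^2}$ near $t=\pm1$. This is where $m>0$ enters, and it is the key step. Since $m\geq1$, $h_i$ is orthogonal to the constants on $S^{n-2}$, so for $t$ near $1$
\begin{align*}
	g_i(t)=\int_{S^{n-2}}\bigl(f(x(t,y))-f(e_n)\bigr)h_i(y)\,d\mathcal{H}^{n-2}(y).
\end{align*}
Now $|x(t,y)-e_n|=\sqrt{2(1-t)}\le\sqrt2\,\sqrt{1-t^2}$ and $f$ is Lipschitz on $S^{n-1}$ (it is $C^1$). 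Hence $|g_i(t)|\le C\|\nabla f\|_\infty\|h_i\|_{L^1}\sqrt{1-t^2}$. The case $t\to-1$ is identical, comparing with $f(-e_n)$ instead.

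\textbf{Step 3: the bound (\ref{neumann_boundary_conditions}).} Differentiating the formula from Step 1 gives
\begin{align*}
	(1-t^2)^{\frac{m+1}{2}}\overline{f}_i'(t)=(1-t^2)^{\frac12}g_i'(t)+mt\,(1-t^2)^{-\frac12}g_i(t).
\end{align*}
The second term is bounded by Step 2. For the first term, $g_i'(t)=\int_{S^{n-2}}\langle\nabla f(x(t,y)),\partial_t x(t,y)\rangle h_i(y)\,dy$. Since $|\partial_t x|=(1-t^2)^{-1/2}$, we get $(1-t^2)^{1/2}|g_i'(t)|\le\|\nabla f\|_\infty\|h_i\|_{L^1}$. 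This proves (\ref{neumann_boundary_conditions}).

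Only the $C^1$ regularity of $f$ is used for the boundary conditions; $C^2$ is needed only for the interior regularity $\overline{f}_i\in C^2(-1,1)$. The one genuine obstacle is the cancellation against $f(\pm e_n)$ in Step 2, which gains the factor $\sqrt{1-t^2}$ and relies on $m>0$.
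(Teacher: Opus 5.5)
Your proof is correct and follows the same route as the paper's. You write $(1-t^2)^{m/2}\overline{f}_i(t)$ through the values of $f$ on the slice $\{x_n=t\}$, then use Lipschitz continuity of $f$ near $\pm e_n$ (via $|x(t,y)\mp e_n|^2=2(1\mp t)$) to gain the factor $\sqrt{1-t^2}$, and finally differentiate along meridians, where your $\partial_t x=(1-t^2)^{-1/2}X$ is just the paper's bounded field $X$ rescaled. The differences are minor: you extract the coefficients by integrating against the orthonormal $h_i$ rather than by evaluating at points $y_1,\dots,y_N$ with $(h_i(y_j))$ invertible, and you get the cancellation directly from $\int_{S^{n-2}}h_i=0$ instead of first proving $f(\pm e_n)=0$ via a zero of the harmonic $\sum_i a_i^\pm h_i$.
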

	
	\begin{remark}\label{remark_Cinfty}
		For $m=0$, any homogeneous polynomial of degree $m$ is constant. In this case, a function $f:S^{n-1}\to \R$ given by $f(x)=\overline{f}(x_n)$ is $C^k$ for some $k\in\N_0\cup\{\infty\}$ if and only if $\overline{f}\in C^k[-1,1]$. A proof of this fact for $k=\infty$ can be found in \cite[Theorem 2.17]{braunerhofstaetterortegamoreno}, for $k<\infty$ it is analogous.
	\end{remark}
	
	\begin{proof}[Proof of Lemma \ref{lemma_Cinfty}]
		As the $h_i$ are linearly independent, there exist $y_1,\dots,y_N\in S^{n-2}$ such that the matrix $(h_i(y_j))_{i,j=1}^N$ is invertible. Now by definition we have
		\begin{align}\label{linear_system_dirichlet_bc}
			f(\sqrt{1-t^2}y_j,t)=\sum_{i=1}^N h_i(\sqrt{1-t^2}y_j)\overline{f}_i(t)=(1-t^2)^{\frac{m}{2}}\sum_{i=1}^N h_i(y_j)\overline{f}_i(t).
		\end{align}
		We multiply with the inverse $((h_i(y_j))_{i,j=1}^N)^{-1}=(\tilde{h}_{ji})_{i,j=1}^N$ to get 
		\begin{align*}
			\sum_{j=1}^N f(\sqrt{1-t^2}y_j,t) \tilde{h}_{ji}=(1-t^2)^{\frac{m}{2}} \overline{f}_i(t),
		\end{align*}
		which shows that $\overline{f}_i \in C^2(-1,1)$. Taking the limit $t\to\pm 1$ we get
		\begin{align*}
			f(\pm e_n)\sum_{j=1}^N \tilde{h}_{ji}=\lim\limits_{t\to\pm 1} \sum_{j=1}^N \tilde{h}_{ji}f(\sqrt{1-t^2}y_j,t)=\lim\limits_{t\to\pm 1}(1-t^2)^{\frac{m}{2}}\overline{f}_i(t).
		\end{align*}
		We now show that $f(\pm e_n)=0$. To this end denote $a_i^\pm=\lim\limits_{t\to\pm 1}(1-t^2)^{\frac{m}{2}}\overline{f}_i(t)$ and observe that $h=\sum_{i=1}^Na_i^\pm h_i$ is again a spherical harmonic of degree $m$. By the assumption $m>0$ we have that $h$ is orthogonal to constants and hence there exists $y\in S^{n-2}$ such that $h(y)=0$. Then with (\ref{linear_system_dirichlet_bc}) we get
		\begin{align*}
			f(\pm e_n)=\lim\limits_{t\to\pm 1} f(\sqrt{1-t^2}y,t)=\sum_{i=1}^Na_i^\pm h_i(y)=h(y)=0.
		\end{align*}
		Finally by the assumption, $f$ is Lipschitz continuous. Now assume $t\geq 0$. For any $y\in S^{n-2}$ we have
		\begin{align*}
			|(\sqrt{1-t^2}y,t)- e_n|^2=2(1-t)\leq 2(1-t^2).
		\end{align*}
		With that we get
		\begin{align*}
			(1-t^2)^{\frac{m-1}{2}}|\overline{f}_i(t)|=& \ \frac{\sum_{j=1}^N \left(f(\sqrt{1-t^2}y_j,t)-f(e_n)\right) \tilde{h}_{ji}}{\sqrt{1-t^2}} \\
			\leq & \  2\sum_{j=1}^N \frac{\left|f(\sqrt{1-t^2}y_j,t)-f(e_n)\right|}{|(\sqrt{1-t^2}y_j,t)- e_n|}\tilde{h}_{ji}.
		\end{align*}
		By the Lipschitz continuity the right hand side is bounded and hence $t\to 1$ shows (\ref{dirichlet_boundary_conditions}) at $1$. At $-1$ the argument is the same.
		
		For the derivative we consider the $0$-homogeneous extension of $f$ given by
		\begin{align*}
			f(x)=\sum_{i=1}^N h_i\left(\frac{(x_1,\dots,x_{n-1})}{|x|}\right)\overline{f}_i\left(\frac{x_n}{|x|}\right)
		\end{align*}
		and calculate that for $x\in S^{n-1}$
		\begin{align*}
			Df(x)=&\sum_{i=1}^N h_i(x_1,\dots,x_{n-1})\overline{f}'_i(x_n)(e_n-x_nx)+\overline{f}_i(x_n)(\id-xx^T)Dh_i(x_1,\dots,x_{n-1}) \\
			=& \sum_{i=1}^N h_i(x_1,\dots,x_{n-1})\left(\overline{f}'_i(x_n)(e_n-x_nx)-m\overline{f}_i(x_n)x\right) + \overline{f}_i(x_n)Dh_i(x_1,\dots,x_{n-1}).
		\end{align*}
		Now consider the continuous vector field $X$ on $S^{n-1}\setminus\{\pm e_n\}$ given by $X_x=(1-x_n^2)^{-\frac{1}{2}}P_{x^\perp}e_n$. As $|P_{x^\perp}e_n|=\sqrt{1-x_n^2}$, the vector field $X$ is bounded around $\pm e_n$. Now
		\begin{align*}
			Df(\sqrt{1-t^2}y,t)X_{(\sqrt{1-t^2}y,t)}=&(1-t^2)^{\frac{m-1}{2}}\sum_{i=1}^N h_i(y)\left((1-t^2) \overline{f}'_i(t)-mt\overline{f}_i(t)\right).
		\end{align*}
		As $\limsup\limits_{x\to \pm e_n}|Df(x)X_x|<\infty$, applying the same method as above and using (\ref{dirichlet_boundary_conditions}) gives (\ref{neumann_boundary_conditions}).
	\end{proof}
	
	We now briefly recall the decomposition of spherical functions into irreducible $\SO(n)$ representations. This decomposition was widely used in convex geometry, see \cite[Appendix]{schneider} for an introduction as well as an overview of applications in convex geometry and further literature. For $f:S^{n-1}\to \R$ the action of $\SO(n)$ on $f$ is given by $(gf)(x)=f(g^{-1}x)$. It is well known that
	\begin{align*}
		L^2(S^{n-1})=\bigoplus_{m=0}^\infty \mathcal{H}_m^n,
	\end{align*}
	where the sum is orthogonal. For $n\geq3$, each $\mathcal{H}_m^n$ is an irreducible $\SO(n)$ representation with highest weight $(m,0,\dots,0)$, for $n=2$, the space $\mathcal{H}^n_m$ splits into two one dimensional $\SO(n)$ representations with weights $(m,0,\dots,0)$ and $(0,\dots,0,-m)$. For $n=1$, the only homogeneous harmonic polynomials on $\R$ are constants and linear functions. Hence $\mathcal{H}_m^1$ is only nontrivial for $m=0,1$ and one dimensional in these cases. Now define
	\begin{align*}
		E_m=\mathcal{H}_m^{n-1}\otimes L^2\left([-1,1],w_{m,n}(t)dt\right).
	\end{align*}
	We equip $E_m$ with the inner product given by
	\begin{align*}
		\langle h_1\otimes \overline{f}_1,h_2\otimes \overline{f}_2\rangle = \langle h_1,h_2\rangle_{L^2(S^{n-2})} \langle \overline{f}_1,\overline{f}_2\rangle_{m,n},
	\end{align*}
	where $\langle\cdot,\cdot\rangle_{m,n}$ denotes the $L^2$-inner product on $[-1,1]$ with respect to the measure $w_{m,n}(t)dt$. For $h\in \mathcal{H}_m^{n-1}$ and $\overline{f}\in L^2\left([-1,1],w_{m,n}(t)dt\right)$ we denote 
	\begin{align*}
		(h\otimes \overline{f})(x)=h(x_1,\dots,x_{n-1})\overline{f}(x_n)
	\end{align*}
	for $x\in S^{n-1}$. Extending this by linearity together with Lemma \ref{lemma_Lp} yields an isometric embedding $E_m\hookrightarrow L^2(S^{n-1},\mu_K)$. By the orthogonality of the $\mathcal{H}_m^{n-1}$, the images of the $E_m$ are pairwise orthogonal for $m\in \N_0$. We claim that
	\begin{align}\label{direct_sum_L2}
		L^2(S^{n-1},\mu_K)=\bigoplus_{m=0}^\infty E_m,
	\end{align}
	where the sum denotes the orthogonal sum of Hilbert spaces. As the right hand side is clearly closed and contained in the left hand side, it is sufficient to show that it is dense. To show this consider the subset of $C(S^{n-1})$ given by finite linear combinations of functions of the form $x\mapsto h(x_1,\dots,x_{n-1})\overline{f}(x_n)$, where $h\in \mathcal{H}^{n-1}_m$ for some $m\in\N_0$ and $\overline{f}\in C[-1,1]$. By the Stone-Weierstraß theorem this subset is dense in $C(S^{n-1})$. As $C(S^{n-1})$ is dense in $L^2(S^{n-1},\mu_K)$, the claim is proven.
	
	For $n>3$, each $E_m$ is $\SO(n-1)$ isotypical with highest weight $(m,0,\dots,0)$. For a suitable ordering of the roots, the highest weight vectors are precisely the functions of the form $x\mapsto z_{\overline{1}}^m\overline{f}(x_n)$, where $z_{\overline{1}}=x_1-\sqrt{-1}x_2$. For $n=3$, $E_m$ is a sum of isotypical components with weights $(m,0)$ and $(0,-m)$ if $m>0$. We refer the reader to the books by Bröcker and Tom Dieck \cite{broeckertomdieck} and Fulton and Harris \cite{fultonharris} for further material on this topic. To treat the action of $\mathcal{D}_K$ on the decomposition (\ref{direct_sum_L2}), we consider the orthogonal projections $\pi_m$ onto $E_m$. It is well known that if $n\neq 3$ and $f\in L^2(S^{n-1},\mu_K)$ then
	\begin{align*}
		\pi_mf=d_m\int_{\SO(n-1)}\overline{\chi_m(g)}gf dg,
	\end{align*}
	where $d_m$ is the dimension of the irreducible $\SO(n-1)$ representation with weight $(m,0,\dots,0)$, $\chi_m$ is the character of the respective irreducible representation and $dg$ denotes the Haar measure on $\SO(n-1)$. For $n=3$, the projection $\pi_m$ is the sum of two projections of the above form. Clearly this implies that $\pi_m$ is continuous. Furthermore the restriction $\pi_m:C^k(S^{n-1})\to E_m\cap C^k(S^{n-1})$ is well defined and continuous in the $C^k$ topology.
	
	As $\mathcal{D}_K$ is uniformly elliptic and symmetric, its self-adjoint extension is defined on the Sobolev space $H^2(S^{n-1},\mu_K)$ i.e. the space of functions in $L^2(S^{n-1},\mu_K)$ that have weak derivatives up to second order in $L^2(S^{n-1},\mu_K)$, which contains $C^2$ functions as a dense subset. The $\SO(n-1)$-invariance of $h_K$ implies that $\mathcal{D}_K$ is $\SO(n-1)$-equivariant and hence it restricts to a sequence of self adjoint operators on $\mathcal{D}_m=\mathcal{D}_K|_{E_m}$. For any $f\in H^2(S^{n-1},\mu_K)$ we have 
	\begin{align*}
		\pi_m \mathcal{D}_Kf= \mathcal{D}_m(\pi_mf),
	\end{align*}
	as the above description of $\pi_m$ implies that it is continuous in the $H^2$ topology. This implies that any $\lambda\in\R$ is an eigenvalue of $\mathcal{D}_K$ if and only if it is an eigenvalue of at least one of the $\mathcal{D}_m$. Therefore we will in the following restrict our attention to the spaces $E_m$ and the operators $\mathcal{D}_m$. The following lemma gives an explicit description of the $\mathcal{D}_m$.
	
	\begin{lemma}\label{lemma_explicit_op}
		Let $\overline{f}\in C^2(-1,1)$ and $h\in \mathcal{H}_m^{n-1}$ for some $m\in\N_0$. Then on $S^{n-1}\setminus\{\pm e_n\}$ we have $\mathcal{D}_m(h\otimes\overline{f})=h\otimes L_m\overline{f}$, where
		\begin{align*}
			L_m\overline{f}(t)=& \  \frac{\eta(t)}{(n-1)\mathcal{A}_2\eta(t)\mathcal{A}_1\eta(t)^{n-2}(1-t^2)^{m+\frac{n-3}{2}}}\frac{d}{dt}\mathcal{A}_1\eta(t)^{n-2}(1-t^2)^{m+\frac{n-1}{2}}\overline{f}'(t)\\
			& \quad + \frac{\eta(t)}{(n-1)\mathcal{A}_2\eta(t)\mathcal{A}_1\eta(t)}k_m(t)\overline{f}(t)
		\end{align*}
		for $t\in(-1,1)$ and
		\begin{align*}
			k_m(t)=-(m-1)(n-2)\mathcal{A}_2\eta(t)-(m^2-1)\mathcal{A}_1\eta(t)-m(m-1)\eta''(t).
		\end{align*}
	\end{lemma}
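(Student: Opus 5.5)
We compute $\mathcal{D}_K(h\otimes\overline{f})$ directly from Lemma \ref{lemma_explicit_mixed_discriminant}. That lemma expresses $\mathcal{D}_Kf$ in terms of two quantities at each point: $\tr(D^2f(x))$ and $\langle v(x),D^2f(x)v(x)\rangle$. Since $D^2f|_{T_xS^{n-1}}=\nabla^2f+f\,\id$ for the $1$-homogeneous extension, we get $\tr D^2f=\Delta_S f+(n-1)f$, and the second term is the second covariant derivative of $f$ in direction $v$ plus $|v|^2f$. So the plan is to evaluate $\Delta_S(h\otimes \overline{f})$ and $\nabla^2(h\otimes\overline{f})(v,v)$ in the cylindrical coordinates $x=(\sqrt{1-t^2}\,y,t)$ with $y\in S^{n-2}$, on $S^{n-1}\setminus\{\pm e_n\}$.

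\textbf{Step 1: the Laplacian.} The sphere $S^{n-1}$ minus the poles is a warped product $(-\pi/2,\pi/2)\times_{\cos\theta}S^{n-2}$ with $t=\sin\theta$. We write $h\otimes\overline{f}$ as $(1-t^2)^{m/2}h(y)\overline{f}(t)$, using the $m$-homogeneity of $h$. We use $\Delta_{S^{n-2}}h=-m(m+n-3)h$, and in the $t$-variable the standard formula
\begin{align*}
\Delta_S g=(1-t^2)^{-\frac{n-3}{2}}\frac{d}{dt}\Big((1-t^2)^{\frac{n-1}{2}}\frac{dg}{dt}\Big)+\frac{1}{1-t^2}\Delta_{S^{n-2}}g.
\end{align*}
Inserting $g=(1-t^2)^{m/2}\overline{f}(t)h(y)$ and simplifying should give
\begin{align*}
(1-t^2)^{-\frac{m}{2}}\Delta_S g=(1-t^2)^{-m-\frac{n-3}{2}}\big((1-t^2)^{m+\frac{n-1}{2}}\overline{f}'\big)'-m(m+n-2)\overline{f},
\end{align*}
multiplied by $h(y)$. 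The factor $(1-t^2)^{m/2}$ is harmless because $h$ is evaluated at $\sqrt{1-t^2}\,y$. This is a routine computation, which we also cross-check on the known zonal case $m=0$.

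\textbf{Step 2: the $vv$-component.} The vector $v(x)=P_{x^\perp}e_n$ is $\sqrt{1-t^2}$ times the unit meridian vector $\partial_\theta$. Meridians are geodesics, so $\nabla^2g(\partial_\theta,\partial_\theta)=\partial_\theta^2 g$. Writing $\partial_\theta=\sqrt{1-t^2}\,\partial_t$ gives
\begin{align*}
\langle v,D^2g\,v\rangle=(1-t^2)\big[(1-t^2)\partial_t^2g-t\partial_tg+g\big].
\end{align*}
This is again $h(y)$ times an ODE expression in $\overline{f}$, obtained by expanding $\partial_t$ of $(1-t^2)^{m/2}\overline{f}$.

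\textbf{Step 3: assembly, and the main obstacle.} We plug Steps 1 and 2 into Lemma \ref{lemma_explicit_mixed_discriminant}, divide by $(1-t^2)^{m/2}h$, and use $\mathcal{A}_2\eta-\mathcal{A}_1\eta=(1-t^2)\eta''$ together with $(\mathcal{A}_1\eta)'=-t\eta''$. The second-order terms combine to $\frac{\eta}{(n-1)\mathcal{A}_2\eta\mathcal{A}_1\eta}(1-t^2)\mathcal{A}_1\eta\,\overline{f}''$. This matches the expansion of the claimed divergence term, since $\mathcal{A}_2\eta(1-t^2)-\eta''(1-t^2)^2=(1-t^2)\mathcal{A}_1\eta$. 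The first-order terms must then reproduce the derivative falling on $\mathcal{A}_1\eta^{n-2}(1-t^2)^{m+\frac{n-1}{2}}$, which produces $(n-2)(-t\eta'')/\mathcal{A}_1\eta$ and $-(2m+n-1)t/(1-t^2)$ contributions.

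The main obstacle is the bookkeeping of the zeroth-order terms, which must collapse to exactly $k_m$. There are contributions $(n-1)\mathcal{A}_2\eta$ from the trace, $-m(m+n-2)\mathcal{A}_2\eta$ from the Laplacian, and $m$-dependent terms from differentiating $(1-t^2)^{m/2}$ inside the $vv$-part. To control this, we will first check $m=0$ and $m=1$: for $m=1$, $k_1=0$, consistent with linear functions $x_i$ ($i<n$) lying in the kernel. Only after these checks do we verify the general polynomial identity in $m$. We carry out everything for a fixed basis element $h$ and extend by linearity.
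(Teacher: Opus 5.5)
Your proposal is correct, and it takes a genuinely different route from the paper.

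\textbf{The paper's route.} It argues extrinsically. It writes the $1$-homogeneous extension of $h\otimes\overline{f}$ as $\tilde h\tilde f$, where $\tilde h$ is the $0$-homogeneous extension of $h$ and $\tilde f$ is the $1$-homogeneous extension of $x\mapsto\overline{f}(x_n)$. It then expands $D^2(\tilde h\tilde f)$ by the product rule and computes the trace and the $vv$-component of each of the four terms separately. For this it uses Euler's identity, harmonicity of $h$ on $\R^{n-1}$, $D^2h\,e_n=0$, and Lemma \ref{zonal_hesssian} for the zonal factor.

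\textbf{Your route.} You work intrinsically on $S^{n-1}\setminus\{\pm e_n\}$. You use the warped-product Laplacian, the eigenvalue $-m(m+n-3)$ of $h$ on $S^{n-2}$, and the fact that $v=\nabla x_n=\sqrt{1-t^2}\,\partial_\theta$ along geodesic meridians. Your intermediate formulas check out:
\begin{itemize}
\item The conjugated Laplacian identity in Step 1 holds exactly as written.
\item After dividing by $h(x_1,\dots,x_{n-1})=(1-t^2)^{m/2}h(y)$, your $vv$-formula reproduces the paper's expression $(1-t^2)^2\overline{f}''-(2m+1)t(1-t^2)\overline{f}'+\big((1-t^2)+m(mt^2-1)\big)\overline{f}$.
\end{itemize}

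\textbf{The zeroth-order bookkeeping.} The only rewriting needed is $m^2t^2-m=m(m-1)-m^2(1-t^2)$, followed by $(1-t^2)\eta''=\mathcal{A}_2\eta-\mathcal{A}_1\eta$. This collapses the total coefficient $\mathcal{A}_2\eta\,(n-1-m(m+n-2))-\eta''\big((1-t^2)+m^2t^2-m\big)$ directly to $k_m$. The preliminary checks at $m=0,1$ are therefore optional. No basis or linearity step is needed either, since you only use $\Delta_{S^{n-2}}h=-m(m+n-3)h$, which holds for every $h\in\mathcal{H}^{n-1}_m$.

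\textbf{What each route buys.} Yours gives transparency: the principal part arrives already in divergence form with weight $(1-t^2)^{m+\frac{n-1}{2}}$, which explains where the Jacobi-type weight $w_{m,n}$ comes from. The paper's route avoids Riemannian coordinates on the sphere altogether and reuses Lemma \ref{zonal_hesssian}.
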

	
	\begin{remark}
		As $\mathcal{D}_K$ is symmetric on $L^2(S^{n-1},\mu_K)$, $L_m$ must extend to a symmetric operator on the space $L^2\left([-1,1],w_{m,n}(t)dt\right)$ with a suitable domain. This can also be seen directly from the divergence form of $L_m$ given above.
	\end{remark}
	
	\begin{proof}
		By definition, $h$ can be seen as an $m$ homogeneous polynomial on $\R^n$. Denote by $\tilde{h}$ the $0$-homogeneous extension of $h|_{S^{n-1}}$. Denote by $\tilde{f}:\R^n\to\R$ the $1$-homogeneous extension of the function $S^{n-1}\to\R,x\mapsto \overline{f}(x_n)$. Then $f:\R^n\to\R$ given by $f(x)=\tilde{h}(x)\tilde{f}(x)$ is the 1-homogeneous extension of $h\otimes \overline{f}$. Furthermore
		\begin{align*}
			D^2f=& \ \tilde{f}D^2\tilde{h}+ D\tilde{f}(D\tilde{h})^T+D\tilde{h}(D\tilde{f})^T+\tilde{h}D^2\tilde{f}.
		\end{align*}
		By homogeneity for $x\in S^{n-1}$ we have 
		\begin{align*}
			D^2\tilde{h}(x)\big|_{T_xS^{n-1}}=D^2h(x)|_{T_xS^{n-1}}-mh(x)\id.
		\end{align*}
		With $T_xS^{n-1}=X^\perp$ and the assumption that $h$ is harmonic this yields 
		\begin{align}\label{eq_1}
			\tr D^2\tilde{h}\big|_{T_xS^{n-1}}=& \ \tr D^2h(x) -\langle x,D^2h(x)x\rangle - m(n-1)h(x) \nonumber \\
			=& -m(n+m-2)h(x).
		\end{align}
		With $v(x)=e_n-x_nx$ we get
		\begin{align}\label{eq_2}
			\left\langle v(x),D^2\tilde{h}(x)v(x)\right\rangle =& \ x_n^2\langle x,D^2h(x)x\rangle - m(1-x_n^2)h(x) \nonumber \\
			=& \ m(mx_n^2-1)h(x),
		\end{align}
		where we used that $D^2h(x)e_n=0$ as $h$ only depends on the first $(n-1)$ coordinates. 
		
		Furthermore for $x\in S^{n-1}\setminus\{\pm e_n\}$ we have
		\begin{align*}
			D\tilde{f}(x) D\tilde{h}(x)^T= \overline{f}'(x_n)(e_n-x_nx) (Dh(x)-mh(x)x)^T
		\end{align*}
		and therefore
		\begin{align}\label{eq_3}
			\tr \left(D\tilde{f}(x) D\tilde{h}(x)^T\big|_{T_xS^{n-1}}\right) & =\overline{f}'(x_n)\langle e_n-x_nx,Dh(x)-mh(x)x\rangle \nonumber\\
			& = -mx_n\overline{f}'(x_n)h(x)
		\end{align}
		as well as
		\begin{align}\label{eq_4}
			\left\langle v(x), D\tilde{f}(x) D\tilde{h}(x)^Tv(x)\right\rangle = -m(1-x_n^2)x_n\overline{f}'(x_n)h(x).
		\end{align}
		
		By Lemma \ref{zonal_hesssian} 
		\begin{align}\label{eq_5}
			\tr \left(D^2\tilde{f}(x)\big|_{T_xS^{n-1}}\right)=& \  (1-x_n^2)\overline{f}''(x_n)+(n-1)\mathcal{A}_1\overline{f}(x_n)
		\end{align}
		as well as
		\begin{align}\label{eq_6}
			\left\langle v(x),D^2\tilde{f}(x)v(x)\right\rangle =& \ (1-x_n^2)^2\overline{f}''(x_n)+(1-x_n^2)\mathcal{A}_1\overline{f}(x_n).
		\end{align}
		
		Now putting together (\ref{eq_1}), (\ref{eq_3}) and (\ref{eq_5}) we get for $x\in S^{n-1}$
		\begin{align*}
			\tr D^2f(x)|_{T_xS^{n-1}}=& \ h(x)\big(-m(n+m)\overline{f}(x_n) +(1-x_n^2)\overline{f}''(x_n)\\
			& \quad +(n+2m-1)\mathcal{A}_1\overline{f}(x_n)\big).
		\end{align*}
		The identities (\ref{eq_2}), (\ref{eq_4}) and (\ref{eq_6}) readily yield
		\begin{align*}
			\left\langle v(x), D^2f(x)v(x)\right\rangle =& \ h(x)\big(m(mx_n^2-1)\overline{f}(x_n)-2m(1-x_n^2)x_n\overline{f}'(x_n) \\
			& \quad +(1-x_n^2)^2\overline{f}''(x_n)+(1-x_n^2)\mathcal{A}_1\overline{f}(x_n)\big).
		\end{align*}
		Together with Lemma \ref{lemma_explicit_mixed_discriminant} this gives the statement.
	\end{proof}
	
	\section{Proof of the spectral gap}
	
	In order to prove Theorem \ref{spectral_gap_theorem}, we will prove that for all $m\in \N_0$ the operator $\mathcal{D}_m$ has no eigenvalue in the interval $\left(-\frac{1}{n-1},1\right)$ with an eigenfunction that is orthogonal to linear functions. This precisely means that $L_m$ has no eigenvalue in the interval $\left(-\frac{1}{n-1},1\right)$ with an eigenfunction $\overline{f}$ such that $\overline{f}$ is orthogonal to the linear functions on $[-1,1]$ if $m=0$ and orthogonal to constant functions if $m=1$. Accordingly, we will treat the cases $m=0$, $m=1$ and $m\geq 2$ separately. Theorem \ref{theorem_m=0} and Theorem \ref{theorem_m>0} together will complete the proof of Theorem \ref{spectral_gap_theorem}.
	
	\subsection{The operator on $E_0$}\label{section_m=0}
	
	This case will be the foundation of our following work. Essentially in the origin symmetric case, it is already covered by \cite{boeroeczkykalantzopoulos} as it corresponds to the local logarithmic Brunn-Minkowski inequality for two origin symmetric bodies of revolution. Obviously two origin symmetric bodies of revolution are symmetric under reflections along the coordinate hyperplanes and hence the setting of the respective paper applies. As $K$ is assumed to be origin symmetric, the operator $\mathcal{D}_K$ is equivariant under reflection around the origin and hence every eigenfunction of $\mathcal{D}_K$ can be assumed to have a well defined parity. The following theorem is equivalent to the local logarithmic Brunn-Minkowski inequality for two origin symmetric convex bodies of revolution.
	
	\begin{theorem}\label{theorem_eigenvalues_m=0}
		For all even $f\in E_0\cap C^2(S^{n-1})$ with $\langle f,h_K\rangle=0$ we have
		\begin{align*}
			\langle \mathcal{D}_0f,f\rangle+\frac{1}{n-1}\langle f,f\rangle \leq 0.
		\end{align*}
		Stated otherwise, the greatest eigenvalue of $\mathcal{D}_0$ with even eigenfunction is $1$ with eigenfunction $h_K$ and any other eigenvalue with even eigenfunction is at most $-\frac{1}{n-1}$.
	\end{theorem}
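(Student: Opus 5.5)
Our plan is to reduce the statement to a Sturm--Liouville eigenvalue problem for $L_0$ on even functions in $L^2([-1,1],w_{0,n}(t)dt)$, and then to run a continuity (homotopy) argument in the body $K$, in the spirit of Alexandrov's second proof of the Alexandrov--Fenchel inequality. By Lemma \ref{lemma_explicit_op} with $m=0$ we have
\begin{align*}
	L_0\overline{f}=\frac{\eta}{(n-1)\mathcal{A}_2\eta\,\mathcal{A}_1\eta^{n-2}(1-t^2)^{\frac{n-3}{2}}}\frac{d}{dt}\Big(\mathcal{A}_1\eta^{n-2}(1-t^2)^{\frac{n-1}{2}}\overline{f}'\Big)+\frac{\eta\big((n-2)\mathcal{A}_2\eta+\mathcal{A}_1\eta\big)}{(n-1)\mathcal{A}_2\eta\,\mathcal{A}_1\eta}\overline{f},
\end{align*}
which is a regular-in-the-interior, limit-circle type operator at $\pm1$. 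By Remark \ref{remark_Cinfty}, the $C^2$ zonal functions are exactly $C^2[-1,1]$ functions of $t$, which fixes the boundary behaviour. Eigenvalues are simple, and the $k$-th eigenfunction has exactly $k$ zeros in $(-1,1)$ by Sturm oscillation. Since $\eta$ is even, $L_0$ commutes with $t\mapsto -t$, so even eigenfunctions are exactly the ones with an even number of zeros. Thus the claim is equivalent to the following: the eigenvalue $\lambda_2(K)$ of $L_0$, whose eigenfunction has two (symmetric) zeros, satisfies $\lambda_2(K)\le -\frac{1}{n-1}$. Here $\lambda_0=1$ with eigenfunction $\eta$, and $\lambda_1=0$ with eigenfunction $t$.

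For the base case we take $K=B^n$, so $\eta\equiv1$ and $\mathcal{A}_1\eta=\mathcal{A}_2\eta=1$. Then $L_0=\frac{1}{n-1}(\Delta_{S^{n-1}}+n-1)$ restricted to zonal functions, and the even zonal eigenfunction $P_2$ (Gegenbauer) gives $\lambda_2=\frac{1}{n-1}(-2n+n-1)=-\frac{n+1}{n-1}<-\frac{1}{n-1}$.

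Next we connect $K$ to $B^n$ through origin symmetric $C^2_+$ bodies of revolution $K_s$, $s\in[0,1]$. We take $\eta_s=(1-s)+s\eta$; positivity of $\mathcal{A}_1,\mathcal{A}_2$ is linear and hence preserved. The eigenvalues $\lambda_2(K_s)$ depend continuously on $s$, since the coefficients depend continuously on $s$ and the eigenvalue is simple. So it suffices to show that $\lambda_2(K_s)$ never equals $-\frac{1}{n-1}$, or more modestly never enters $(-\frac{1}{n-1},0)$ from below. It cannot cross $0$, because an even eigenfunction for $0$ would contradict simplicity and the oscillation count ($0$ is attained by $t$, which has one zero).

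The main obstacle is excluding an even eigenfunction $\overline{f}$ with eigenvalue exactly $\lambda=-\frac{1}{n-1}$. We would first rewrite the eigen-equation using the relations $\frac{d}{dt}\mathcal{A}_1\overline{f}=-t\overline{f}''$ and $\frac{d}{dt}\sqrt{1-t^2}\mathcal{A}_1\eta=-t\mathcal{A}_2\eta/\sqrt{1-t^2}$. The goal is a first-order substitution, for instance $g=\mathcal{A}_1\overline{f}/\mathcal{A}_1\eta$ or the Wronskian-type quantity $W=\eta\overline{f}'-\eta'\overline{f}$ paired against $\eta$ and $t$. With this substitution, $\lambda=-\frac{1}{n-1}$ should turn the equation into an exact derivative identity, something like $\frac{d}{dt}\big(\mathcal{A}_1\eta^{n-2}(1-t^2)^{\frac{n-1}{2}}W\big)=c\,(\ldots)\overline{f}$, with a sign-definite remainder. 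Integrating this identity from $0$ to $1$ would then yield a contradiction with the facts that $\overline{f}$ has exactly one zero on $(0,1)$ and is orthogonal to $\eta$. The value $-\frac{1}{n-1}$ is exactly the one for which the zeroth-order coefficient $\eta((n-2)\mathcal{A}_2\eta+\mathcal{A}_1\eta)/((n-1)\mathcal{A}_2\eta\mathcal{A}_1\eta)+\frac{1}{n-1}$ simplifies, combining with $\eta=\mathcal{A}_1\eta+t\eta'$. I would first test this with $g=\overline{f}/\eta$ and look for a monotone quantity on $(0,1)$. If that fails, I would use a Picone-type comparison between $\overline{f}$ and the explicit even function $\eta-c\,\mathcal{A}_1\eta$, which at the ball is the $P_2$ mode.
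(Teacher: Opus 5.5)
\textbf{The decisive step is missing.} Several parts of your proposal are fine:
\begin{itemize}
\item the reduction of the even case to the Sturm--Liouville problem for $L_0$;
\item the oscillation and parity bookkeeping ($\lambda_0=1$ with eigenfunction $\eta$, $\lambda_1=0$ with eigenfunction $t$, even eigenfunctions corresponding to even index);
\item the base case $\lambda_2(B^n)=-\frac{n+1}{n-1}$;
\item continuity of $\lambda_2$ along $\eta_s=1-s+s\eta$.
\end{itemize}
But the proof stops exactly where the content of the theorem lies. You never exclude an even eigenfunction with eigenvalue exactly $-\frac{1}{n-1}$ at an intermediate $s$. The ``exact derivative identity with sign-definite remainder'' is only hoped for. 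At $\lambda=-\frac{1}{n-1}$ the zeroth-order coefficient becomes $\frac{\eta((n-2)\mathcal{A}_2\eta+\mathcal{A}_1\eta)+\mathcal{A}_2\eta\,\mathcal{A}_1\eta}{(n-1)\mathcal{A}_2\eta\,\mathcal{A}_1\eta}$, which does not collapse in any evident way. The Picone fallback is not specified either.

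This is not a routine gap. Since $\lambda_2(B^n)<-\frac{1}{n-1}$, your scheme needs $-\frac{1}{n-1}$ never to be an even eigenvalue for any smooth origin symmetric body of revolution. That is the strict inequality $\lambda_2(K)<-\frac{1}{n-1}$ for all such $K$. It is stronger than the theorem, and it is exactly the kind of improved bound on $\mathcal{D}_0$ that the paper leaves open at the end of Section~\ref{section_aux_results}.

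There is also no slack to exploit. Take the cylinder $K=B^{n-1}\times[-1,1]$ and $L=[-e_n,e_n]$, with $\omega_{n-1}=\vol(B^{n-1})$. Then $V(L,K[n-1])=\frac{2\omega_{n-1}}{n}$, $\vol(K)=2\omega_{n-1}$, $V(L,L,K[n-2])=0$ and $\int_{S^{n-1}}\frac{h_L^2}{h_K}dS_K=2\omega_{n-1}$. So (\ref{loc_log_bm_ineq}) with $p=0$ is an equality. Smoothing the cylinder gives smooth bodies of revolution with $\lambda_2\to-\frac{1}{n-1}$. Any identity of the kind you seek must therefore degenerate exactly in this limit, and nothing in your sketch indicates where the strictness would come from. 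Your weaker alternative, that $\lambda_2(K_s)$ never enters $(-\frac{1}{n-1},0)$ from below, would need a signed first-variation formula at the crossing, which you do not propose.

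\textbf{The paper's homotopy mechanism does not transfer.} In the proofs of Theorems~\ref{theorem_m=0}, \ref{theorem_m=1} and \ref{theorem_m>1}, one parity branch is controlled for all $s$ by an independent input. A crossing then produces two linearly independent eigenfunctions of opposite parity for the same eigenvalue, contradicting Lemma~\ref{lemma_sigle_ev}. In your setting, reaching $-\frac{1}{n-1}$ produces a single eigenfunction. Simplicity then only yields the trivial bound $\lambda_2<0$, from collision with the branch $t$.

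The paper supplies the missing input from outside:
\begin{itemize}
\item two origin symmetric bodies of revolution about the same axis are symmetric with respect to the $n$ coordinate hyperplanes, so the global log-Brunn--Minkowski inequality of B\"or\"oczky--Kalantzopoulos applies;
\item the Kolesnikov--Milman global-to-local argument gives (\ref{loc_log_bm_ineq}) for $(K,L)$;
\item the identities from the proof of Theorem~\ref{local_log_bm_zonal} translate this into $\langle\mathcal{D}_0f,f\rangle+\frac{1}{n-1}\langle f,f\rangle\le0$.
\end{itemize}
This even case is then the anchor for all of the paper's homotopy arguments. Without that input, or a genuinely new argument for the critical step, your proposal does not prove the statement.

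A minor point: the endpoints $\pm1$ are limit-circle only for $n\le4$. For $n\ge5$ the singular Frobenius solution is not in $L^2(w_{0,n}dt)$. This is harmless, but the boundary-condition setup should be stated accordingly.
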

	
	This theorem will be proven as a part of Lemma \ref{lemma_m=0_even} in the inequality (\ref{proof_theorem_m=0}).

	However for later purposes and our stronger statement, we need to include also eigenvalues with odd eigenfunctions. To this end, we will use a classical homotopy argument. For $m\in\N_0$ we define the weighted Sobolev space 
	\begin{align*}
		H_m=\left\{\overline{f}\in L^2([-1,1],(1-t^2)^{m+\frac{n-3}{2}}dt):\overline{f}'\in L^2\left([-1,1],(1-t^2)^{m+\frac{n-1}{2}}dt\right)\right\}
	\end{align*}
	equipped with the inner product
	\begin{align*}
		\langle \overline{f}_1,\overline{f}_2\rangle_{H_m}=\int_{-1}^1 \left(\overline{f}_1(t) \overline{f}_2(t) (1-t^2)^{m+\frac{n-3}{2}}+\overline{f}_1'(t)\overline{f}_2'(t)(1-t^2)^{m+\frac{n-1}{2}}\right)dt
	\end{align*}
	and the induced norm $\norm{\cdot}_{H_m}$. Note, that the $0$-order term of the Sobolev-inner product given by
	\begin{align*}
		(\overline{f}_1,\overline{f}_2)\mapsto \int_{-1}^1 \overline{f}_1(t) \overline{f}_2(t) (1-t^2)^{m+\frac{n-3}{2}}dt
	\end{align*}
	induces the same topology as $\langle\cdot,\cdot\rangle_{m,n}$, as the weights only differ by multiplication with a function that is uniformly bounded from above and from below by some positive constant. In this section, we focus on the operator $\mathcal{D}_0$ on $E_0$. For this we only need the space $H_0$.
	
	\begin{theorem}\label{theorem_m=0}
		For all $\overline{f}\in H_0$ with $\langle \overline{f},\eta\rangle_{0,n}=\langle \overline{f},t\rangle_{0,n}=0$ and Neumann boundary conditions
		\begin{align}\label{boundary_conditions_m=0}
			(1-t^2)^{\frac{n-1}{2}}\overline{f}'(t)u(t)\big|_{t=-1}^1=0 \quad \text{for all } u\in H_0
		\end{align}
		we have
		\begin{align*}
			\langle L_0\overline{f},\overline{f}\rangle_{0,n} + \frac{1}{n-1}\langle \overline{f},\overline{f}\rangle_{0,n}\leq 0.
		\end{align*}
		As every $f\in E_0\cap C^2(S^{n-1})$ is given by $f(x)=\overline{f}(x_n)$ for some $\overline{f}\in C^2[-1,1]\subset H_0$ satisfying (\ref{boundary_conditions_m=0}), this proves that any eigenvalue $\lambda\notin\{0,1\}$ of $\mathcal{D}_0$ satisfies
		\begin{align*}
			\lambda\leq -\frac{1}{n-1}.
		\end{align*}
	\end{theorem}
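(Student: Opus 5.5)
We will prove Theorem \ref{theorem_m=0} by splitting $\overline{f}$ into its even and odd parts. Since $K$ is origin symmetric, $\eta$ is even, so $w_{0,n}$ is even and $L_0$ commutes with $t\mapsto -t$. The quadratic form therefore splits orthogonally: for $\overline{f}=\overline{f}_e+\overline{f}_o$ both $\langle L_0\overline{f},\overline{f}\rangle_{0,n}$ and $\langle\overline{f},\overline{f}\rangle_{0,n}$ are sums of the even and odd contributions. Moreover $\langle\overline{f},\eta\rangle_{0,n}=0$ only constrains $\overline{f}_e$, and $\langle\overline{f},t\rangle_{0,n}=0$ only constrains $\overline{f}_o$. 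Writing the form in divergence form via Lemma \ref{lemma_explicit_op} (integration by parts using (\ref{boundary_conditions_m=0})), we get
\begin{align*}
Q(\overline{f})=-\frac{1}{n(n-1)}\int_{-1}^1\mathcal{A}_1\eta^{n-2}(1-t^2)^{\frac{n-1}{2}}\overline{f}'^2dt+\int_{-1}^1\Big(\frac{k_0}{(n-1)\mathcal{A}_2\eta\mathcal{A}_1\eta}+\frac{1}{n-1}\Big)\overline{f}^2w_{0,n}dt .
\end{align*}
This form is continuous on $H_0$, so by density it suffices to work with the variational problem on $H_0$ with natural (Neumann) boundary conditions.

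\emph{Even part.} Here we reduce to the classical local log-Brunn--Minkowski inequality for origin symmetric bodies of revolution, which is Theorem \ref{theorem_eigenvalues_m=0} / \cite{boeroeczkykalantzopoulos}. To make the argument self-contained, we would instead use the homotopy below restricted to even functions. The key point is that on even functions the only eigenvalue that could be crossed is the top one, $1$ (eigenfunction $\eta$), and the linear function $t$ does not belong to this class.

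\emph{Homotopy argument.} Connect $\eta$ to the ball through $\eta_s=(1-s)\eta+s$, $s\in[0,1]$. Each $\eta_s$ is even, positive, and satisfies $\mathcal{A}_1\eta_s,\mathcal{A}_2\eta_s>0$, so every $K_s$ is a $C^2_+$ origin symmetric body of revolution. The operators $L_0^{(s)}$ form a continuous family of Sturm--Liouville operators on the fixed space $H_0$ with uniformly equivalent weights, so their eigenvalues, ordered decreasingly within each parity class, vary continuously in $s$. At $s=1$ (the ball) we have $\eta\equiv1$, $\mathcal{A}_1=\mathcal{A}_2=1$, $k_0=-(n-2)+1$, and $L_0$ is $\frac{1}{n-1}$ times the Laplacian on zonal functions plus $1$. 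Its eigenvalues are $1-\frac{j(j+n-2)}{n-1}$, namely $1,0,-\frac{2n}{n-1}+1=-\frac{n+1}{n-1},\dots$, all of which lie outside $(-\frac{1}{n-1},1)\setminus\{0\}$. For every $s$, $\eta_s$ is an even eigenfunction with eigenvalue $1$, and $t$ is an odd eigenfunction with eigenvalue $0$: indeed $\mathcal{A}_1 t=0$ and $\mathcal{A}_2 t=0$, so $D(D^2 t,\cdot)=0$. By Sturm--Liouville oscillation theory in each parity class, the eigenfunction with a given number of zeros keeps that zero count, hence keeps its index, along the homotopy. The top even eigenvalue stays $1$ and the top odd eigenvalue stays $0$. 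It remains to show that no second even or second odd eigenvalue enters the interval $(-\frac{1}{n-1},1)$.

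\emph{Main obstacle.} Continuity alone does not prevent an eigenvalue from moving into $(-\frac{1}{n-1},0)$ or $(0,1)$. We would therefore need an a priori statement that the value $-\frac{1}{n-1}$ can never be an eigenvalue of $L_0^{(s)}$ with eigenfunction orthogonal to $\eta_s$ and $t$. Our plan is as follows. Suppose $L_0\overline{f}=-\frac{1}{n-1}\overline{f}$, and set $g=\overline{f}'$ or, better, $g=(1-t^2)^{1/2}\mathcal{A}_1$-type combinations. Using the identity $\frac{d}{dt}\sqrt{1-t^2}\mathcal{A}_1\eta=-t\mathcal{A}_2\eta/\sqrt{1-t^2}$, this should transform $g$ into an eigenfunction of $L_1$ (the $m=1$ operator) with a prescribed eigenvalue, in the spirit of the lowering relation between $E_0$ and $E_1$ coming from differentiating in $x_1$. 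We would then derive a contradiction from positivity of the ground state of that operator. A second approach is a direct Bochner-type computation: integrate $(L_0\overline{f}+\frac{1}{n-1}\overline{f})\cdot\mathcal{A}_1$-weighted derivatives and use $\mathcal{A}_2\eta,\mathcal{A}_1\eta>0$ to obtain a sign. Excluding the eigenvalue $-\frac{1}{n-1}$ uniformly in $s$ is the step we expect to be hardest. Once it is done, no eigenvalue can cross it, and the comparison with the ball gives the claim. The final statement about eigenvalues of $\mathcal{D}_0$ then follows from Remark \ref{remark_Cinfty} together with the symmetry of $L_0$.
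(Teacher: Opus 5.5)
Your reduction of the even part to \cite{boeroeczkykalantzopoulos} is what the paper does (Lemma~\ref{lemma_m=0_even}). The odd part, however, has a genuine gap, exactly at the step you call the main obstacle. You never show that $-\frac{1}{n-1}$ is not an eigenvalue of $L_0^{(s)}$ with eigenfunction orthogonal to $\eta_s$ and $t$. Neither of the two routes you sketch (a lowering map into $E_1$, a Bochner-type identity) reaches a point where a sign appears. The first also has no evident mechanism: $\mathcal{D}_K$ commutes only with $\SO(n-1)$, so the infinitesimal rotations mixing $x_1$ and $x_n$, which do map $E_0$ to $E_1$, do not intertwine $L_0$ and $L_1$ unless $K$ is a ball. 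Proving that exclusion for every $s$ is no easier than the theorem itself, since it is a strict form of the same eigenvalue bound for each $K_s$. So the homotopy as you set it up reduces nothing, and for the same reason your ``self-contained'' version of the even part is not available. (Minor slips: at the ball $k_0=(n-2)+1=n-1$, not $-(n-2)+1$, although your resulting formula for $L_0$ there is correct; and the zeroth-order coefficient in your quadratic form is missing a factor $\eta$.)

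The missing idea is to compare the odd spectrum with the even one, not with the fixed threshold $-\frac{1}{n-1}$. The largest even eigenvalue orthogonal to $\eta_s$ is $\le -\frac{1}{n-1}$ for every $s$, so it suffices to keep the largest odd eigenvalue orthogonal to $t$ below it. The paper considers $F(s)=\mu_{\mathrm{even}}(s)-\mu_{\mathrm{odd}}(s)$. At the ball it equals $\mu_2-\mu_3=\frac{n+3}{n-1}>0$, and it can never vanish. Equality would produce an even and an odd, hence independent, solution of $L_{0,s}\overline{f}=\lambda\overline{f}$ with Neumann conditions. Lemma~\ref{lemma_sigle_ev} rules this out: the Frobenius indices at $\pm1$ are $0$ and $-\frac{n-3}{2}$ (with a logarithmic solution if $n=3$), so one solution always violates the Neumann condition. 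Hence $F(1)>0$, and the odd bound follows from the even one.

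You already invoke Sturm oscillation. Used on the whole problem rather than inside each parity class, it gives the same conclusion without any homotopy. The $k$-th eigenfunction has exactly $k$ zeros, and an even eigenfunction cannot vanish at $0$. So parities alternate ($\eta$, $t$, even, odd, \dots), and the second odd eigenvalue lies strictly below the second even one. Either way, the ingredient your proposal lacks is simplicity of the eigenvalues. For the oscillation route you would also need to justify oscillation theory at the singular endpoints under the Neumann condition.
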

	
	\begin{remark}
		Here, the boundary conditions (\ref{boundary_conditions_m=0}) are to be understood in the weak sense
		\begin{align}\label{weak_formulation}
			&\langle L_0\overline{f},\overline{f}\rangle_{0,n} \nonumber \\
			& \quad =\frac{1}{n(n-1)}\int_{-1}^1 \left(-\mathcal{A}_1\eta(t)^{n-2}(1-t^2)^{\frac{n-1}{2}}\overline{f}'(t)^2+ \mathcal{A}_1\eta(t)^{n-3}(1-t^2)^{\frac{n-3}{2}}k_0(t)\overline{f}(t)^2\right)dt.
		\end{align}
	\end{remark}
	
	For the proof of Theorem \ref{theorem_m=0} we need the following information about $H_0$. As we need all the technical results later for the spaces $H_m$ with $m>0$, we state everything for arbitrary $m\in\N_0$ at once.
	
	\begin{lemma}\label{lemma_compact_embedding}
		For any $m\in\N_0$ we have the following.
		\begin{enumerate}
			\item $C^2[-1,1]$ is dense in $H_m$.
			\item If $m\geq 1$ then for every $\overline{f}\in H_m$ we have $\lim\limits_{t\to\pm 1}(1-t^2)^{m+\frac{n-3}{2}}\overline{f}(t)^2=0$. 
			\item $H_m$ embeds compactly in $L^2([-1,1],w_{m,n}(t)dt)$. 
		\end{enumerate} 
	\end{lemma}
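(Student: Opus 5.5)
Write $\alpha=m+\frac{n-3}{2}$, so that $H_m$ is the weighted Sobolev space with weight $(1-t^2)^{\alpha}$ for $\overline{f}$ and $(1-t^2)^{\alpha+1}$ for $\overline{f}'$; note $\alpha\ge -\frac12$ since $n\ge 2$. All three claims are one-dimensional statements about Jacobi-type weights, and I would localize near the endpoints. On any compact $[-1+\delta,1-\delta]$ both weights are bounded above and below, so $H_m$ agrees there with the ordinary $H^1$. Hence it suffices to analyze a neighbourhood of $t=1$, the case $t=-1$ being symmetric. Near $1$ we have $1-t^2\asymp 1-t=:s$, so we work with weights $s^{\alpha}$ and $s^{\alpha+1}$ on $(0,\delta)$.

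For (1), I would first cut off. Take $\chi_\varepsilon(t)=\chi((1-t)/\varepsilon)$, with $\chi$ smooth, vanishing on $[0,1]$ and equal to $1$ on $[2,\infty)$. The error $\overline{f}(1-\chi_\varepsilon)$ tends to $0$ in the zero-order term by dominated convergence. The derivative error splits into two pieces. The piece $\overline{f}'(1-\chi_\varepsilon)$ is harmless. The piece $\overline{f}\chi'_\varepsilon$ has squared norm bounded by
\[
\varepsilon^{-2}\int_\varepsilon^{2\varepsilon}\overline{f}^2 s^{\alpha+1}\,ds\lesssim \varepsilon^{-1}\int_\varepsilon^{2\varepsilon}\overline{f}^2 s^{\alpha}\,ds\cdot \sup_{s\in[\varepsilon,2\varepsilon]}\frac{s}{\varepsilon}\lesssim \int_\varepsilon^{2\varepsilon}\overline{f}^2s^{\alpha-1}\,ds .
\]
This bound is not automatically small, so I would not use plain cutoffs. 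Instead I would approximate by truncated-and-extended functions: set $\overline{f}_\varepsilon(t)=\overline{f}(\min(t,1-\varepsilon))$ near $1$, with the analogous truncation near $-1$. This function is constant near the endpoint, so the derivative error is $\int_0^\varepsilon \overline{f}'^2 s^{\alpha+1}\to 0$. The zero-order error is $\int_0^\varepsilon(\overline{f}(1-\varepsilon)-\overline{f})^2 s^\alpha$, and I would control it by a weighted Hardy inequality:
\[
\int_0^\varepsilon |\overline{f}(s)-\overline{f}(\varepsilon)|^2 s^{\alpha}\,ds\le C\int_0^\varepsilon \overline{f}'^2 s^{\alpha+2}\,ds\le C\varepsilon\int_0^\varepsilon \overline{f}'^2 s^{\alpha+1}\,ds .
\]
This follows from Cauchy–Schwarz on $\int_s^\varepsilon \overline{f}'$ together with Fubini, and it is valid because $\alpha+1>0$. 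After truncation the function is an $H^1$ function on a compact interval, and mollification then gives $C^2$ (even $C^\infty$) approximants. Making this Hardy step rigorous, uniformly in $\alpha\ge -\frac12$ and including the borderline case, is the main technical point.

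For (2), assume $m\ge1$, so $\alpha\ge\frac12>0$. The plan is to differentiate $g=s^{\alpha}\overline{f}^2$. We have $g'=\alpha s^{\alpha-1}\overline{f}^2+2s^\alpha \overline{f}\,\overline{f}'$. The second term is integrable by Cauchy–Schwarz, since $\int s^{\alpha}|\overline{f}||\overline{f}'|\le (\int s^{\alpha}\overline{f}^2)^{1/2}(\int s^{\alpha}\overline{f}'^2)^{1/2}$. The first term is problematic, so I would use the Hardy-type bound instead. For $s<\varepsilon$, write $|\overline{f}(s)|\le|\overline{f}(\varepsilon)|+\int_s^\varepsilon|\overline{f}'|$. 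Cauchy–Schwarz gives
\[
\Bigl(\int_s^\varepsilon|\overline{f}'|\Bigr)^2\le \int_s^\varepsilon \overline{f}'^2 r^{\alpha+1}\,dr\cdot\int_s^\varepsilon r^{-\alpha-1}\,dr\le \frac{s^{-\alpha}}{\alpha}\int_0^\varepsilon\overline{f}'^2r^{\alpha+1}\,dr .
\]
Consequently
\[
\limsup_{s\to0}s^\alpha\overline{f}(s)^2\le \frac{2}{\alpha}\int_0^\varepsilon \overline{f}'^2 r^{\alpha+1}\,dr ,
\]
and letting $\varepsilon\to0$ proves (2).

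For (3), the weight $w_{m,n}$ is comparable to $(1-t^2)^{\alpha}$ by the remark preceding Theorem \ref{theorem_m=0}, so it suffices to prove compactness into $L^2((1-t^2)^\alpha dt)$. Take a bounded sequence in $H_m$. On each $[-1+\delta,1-\delta]$, Rellich's theorem gives a subsequence converging in $L^2$, and a diagonal argument gives a subsequence converging on every compact subinterval. It then remains to show uniform smallness of the tails, namely
\[
\int_0^\delta\overline{f}^2s^\alpha\,ds\le o_\delta(1)\,\norm{\overline{f}}_{H_m}^2 .
\]
This follows from the Hardy estimate used in (1). Writing $\overline{f}=\overline{f}(\delta)+(\overline{f}-\overline{f}(\delta))$, the second part contributes at most $C\delta\norm{\overline{f}}_{H_m}^2$. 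For the first part, $|\overline{f}(\delta)|^2\int_0^\delta s^\alpha$ is bounded because $\overline{f}(\delta)^2\lesssim \norm{\overline{f}}^2_{H_m}$ after averaging over $[\delta,2\delta]$ (again via Hardy), and the integral $\int_0^\delta s^\alpha\,ds$ is of order $\delta^{\alpha+1}\to0$. The main obstacle in (3) is keeping the constant in this estimate uniform; if needed, I would average the point value over a dyadic interval instead of evaluating at the single point $\delta$.
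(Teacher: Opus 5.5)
Parts (1) and (2) are sound. Part (2) is exactly the paper's argument: integrate $\overline{f}'$ from a fixed point near the endpoint, apply Cauchy--Schwarz against $r^{-\alpha-1}$, let $s\to0$ and then $\varepsilon\to0$. Your discarded first idea there was wrong anyway, since $H_m$ controls $\int s^{\alpha+1}\overline{f}'^2$ but not $\int s^{\alpha}\overline{f}'^2$. Part (1) uses the same mechanism as the paper, which approximates $(1-t^2)^{\frac{\alpha+1}{2}}\overline{f}'$ by compactly supported functions and integrates back. In both, the derivative is removed near the endpoints and the zero-order error is controlled by a weighted Hardy inequality valid for all $\alpha>-1$. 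Your truncation-and-mollification version makes explicit the Hardy step that the paper calls an easy calculation.

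The gap is in the tail estimate of (3), in the constant piece $\overline{f}(\delta)^2\int_0^\delta s^{\alpha}\,ds$. The bound $\overline{f}(\delta)^2\lesssim\norm{\overline{f}}_{H_m}^2$ is false uniformly in $\delta$ once $\alpha\ge 0$, which is every case except $m=0$, $n=2$. For instance, in the variable $s=1-t$ and extended by $0$, take $\overline{f}(s)=c_\delta\int_{\max(s,\delta)}^{1}r^{-\alpha-1}\,dr$ normalised in $H_m$. It has $\overline{f}(\delta)^2\sim\delta^{-\alpha}$, or $\sim\log(1/\delta)$ when $\alpha=0$.

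Your proposed remedy is circular. Averaging over $[\delta,2\delta]$ only gives
\[
\delta^{\alpha+1}\overline{f}(\delta)^2\lesssim\int_\delta^{2\delta}\overline{f}^2s^{\alpha}\,ds+\delta\int_\delta^{2\delta}\overline{f}'^2s^{\alpha+1}\,ds,
\]
and the first term is part of the very tail you want to show is uniformly small. The missing step is to compare with a fixed interior point $s_0$, as you already do in (2):
\[
\overline{f}(\delta)^2\lesssim\overline{f}(s_0)^2+\norm{\overline{f}}_{H_m}^2\int_\delta^{s_0}r^{-\alpha-1}\,dr .
\]
Here $\overline{f}(s_0)^2\lesssim\norm{\overline{f}}_{H_m}^2$ by the Sobolev embedding on a compact subinterval. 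This yields $\delta^{\alpha+1}\overline{f}(\delta)^2\lesssim\norm{\overline{f}}_{H_m}^2$ times $\delta$, $\delta\log(1/\delta)$ or $\delta^{\alpha+1}$, according as $\alpha>0$, $\alpha=0$ or $\alpha<0$. That is the required uniform $o_\delta(1)$.

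For $\alpha>0$ this is precisely the paper's route: the (ii)-type bound gives $\sup_t w_{m,n}\overline{f}^2\lesssim\norm{\overline{f}}_{H_m}^2$, so the tail over an interval of length $\delta$ is $O(\delta)$. Once repaired, your Hardy-based splitting also covers $m=0$ with $n\in\{2,3\}$. There the paper's constant $C_{m,n}$ is infinite and $w_{m,n}\overline{f}_k^2$ need not be bounded, so in those cases your version is actually the more complete one.
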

	\begin{proof}
		For (i) assume $\overline{f}\in H_m$. Then by definition the mapping $t\to (1-t^2)^{\frac{m}{2}+\frac{n-1}{4}}\overline{f}'(t)$ is in $L^2[-1,1]$ and hence it can be approximated by a sequence $\overline{g}_k\in C^1_c(-1,1)$. For $\varepsilon>0$ the weights $(1-t^2)^{m+\frac{n-1}{2}}$ and $(1-t^2)^{m+\frac{n-3}{2}}$ are uniformly bounded on $[-1+\varepsilon,1-\varepsilon]$ from above and from below by some positive constant. Therefore $\overline{f}|_{[-1+\varepsilon,1-\varepsilon]}\in H^1[-1+\varepsilon,1-\varepsilon]$. By the Sobolev embedding theorem, $\overline{f}\in C(-1,1)$. Then an easy calculation shows that the sequence $(\overline{f}_k)_{k\in\N}\subset C^2[-1,1]$ given by
		\begin{align*}
			\overline{f}_k(t)=\int_0^t (1-s^2)^{-\frac{m}{2}-\frac{n-1}{4}}\overline{g}_k(s)ds + \overline{f}(0)
		\end{align*}
		converges to $\overline{f}$ in $H_m$.
		
		For (ii) let $\varepsilon>0$. Then we have
		\begin{align*}
			\overline{f}(t)=\int_{1-\varepsilon}^{t}\overline{f}'(s)ds + \overline{f}(1-\varepsilon)
		\end{align*}
		and hence by the Cauchy-Schwarz inequality
		\begin{align*}
			&(1-t^2)^{m+\frac{n-3}{2}}\overline{f}(t)^2 \\
			& \quad \leq 2(1-t^2)^{m+\frac{n-3}{2}}\left(\int_{1-\varepsilon}^t \overline{f}'(s)ds\right)^2 + 2(1-t^2)^{m+\frac{n-3}{2}}\overline{f}(1-\varepsilon)^2 \\
			& \quad \leq 2(1-t^2)^{m+\frac{n-3}{2}}\left(\int_{1-\varepsilon}^t (1-s^2)^{-m-\frac{n-1}{2}}ds\right)\left(\int_{1-\varepsilon}^t (1-s^2)^{m+\frac{n-1}{2}}\overline{f}'(s)^2ds\right) \\
			& \hspace{5cm} +  2(1-t^2)^{m+\frac{n-3}{2}}\overline{f}(1-\varepsilon)^2.
		\end{align*}
		By l'Hospital's rule, the constant $C_{m,n}=\lim\limits_{t\to 1} (1-t^2)^{m+\frac{n-3}{2}}\left(\int_{-t}^t (1-s^2)^{-m-\frac{n-1}{2}}ds\right)$ is well defined, hence using $m\geq 1$ we have
		\begin{align*}
			\limsup\limits_{t\to 1} \ (1-t^2)^{m+\frac{n-3}{2}}\overline{f}(t)^2 \leq C_{m,n}\left(\int_{1-\varepsilon}^1 (1-s^2)^{m+\frac{n-1}{2}}\overline{f}'(s)^2ds\right).
		\end{align*}
		Now $\varepsilon\to 0$ proves the statement for $t\to 1$. The case $t\to -1$ is shown analogously.
		
		For (iii) let $(\overline{f}_k)_{k\in\N}$ be a bounded sequence in $H_m$. As above then $\overline{f}_k|_{[-1+\varepsilon,1-\varepsilon]}\in H^1[-1+\varepsilon,1-\varepsilon]$ for all $k\in\N$. By the Rellich-Kondrachov theorem, $H^1[-1+\varepsilon,1-\varepsilon]$ embeds compactly into $L^2[-1+\varepsilon,1-\varepsilon]$ and hence we can construct successive subsequences that are convergent in the space
		\begin{align*}
			L^2\left(\left[-1+\frac{1}{N},1-\frac{1}{N}\right],w_{m,n}(t)dt\right)
		\end{align*}
		for $N>1$. We claim that $(w_{m,n}\overline{f}_k^2)_{k\in\N}$ is uniformly bounded on $(-1,1)$. Then using a standard diagonal sequence argument we can construct a subsequence that converges in $L^2([-1,1],w_{m,n}(t)dt)$. To prove that $(w_{m,n}\overline{f}_k^2)_{k\in\N}$ is uniformly bounded on $(-1,1)$ observe that for any $\overline{f}\in H_m$ we have as for (ii)
		\begin{align*}
			\overline{f}(t)^2\leq C_{m,n} \left(\int_{-1}^1 (1-s^2)^{m+\frac{n-1}{2}}\overline{f}'(s)^2ds\right)+2\overline{f}(0)^2
		\end{align*}
		for $t\in(-1,1)$. By the Sobolev embedding theorem the right hand side is uniformly bounded in terms of $\norm{\overline{f}}_{H_m}$. This proves the claim and thereby the compactness of the embedding. 
	\end{proof}
	
	First, we prove Theorem \ref{theorem_m=0} for even $\overline{f}$ generalizing Theorem \ref{theorem_eigenvalues_m=0}. This uses the known local logarithmic Brunn-Minkowski inequality for two origin symmetric convex bodies of revolution.
	
	\begin{lemma}\label{lemma_m=0_even}
		For any even $\overline{f}\in H_0$ with $\langle \overline{f},\eta\rangle_{0,n}=0$ and Neumann boundary conditions (\ref{boundary_conditions_m=0}) we have
		\begin{align}\label{ineq_m=0_even}
			\langle L_0\overline{f},\overline{f}\rangle_{0,n} + \frac{1}{n-1}\langle \overline{f},\overline{f}\rangle_{0,n}\leq 0.
		\end{align}
	\end{lemma}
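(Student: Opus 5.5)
We would prove Lemma \ref{lemma_m=0_even} by reducing it to the known local logarithmic Brunn-Minkowski inequality for two origin symmetric convex bodies that are symmetric under reflections along the same $n$ coordinate hyperplanes, due to Böröczky and Kalantzopoulos \cite{boeroeczkykalantzopoulos} (equivalently, the Kolesnikov--Milman result for such bodies), and then extending by density. The approach has three steps.

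First, we treat smooth even $\overline{f}$. Take $\overline{f}\in C^2[-1,1]$ even with $\langle\overline{f},\eta\rangle_{0,n}=0$. For $s>0$ small, the function $\eta+s\overline{f}$ still satisfies $\mathcal{A}_2(\eta+s\overline{f})>0$, $\mathcal{A}_1(\eta+s\overline{f})>0$ and $\eta+s\overline{f}>0$ on $[-1,1]$, because these are open conditions in $C^2$. By Remark \ref{remark_Cinfty} and Lemma \ref{zonal_hesssian}, $x\mapsto(\eta+s\overline{f})(x_n)$ is therefore the support function of an origin symmetric body of revolution $L$ of class $C^2_+$. The pair $(K,L)$ is reflection symmetric in the coordinate hyperplanes, so (\ref{loc_log_bm_ineq}) with $p=0$ holds for it.

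We then run the computation in the proof of Theorem \ref{local_log_bm_zonal} backwards. Using the identities for $\langle\mathcal{D}_Kh_L,h_L\rangle$, $\langle h_L,h_L\rangle$ and $\langle h_L,h_K\rangle$, inequality (\ref{loc_log_bm_ineq}) for $h_L=h_K+sf$, with $f(x)=\overline{f}(x_n)$, is equivalent to
\[
\langle\mathcal{D}_K g,g\rangle+\tfrac{1}{n-1}\langle g,g\rangle\le 0,\qquad g=h_L-\tfrac{\langle h_L,h_K\rangle}{\langle h_K,h_K\rangle}h_K .
\]
Since $f\perp h_K$, we have $g=sf$. Dividing by $s^2$ and translating to $[-1,1]$ with Lemma \ref{lemma_Lp} and Lemma \ref{lemma_explicit_op} gives (\ref{ineq_m=0_even}) for such $\overline{f}$. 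This also proves Theorem \ref{theorem_eigenvalues_m=0}.

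Second, we pass to general even $\overline{f}\in H_0$. We use the weak form (\ref{weak_formulation}), in which the quadratic form
\[
Q(\overline{f})=\langle L_0\overline{f},\overline{f}\rangle_{0,n}+\tfrac{1}{n-1}\langle\overline{f},\overline{f}\rangle_{0,n}
\]
is continuous on $H_0$. This holds because the coefficients $\mathcal{A}_1\eta^{n-2}$, $\mathcal{A}_1\eta^{n-3}k_0$ and $w_{0,n}/(1-t^2)^{\frac{n-3}{2}}$ are bounded on $[-1,1]$, so $Q$ is controlled by $\norm{\cdot}_{H_0}^2$. For $\overline{f}\in C^2[-1,1]$ the Neumann condition (\ref{boundary_conditions_m=0}) holds automatically, since the factor $(1-t^2)^{\frac{n-1}{2}}$ vanishes at $\pm1$ for $n\geq 2$. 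Hence (\ref{weak_formulation}) agrees with the classical expression obtained by integrating by parts.

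By Lemma \ref{lemma_compact_embedding}(i), $C^2[-1,1]$ is dense in $H_0$, and we approximate $\overline{f}$ by $\overline{f}_k\in C^2[-1,1]$. Symmetrizing via $t\mapsto\frac{1}{2}(\overline{f}_k(t)+\overline{f}_k(-t))$ keeps the approximants even and convergent, because reflection is an isometry of $H_0$. Next we replace $\overline{f}_k$ by $\overline{f}_k-\frac{\langle\overline{f}_k,\eta\rangle_{0,n}}{\langle\eta,\eta\rangle_{0,n}}\eta$ to restore orthogonality to $\eta$. Here $\eta$ is even because $K$ is origin symmetric, and the correction coefficient tends to $0$. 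Applying step one to these approximants and using continuity of $Q$ gives (\ref{ineq_m=0_even}).

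The main obstacle is the first step. We must check carefully that the identities from the proof of Theorem \ref{local_log_bm_zonal}, stated there for general $L$, match the one-dimensional form $\langle L_0\cdot,\cdot\rangle_{0,n}$, including the normalizing factors $\frac1n$ and $\frac{1}{n-1}$. We must also justify that the cited inequality applies to bodies of revolution of class $C^2_+$ exactly in the form (\ref{loc_log_bm_ineq}). If the cited reference only gives the global inequality (\ref{log_bm_ineq}), we would instead differentiate $t\mapsto\log\vol(K^{1-t}L^t)$ twice at $t=0$. For $L$ close to $K$ in $C^2$, the Wulff shape coincides with the body whose support function is $h_K^{1-t}h_L^{t}$, and concavity of this function in $t$ yields the local inequality, as in \cite{kolesnikovmilman}.
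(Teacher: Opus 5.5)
Your proposal is correct and follows the paper's proof. For even $\overline{f}\in C^2[-1,1]$ you realize $h_K+sf$ as the support function of an origin symmetric body of revolution; the paper uses $f+ch_K$, which is the same after rescaling. You then apply B\"or\"oczky--Kalantzopoulos with the Kolesnikov--Milman global-to-local implication, translate back via Lemmas \ref{lemma_Lp} and \ref{lemma_explicit_op}, and extend to $H_0$ by density (Lemma \ref{lemma_compact_embedding}(i)) and continuity of the weak quadratic form. Your explicit symmetrization and re-orthogonalization of the approximants against $\eta$ just spell out the density step the paper states in one line.
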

	\begin{proof}[Proof of Theorem \ref{theorem_eigenvalues_m=0} and Lemma \ref{lemma_m=0_even}]
		First assume $\overline{f}\in C^2[-1,1]$ and let $f\in C^2(S^{n-1})$ be defined by $f(x)=\overline{f}(x_n)$. Then for $c>0$ sufficiently large, $f+ch_K$ is the support function of a smooth convex body $L$ and hence $f=h_L-ch_K$. Furthermore by definition, $f$ is zonal and hence $L$ is a body of revolution. The condition $\langle \overline{f},\eta\rangle_{0,n}=0$ is equivalent to $\langle f,h_k\rangle=0$ and implies $f=h_L-\frac{\langle h_L,h_K\rangle}{\langle h_K,h_K\rangle}h_K$. As any pair of two origin symmetric bodies of revolution is symmetric under reflections along the coordinate hyperplanes, \cite[Theorem 2]{boeroeczkykalantzopoulos} shows that the pair $(K,L)$ satisfies (\ref{log_bm_ineq}). By the classical global to local implication (c.f. \cite[Theorem 3.4]{kolesnikovmilman} adapted to the setting of symmetric bodies), this implies (\ref{loc_log_bm_ineq}). Then the same calculation as in the proof of Theorem \ref{local_log_bm_zonal} shows that
		\begin{align}\label{proof_theorem_m=0}
			\langle \mathcal{D}_0 f,f\rangle + \frac{1}{n-1}\langle f,f\rangle \leq 0.
		\end{align}
		This proves Theorem \ref{theorem_eigenvalues_m=0}. By Lemma \ref{lemma_explicit_op}, the inequality (\ref{proof_theorem_m=0}) precisely means that $\overline{f}$ satisfies (\ref{ineq_m=0_even}).
		As the left hand side of \eqref{ineq_m=0_even} is clearly continuous in $H_0$, by Lemma \ref{lemma_compact_embedding}(i) this extends to all $\overline{f}\in H_0$ proving the statement.
		
	\end{proof}
	
	Without the assumption that $f$ is even, we cannot reduce our problem to a known case of the local logarithmic Brunn-Minkowski inequality. We therefore need to closer investigate the eigenvalue problem. For the analysis of the operator we need the following theorem, which is the baseline of what is often referred to as the Frobenius method. It studies linear differential equations with singular coefficients of the form
	\begin{align}\label{fuchs_eq}
		u''(z)+\frac{p(z)}{z-z_0}u'(z)+\frac{q(z)}{(z-z_0)^2}u(z)=0,
	\end{align}
	where $p$ and $q$ are holomorphic functions on some domain that contains $z_0$.
	
	\begin{theorem}\label{frobenius_method}
		Fix $z_0\in\C$ and let $p(z)=\sum_{k=0}^\infty p_k(z-z_0)^k$ as well as $q(z)=\sum_{k=0}^\infty q_k(z-z_0)^k$ be holomorphic functions on $B_R(z_0)$ for $R>0$. Denote by $\alpha_1$, $\alpha_2$ the roots of the polynomial
		\begin{align}\label{indical_polynomial}
			P(\alpha)=\alpha^2+(p_0-1)\alpha+q_0,
		\end{align}
		where $\Rea(\alpha_1)\geq \Rea(\alpha_2)$. Then there exists a unique holomorphic function $h_1:B_R(z_0)\to\C$ with $h_1(z_0)=1$, such that the function $u_1:B_R(z_0)\setminus (z_0+(-\infty,0])\to\C$ given by $u_1(z)=(z-z_0)^{\alpha_1} h_1(z)$ solves (\ref{fuchs_eq}). 
		
		If $\alpha_1-\alpha_2\notin \N_0$ then there exists a solution $u_2:B_R(z_0)\setminus (z_0+(-\infty,0])\to\C$ to (\ref{fuchs_eq}) given by $u_2(z)=(z-z_0)^{\alpha_2} h_2(z)$ for some holomorphic function $h_2:B_R(z_0)\to\C$ with $h_2(z_0)=1$. If on the other hand $\alpha_1-\alpha_2\in\N_0$ then there exists $c\in\C$ and a holomorphic function $h_2:B_R(z_0)\to\C$ with $h_2(z_0)=1$, such that $u_2:B_R(z_0)\setminus (z_0+(-\infty,0])\to\C$ to (\ref{fuchs_eq}) given by $u_2(z)=(z-z_0)^{\alpha_2} h_2(z)+c(z-z_0)^{\alpha_1}h_1(z)\log(z-z_0)$ is a solution to (\ref{fuchs_eq}) that is linearly independent to $u_1$.
	\end{theorem}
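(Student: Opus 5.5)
The statement to prove is the classical Frobenius theorem. I would reduce it to a power-series computation and then prove convergence by majorants.

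\emph{Recurrence for the first solution.} Without loss of generality $z_0=0$. Substituting $u=z^{\alpha}\sum_{k\ge0}c_kz^k$ with $c_0=1$ into $z^2u''+zp(z)u'+q(z)u=0$ and comparing coefficients of $z^{\alpha+k}$ gives
\begin{align*}
P(\alpha+k)c_k=-\sum_{j=0}^{k-1}\big((\alpha+j)p_{k-j}+q_{k-j}\big)c_j ,\qquad k\ge1.
\end{align*}
For $\alpha=\alpha_1$ we have $P(\alpha_1+k)=k(k+\alpha_1-\alpha_2)$. This is nonzero for $k\ge1$, because $\Rea(\alpha_1-\alpha_2)\ge0$. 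Hence the $c_k$ are uniquely determined, which also gives the uniqueness of $h_1$.

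\emph{Convergence.} Fix $0<r<R$. By Cauchy's estimates, $|p_k|,|q_k|\le Mr^{-k}$. Since $|P(\alpha_1+k)|\ge k^2$ and $|\alpha_1+j|\le |\alpha_1|+k$, the recurrence gives
\begin{align*}
|c_k|\le \frac{C}{k}\sum_{j<k}|c_j|\,r^{-(k-j)}.
\end{align*}
I would compare with the majorant sequence $d_k$ obtained by turning this inequality into an equality, with $d_0=1$. For it one checks $d_{k+1}/d_k\to 1/r$; equivalently, by induction, $|c_k|\le A\rho^{-k}$ for every $\rho<r$. So $h_1$ is holomorphic on $B_r$ for every $r<R$, hence on $B_R$. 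The function $u_1=z^{\alpha_1}h_1$ is then a solution on the slit disc, using the principal branch.

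\emph{Second solution, case $\alpha_1-\alpha_2\notin\N_0$.} Then $P(\alpha_2+k)=k(k-(\alpha_1-\alpha_2))\ne0$ for all $k\ge1$. The same recurrence and the same majorant argument give $h_2$.

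\emph{Second solution, case $\alpha_1-\alpha_2=N\in\N_0$.} This is the main obstacle. I would use reduction of order: set $u_2=u_1v$. Then
\begin{align*}
v''=-\Big(\frac{2u_1'}{u_1}+\frac{p}{z}\Big)v'.
\end{align*}
By Abel's formula, $v'=z^{-p_0}e^{-\int (p-p_0)/z}\,z^{-2\alpha_1}h_1^{-2}$. Since $\alpha_1+\alpha_2=1-p_0$, this equals $z^{-N-1}g(z)$ with $g$ holomorphic near $0$ and $g(0)=1$.

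Integrating termwise gives
\begin{align*}
v=\sum_{k\ne N}\frac{g_k}{k-N}\,z^{k-N}+g_N\log z .
\end{align*}
Therefore $u_2=z^{\alpha_2}\tilde h(z)+g_N\,u_1\log z$ with $\tilde h$ holomorphic and $\tilde h(0)=-1/N$ for $N\ge1$. After normalising (for $N=0$, take $u_2=u_1\log z$ directly) this is the stated form with $c=g_N$ up to scaling. Linear independence follows from the different leading behaviour, or from the logarithm when $N=0$.

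One caveat: the holomorphy of $\tilde h$ on all of $B_R$ is only immediate near $0$, since $h_1$ may vanish elsewhere. To get it on all of $B_R$, I would instead plug the ansatz $z^{\alpha_2}h_2+cz^{\alpha_1}h_1\log z$ directly into the equation. The coefficient $c$ is then chosen to resolve the obstruction at $k=N$, and the majorant argument is repeated for the resulting inhomogeneous recurrence. This yields convergence on $B_R$.
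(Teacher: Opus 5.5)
The paper gives no proof of this statement; it quotes the classical Frobenius theorem and refers to Teschl's textbook. Your argument is the standard textbook proof and is correct in outline. It consists of the recurrence $P(\alpha+k)c_k=-\sum_{j<k}((\alpha+j)p_{k-j}+q_{k-j})c_j$, the majorant with $d_{k+1}/d_k\to 1/r$, and, in the resonant case, reduction of order or a direct logarithmic ansatz.

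There is one slip. For $N=0$, the function $u_1\log z$ is in general not a solution. For Bessel's equation of order $0$ ($p=1$, $q=z^2$) one gets $z^2(u_1\log z)''+z(u_1\log z)'+z^2u_1\log z=2zu_1'\neq 0$. Your own formula already handles this case. It gives $v=\log z+\sum_{k\ge1}\frac{g_k}{k}z^k$, so
$u_1v=u_1\log z+z^{\alpha_1}h_1(z)\sum_{k\ge1}\frac{g_k}{k}z^k$.
The non-logarithmic part of this vanishes at $0$, so add $u_1$. This gives the stated form with $c=1$ and $h_2=h_1\bigl(1+\sum_{k\ge1}\frac{g_k}{k}z^k\bigr)$, so that $h_2(0)=1$.

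Two smaller remarks. First, in the non-resonant case the majorant needs $|P(\alpha_2+k)|\ge\delta k^2$ for some $\delta>0$, not $|P(\alpha_2+k)|\ge k^2$. This holds because these values are nonzero for $k\ge 1$ and asymptotic to $k^2$.

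Second, your caveat about the holomorphy of $\tilde h$ on all of $B_R$ is justified, and your fix via the direct ansatz works. It can also be settled without a second majorant argument. The function $\tilde h=z^{-\alpha_2}(u_2-c\,u_1\log z)$ continues analytically along every path in $B_R\setminus\{0\}$, since $u_2$ solves a linear equation with coefficients holomorphic there. It is single valued and holomorphic on a small disc around $0$. Every loop in $B_R\setminus\{0\}$ is homotopic to a multiple of a small circle, so by the monodromy theorem $\tilde h$ is single valued on $B_R\setminus\{0\}$. It therefore extends holomorphically to $B_R$.
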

	
	The existence of solutions to the equation (\ref{fuchs_eq}) of the given form was first proven by Fuchs, whereas the work of Frobenius majorly consisted of analyzing the domain of the solutions. In contribution to that, the method is called the Frobenius method and the solutions $\alpha_{1/2}$ to the polynomial (\ref{indical_polynomial}) are called the Frobenius indices. The proof of the theorem can be found in many standard textbooks, see e. g. \cite[Section V]{teschl}.
	
	To apply the theorem, we need to assume that $\eta$ is analytic, which is however no loss of generality, as every $C^2$ function can be approximated by analytic functions in the $C^2$ sense. 
	
	\begin{lemma}\label{lemma_sigle_ev}
		Assume that $\eta$ is analytic and $m\in\N_0$. Then for any $\lambda\in \R$ there is at most a one dimensional family of weak solutions in $H_m$ to the equation
		\begin{align}\label{sturm_liouville}
			L_m\overline{f}=\lambda\overline{f}
		\end{align}
		with Neumann boundary conditions
		\begin{align}\label{boundary_conditions_arbitrary_m}
			(1-t^2)^{m+\frac{n-1}{2}}\overline{f}'(t)u(t)\big|_{t=-1}^1=0 \quad \text{for all } u\in H_m.
		\end{align}
	\end{lemma}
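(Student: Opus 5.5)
The plan is to use the Frobenius method at one endpoint, say $t=1$. The boundary condition there together with membership in $H_m$ should select a one-dimensional subspace of local solutions. Since a solution of a second order ODE on $(-1,1)$ is determined by its local behaviour near $t=1$, this gives the claim.

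First, weak solutions in $H_m$ of (\ref{sturm_liouville}) are classical solutions on $(-1,1)$. The coefficients of $L_m$ are analytic on $(-1,1)$ and the leading coefficient is nonvanishing there, since $\eta,\mathcal{A}_1\eta,\mathcal{A}_2\eta>0$. Interior elliptic regularity for the Sturm-Liouville equation in divergence form then shows that $\overline{f}$ is analytic on $(-1,1)$. Moreover, the space of classical solutions on $(-1,1)$ is two-dimensional and determined by data at any interior point. So it suffices to show that the conditions at $t=1$ cut this two-dimensional space down to dimension at most one.

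Next, I would rewrite $L_m\overline{f}=\lambda\overline{f}$ near $z_0=1$ in the form (\ref{fuchs_eq}). Expanding the divergence term gives
\begin{align*}
\overline{f}''+\Big(\frac{(n-2)(\mathcal{A}_1\eta)'}{\mathcal{A}_1\eta}-\frac{(2m+n-1)t}{1-t^2}\Big)\overline{f}'+\frac{\mathcal{A}_1\eta\,(k_m-\lambda(n-1)\mathcal{A}_2\eta/\eta)}{(1-t^2)\mathcal{A}_1\eta^{n-2}\cdot \mathcal{A}_1\eta^{-(n-2)}}\cdot\frac{\overline{f}}{\mathcal{A}_1\eta}=0,
\end{align*}
or, more cleanly, the coefficient of $\overline{f}$ is an analytic function divided by $(1-t^2)$. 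Since $(\mathcal{A}_1\eta)'=-t\eta''$ is analytic, $p(z)$ is analytic near $1$, with $p_0=\frac{2m+n-1}{2}$ coming from $-\frac{(2m+n-1)t}{1-t^2}\cdot(t-1)\to\frac{2m+n-1}{2}$. The zeroth-order coefficient has only a simple pole, so after multiplying by $(t-1)^2$ we get $q_0=0$.

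The indicial polynomial is then $\alpha^2+(p_0-1)\alpha$, with roots $\alpha_1=0$ and $\alpha_2=-(m+\frac{n-3}{2})$. I would split into two cases.
\begin{itemize}
\item If $\alpha_2<0$, which holds unless $m=0$ and $n\leq 3$, Theorem \ref{frobenius_method} gives a basis $u_1$ analytic with $u_1(1)=1$, and $u_2$ behaving like $(1-t)^{\alpha_2}$, possibly plus a $\log$ term times $u_1$. When $\alpha_1-\alpha_2\notin\N_0$ there is no logarithm, but $u_2$ is still singular like $(1-t)^{\alpha_2}$.
\item If $\alpha_2=0$ (the case $m=0$, $n=3$), or if $n=2$ with $m=0$ so that $\alpha_2=\frac12>0$, the roles are handled by the same computation.
\end{itemize}

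The key step is to show that any solution $\overline{f}=au_1+bu_2$ lying in $H_m$ and satisfying (\ref{boundary_conditions_arbitrary_m}) must have $b=0$. For $\alpha_2<0$, we have $u_2'\sim c(1-t)^{\alpha_2-1}$, so $(1-t^2)^{m+\frac{n-1}{2}}u_2'(t)$ tends to a nonzero constant. Indeed the exponent is $m+\frac{n-1}{2}+\alpha_2-1=0$; in the logarithmic case the extra term is $c\log(1-t)$ times something analytic, which is lower order. Testing (\ref{boundary_conditions_arbitrary_m}) with $u\in C^2[-1,1]\subset H_m$ that equals $1$ near $t=1$ and vanishes near $t=-1$ forces $b=0$, because $u_1'$ is bounded and so contributes zero. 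One may alternatively use membership in $H_m$: $u_2'$ is not in $L^2((1-t^2)^{m+\frac{n-1}{2}}dt)$, since the exponent $2(\alpha_2-1)+m+\frac{n-1}{2}=\alpha_2-1-\frac{n-3}{2}-m+\dots\le -1$.

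The degenerate cases $\alpha_2=0$ (log solution, $u_2'\sim 1/(1-t)$) and $n=2$, $m=0$ ($\alpha_2=\frac12$) need the same check by hand. I expect this bookkeeping to be the main obstacle, in particular verifying nonvanishing of the limit constant in the log case. I would handle it by computing $u_2'$ directly from the Frobenius form and noting that the boundary term equals the Wronskian-type quantity $(1-t^2)^{m+\frac{n-1}{2}}\mathcal{A}_1\eta^{n-2}W(u_1,u_2)$. By Abel's identity this quantity is a nonzero constant, which gives a uniform argument across all cases.
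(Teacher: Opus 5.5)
Your proposal is correct and follows the paper's proof. Both rewrite $L_m\overline{f}=\lambda\overline{f}$ in Fuchs form at an endpoint and obtain the Frobenius indices $0$ and $-(m+\frac{n-3}{2})$; the non-analytic (possibly logarithmic) solution then violates $\lim_{t\to 1}(1-t^2)^{m+\frac{n-1}{2}}\overline{f}'(t)=0$, so the admissible solutions form a proper subspace of the two-dimensional solution space. Your Abel-identity check that this limit is nonzero (using $u_1(1)=1$ and $(1-t^2)^{m+\frac{n-1}{2}}u_1'u_2\to 0$) fills in a step the paper only asserts, and the slip in your displayed equation ($k_m/\mathcal{A}_1\eta$ rather than $k_m$ in the zeroth-order coefficient) is harmless, since only the simple pole matters and it still gives $q_0=0$.
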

	
	\begin{remark}\label{remark_weak_solution}
		As before, the problem is to be understood in a weak sense, i.e. a solution $\overline{f}\in H_m$ satisfies
		\begin{align*}
			0 & = -\frac{1}{n-1}\int_{-1}^1 \mathcal{A}_1\eta(t)^{n-2}(1-t^2)^{m+\frac{n-1}{2}}\overline{f}'(t)u'(t)dt \\
			& \quad + \int_{-1}^1 \frac{\mathcal{A}_1\eta(t)^{n-3}(1-t^2)^{m+\frac{n-3}{2}}}{n-1}\left(k_m(t)-\frac{\lambda \mathcal{A}_2\eta(t)\mathcal{A}_1\eta(t)}{\eta(t)}\right)\overline{f}(t)u(t)dt
		\end{align*} 
		for all $u\in H_m$. However, standard regularity theory yields that for analytic $\eta$ any weak solution $\overline{f}$ is in fact in $C^2(-1,1)$, and hence the weak formulation is equivalent to the classical problem with boundary conditions in the classical sense by simple integration by parts. Furthermore we note, that if in (\ref{boundary_conditions_m=0}) we choose $u$ to be continuous on $[-1,1]$ and concentrated around one endpoint, this implies 
		\begin{align}\label{strong_Neumann_bc}
			\lim\limits_{t\to\pm 1} (1-t^2)^{m+\frac{n-1}{2}}\overline{f}'(t)=0.
		\end{align}
	\end{remark}
	
	\begin{proof}
		Rewriting (\ref{sturm_liouville}) in the form of (\ref{fuchs_eq}) yields 
		\begin{align*}
			&\overline{f}''(t)+\frac{1}{1-t^2}\left(-(n-2)\frac{\mathcal{A}_2\eta(t)}{\mathcal{A}_1\eta(t)}-(2m+1)\right)t \overline{f}'(t) \\
			&\quad +\frac{1}{1-t^2}\left((n-2)\frac{\mathcal{A}_2\eta(t)}{\mathcal{A}_1\eta(t)}+2m+1+\frac{(n-1)\lambda\mathcal{A}_2\eta(t)}{\eta(t)}\right)\overline{f}(t)=0
		\end{align*}
		or equivalently 
		\begin{align*}
			\overline{f}''(t)+\frac{p^\pm(t)}{t \mp 1}\overline{f}'(t)+\frac{q^\pm(t)}{(t \mp 1)^2}\overline{f}(t)=0
		\end{align*}
		with 
		\begin{align*}
			p^\pm(t) & =\frac{t}{t \pm 1}((n-2) \frac{\mathcal A_2(t)}{\mathcal A_1(t)}+2m+1)\\
			q^\pm(t) & = \frac{1 \mp t}{1 \pm t} \left((n-2)\frac{\mathcal{A}_2\eta(t)}{\mathcal{A}_1\eta(t)}+2m+1+\frac{(n-1)\lambda\mathcal{A}_2\eta(t)}{\eta(t)}\right).
		\end{align*}
		Now as remarked before, any solution in $H_m$ is even $C^2(-1,1)$ and hence the existence and uniqueness theorem implies, that it is a linear combination of the two solutions given by the Frobenius method. As $p^\pm(\pm 1)=m+\frac{n-1}{2}$ and $q^{\pm}(\pm 1)=0$ the Frobenius indices satisfy 
		\begin{align*}
			\alpha^2+ \left(m+\frac{n-3}{2}\right)\alpha=0
		\end{align*}
		leading to $\alpha_1=0$ and $\alpha_2=-m-\frac{n-3}{2}$. If $n\neq 3$ or $m>0$ then at least one solution grows like $(t- 1)^{-m-\frac{n-3}{2}}$ around $1$. Therefore its derivative behaves as $(t-1)^{-m-\frac{n-1}{2}}$ for $t\to1$ and does not satisfy (\ref{strong_Neumann_bc}). Therefore by Remark \ref{remark_weak_solution} it does not satisfy the Neumann boundary conditions (\ref{boundary_conditions_arbitrary_m}). For $n=3$ and $m=0$ both indices are the same and therefore by the uniqueness statement in Theorem \ref{frobenius_method}, at least one solution must contain a logarithmic term around $1$. Again, this means that the respective derivative does not satisfy the boundary conditions.
	\end{proof}
	
	We can now use a classical homotopy argument to prove Theorem \ref{theorem_m=0}. 
	
	\begin{proof}[Proof of Theorem \ref{theorem_m=0}]
		Without loss of generality we can assume that $\eta$ is analytic. Now for $s\in [0,1]$ define $\eta_s(t)=1-s+s\eta(t)$ and let $L_{0,s}$ be the operator $L_0$ defined by $\eta_s$ as well as $\langle\cdot,\cdot\rangle_{0,n,s}$ the respective inner product. Assume boundary conditions (\ref{boundary_conditions_m=0}) and consider the function
		\begin{align}\label{definition_F(s)}
			F(s)= \sup_{\substack{0\neq\overline{f}\in H_0 \text{ even}\\ \overline{f}\perp \eta_s}} \frac{\langle L_{0,s}\overline{f},\overline{f}\rangle_{0,n,s}}{\langle \overline{f},\overline{f}\rangle_{0,n,s}} - \sup_{\substack{0\neq\overline{f}\in H_0 \text{ odd}\\ \overline{f}\perp t}} \frac{\langle L_{0,s}\overline{f},\overline{f}\rangle_{0,n,s}}{\langle \overline{f},\overline{f}\rangle_{0,n,s}}.
		\end{align}
		Now Lemma \ref{lemma_m=0_even} states that the first summand is $\leq -\frac{1}{n-1}$ for all $s$. What we want to prove is, that for $s=1$, both suprema are $\leq -\frac{1}{n-1}$. Assume that this would not be the case, then this means $F(1)<0$. 
		
		We first investigate the case $s=0$ and claim that $F(0)>0$. With $\eta_0=1$ we have 
		\begin{align*}
			\langle \overline{f}_1,\overline{f}_2\rangle_{0,n,0}=\frac{1}{n}\int_{-1}^1 (1-t^2)^{\frac{n-3}{2}}\overline{f}_1(t)\overline{f}_2(t)dt
		\end{align*}
		as well as
		\begin{align*}
			L_{0,0}\overline{f}(t)=\frac{1}{n-1} \cdot  \frac{1}{(1-t^2)^{\frac{n-3}{2}}}\frac{d}{dt}(1-t^2)^{\frac{n-1}{2}}\overline{f}'(t)+\overline{f}(t).
		\end{align*}
		
		We state that for $s=0$ each of the two quotients in (\ref{definition_F(s)}) has a maximizer in $H_0$. This can be seen by a classical compactness argument as follows. Assume, that $(\overline{f}_k)_{k\in\N}\subset H_0$ is a maximizing sequence satisfying $\langle \overline{f}_k,\overline{f}_k\rangle_{0,n,0}=1$. Then we have
		\begin{align*}
			\langle L_0\overline{f}_k,\overline{f}_k\rangle_{0,n} =& \ \frac{1}{n}\int_{-1}^1 \left(-\frac{1}{n-1}(1-t^2)^{\frac{n-1}{2}}\overline{f}_k'(t)^2 + (1-t^2)^{\frac{n-3}{2}}\overline{f}_k(t)^2\right)dt  \\
			=& \ 1 -\frac{1}{n}\int_{-1}^1 \frac{1}{n-1}(1-t^2)^{\frac{n-1}{2}}\overline{f}_k'(t)^2 dt.
		\end{align*}
		Consequently the sequence $(\overline{f}_k)_{k\in\N}$ is bounded in $H_0$ and hence there is a weakly convergent subsequence. Assume without loss of generality $\overline{f}_k\rightharpoonup \overline{f}$ with $\overline{f}\in H_0$. By Lemma \ref{lemma_compact_embedding}(iii) then $\overline{f}_k\to\overline{f}$ in $L^2([-1,1],w_{0,n}(t)dt)$. By continuity $\overline{f}$ obviously has the same parity and orthogonality properties as the $\overline{f}_k$. By the lower semi-continuity of the norm with respect to weak convergence
		\begin{align*}
			\limsup\limits_{k\to\infty} \langle L_{0,0}\overline{f}_k,\overline{f}_k\rangle_{0,n,0} \leq \langle L_{0,0}\overline{f},\overline{f}\rangle_{0,n,0}
		\end{align*}
		and hence $\overline{f}$ is indeed a maximizer.
		
		By the standard theory of variation such a maximizer is a weak solution to
		\begin{align*}
			L_{0,0}\overline{f}=\mu \overline{f}
		\end{align*}
		with boundary conditions (\ref{boundary_conditions_m=0}). Those solutions are well known to be the Gegenbauer polynomials $C^{\left(\frac{n-2}{2}\right)}_k$ for $k\in\N_0$ where $\mu_k=1-\frac{k(k+n-2)}{n-1}$ is the respective eigenvalue. Each $C^{\left(\frac{n-2}{2}\right)}_k$ is a polynomial of degree $k$. If $k$ is even, then $C^{\left(\frac{n-2}{2}\right)}_k$ is even and if $k$ is odd then $C^{\left(\frac{n-2}{2}\right)}_k$ is odd. As $C^{\left(\frac{n-2}{2}\right)}_0(t)=1$ and $C^{\left(\frac{n-2}{2}\right)}_1(t)=(n-2)t$, the maximizers in (\ref{definition_F(s)}) are given by $C^{\left(\frac{n-2}{2}\right)}_2$ as well as $C^{\left(\frac{n-2}{2}\right)}_3$ and therefore
		\begin{align*}
			F(0)= \mu_2 - \mu_3 > 0.
		\end{align*}
		Consequently, the continuity of $F$ implies that there is some $s\in(0,1)$ for which $F(s)=0$. As $\mathcal{A}_2\eta$, $\mathcal{A}_1\eta$ and $\eta$ are all bounded from above and from below by some positive constant, the same argument as for the $s=0$ case implies, that there exist $\overline{f}_{\text{even}}\in H_0$ even and $\overline{f}_{\text{odd}}\in H_0$ odd such that
		\begin{align*}
			L_{0,s}\overline{f}_{\text{even}}=\mu_{\text{even}}\overline{f}_{\text{even}} \quad \text{ as well as} \quad L_{0,s}\overline{f}_{\text{odd}}=\mu_{\text{odd}}\overline{f}_{\text{odd}}
		\end{align*}
		with boundary conditions (\ref{boundary_conditions_m=0}) where
		\begin{align*}
			\mu_{\text{even}}=\sup_{\substack{0\neq\overline{f}\in H_0 \text{ even}\\ \overline{f}\perp \eta_s}} \frac{\langle L_{0,s}\overline{f},\overline{f}\rangle_{0,n,s}}{\langle \overline{f},\overline{f}\rangle_{0,n,s}} \quad \text{ as well as} \quad \mu_{\text{odd}} = \sup_{\substack{0\neq\overline{f}\in H_0 \text{ odd}\\ \overline{f}\perp t}} \frac{\langle L_{0,s}\overline{f},\overline{f}\rangle_{0,n,s}}{\langle \overline{f},\overline{f}\rangle_{0,n,s}}.
		\end{align*}
		On the other hand, by definition, $F(s)=0$ then implies $\mu_{\text{even}}=\mu_{\text{odd}}$ and hence $\overline{f}_{\text{even}}$ and $\overline{f}_{\text{odd}}$ are two linearly independent solutions to (\ref{sturm_liouville}) for the same $\lambda$ in contradiction to Lemma \ref{lemma_sigle_ev}.	
	\end{proof}

	We can now deduce the behavior of $\mathcal{D}_m$ on $E_m$ for $m\geq1$ from the proven bounds on $\mathcal{D}_0$. To this end, we treat the cases $m=1$ and $m\geq 2$ separately. 
	
	\subsection{The operator on $E_1$}\label{section_m=1}
	
	Our aim is to prove the following theorem.
	
	\begin{theorem}\label{theorem_m=1}
		For all $\overline{f}\in H_1$ with boundary conditions (\ref{boundary_conditions_arbitrary_m}) for $m=1$ and $\langle \overline{f},1\rangle_{1,n}=0$ we have
		\begin{align}\label{inner_product_ineq_m=1}
			\langle L_1\overline{f},\overline{f}\rangle_{1,n}+\frac{1}{n-1}\langle \overline{f},\overline{f}\rangle_{1,n}\leq 0.
		\end{align}
	\end{theorem}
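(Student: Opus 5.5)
The plan is to deduce Theorem \ref{theorem_m=1} from Theorem \ref{theorem_m=0} with an intertwining (Darboux-type) transformation that turns eigenfunctions of $L_1$ into eigenfunctions of $L_0$. The algebra of this transformation and its boundary behaviour are not yet checked.

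\textbf{Step 1: reduction to an eigenvalue statement.} First observe that $k_1\equiv 0$ by Lemma \ref{lemma_explicit_op}. Hence the quadratic form of $L_1$ is a pure Dirichlet form:
\begin{align*}
\langle L_1\overline{f},\overline{f}\rangle_{1,n}=-\frac{1}{n(n-1)}\int_{-1}^1\mathcal{A}_1\eta(t)^{n-2}(1-t^2)^{\frac{n+1}{2}}\overline{f}'(t)^2\,dt\leq 0 .
\end{align*}
So the constant function is the top eigenfunction of $L_1$, with eigenvalue $0$; it corresponds to the linear functions $x_i$ in $E_1$. By Lemma \ref{lemma_compact_embedding}(iii) the operator $L_1$ with Neumann conditions (\ref{boundary_conditions_arbitrary_m}) has discrete spectrum. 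As in the proof of Theorem \ref{theorem_m=0}, the supremum of the Rayleigh quotient over $\overline{f}\perp 1$ is therefore attained by a weak eigenfunction $\overline{f}$ with eigenvalue $\lambda<0$. By density (Lemma \ref{lemma_compact_embedding}(i)) and after approximating $\eta$ by analytic functions, it suffices to show that no eigenvalue $\lambda\in\left(-\frac{1}{n-1},0\right)$ exists. By Remark \ref{remark_weak_solution} I may take $\overline{f}\in C^2(-1,1)$.

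\textbf{Step 2: the transformation.} Given $L_1\overline{f}=\lambda\overline{f}$, set
\begin{align*}
G(t)=\mathcal{A}_1\eta(t)^{n-2}(1-t^2)^{\frac{n+1}{2}}\overline{f}'(t).
\end{align*}
The eigenvalue equation reads
\begin{align*}
G'(t)=(n-1)\lambda\,\frac{\mathcal{A}_2\eta(t)\mathcal{A}_1\eta(t)^{n-2}}{\eta(t)}(1-t^2)^{\frac{n-1}{2}}\overline{f}(t).
\end{align*}
Solving this for $\overline{f}$ and differentiating once more gives a second order equation for $G$. I would look for a normalization $g=\phi\, G$ with a weight $\phi$ built from powers of $(1-t^2)$, $\mathcal{A}_1\eta$, $\mathcal{A}_2\eta$ and $\eta$. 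The aim is to choose $\phi$ so that this equation becomes exactly $L_0 g=\lambda g$.

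Two identities should drive the computation: $\frac{d}{dt}\sqrt{1-t^2}\mathcal{A}_1\eta=-t\mathcal{A}_2\eta/\sqrt{1-t^2}$, and $k_0=(n-2)\mathcal{A}_2\eta+\mathcal{A}_1\eta$. Geometrically this mirrors the fact that $x_i\overline{f}(x_n)$ arises from a zonal function by an infinitesimal rotation in the $(x_i,x_n)$-plane. A good sanity check is $\eta\equiv1$, where the map should reduce to the Gegenbauer relation $\frac{d}{dt}C_k^{(\alpha)}=2\alpha C_{k-1}^{(\alpha+1)}$ read backwards.

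\textbf{Step 3: conclusion via Theorem \ref{theorem_m=0}.} Next check that $g\in H_0$ satisfies the Neumann conditions (\ref{boundary_conditions_m=0}). For this, use the Frobenius description in the proof of Lemma \ref{lemma_sigle_ev}: $\overline{f}$ is regular at $\pm1$ (index $0$), so $G$ vanishes like $(1-t^2)^{\frac{n+1}{2}}$ there. Then check that $g\neq 0$, which holds because $\overline{f}$ is nonconstant.

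Since $\lambda\neq 0,1$ and $L_0$ is symmetric, $g$ is automatically orthogonal in $\langle\cdot,\cdot\rangle_{0,n}$ to the eigenfunctions $\eta$ (eigenvalue $1$) and $t$ (eigenvalue $0$). Theorem \ref{theorem_m=0} then gives $\lambda\leq-\frac{1}{n-1}$, which proves (\ref{inner_product_ineq_m=1}).

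\textbf{Main obstacle.} The hard part is Step 2: finding $\phi$ that makes the transformed equation exactly $L_0$ with the same $\lambda$, rather than $L_0$ plus a perturbing zero-order term. The fallback if an exact intertwining fails is an inequality version. Apply Theorem \ref{theorem_m=0} to the test function $g$ (or to $\sqrt{1-t^2}\,\overline{f}$, the restriction of $x_1\overline{f}(x_n)$ to a meridian). Then integrate by parts to compare the two Dirichlet forms, hoping that the leftover terms have a sign thanks to $\mathcal{A}_1\eta,\mathcal{A}_2\eta>0$.
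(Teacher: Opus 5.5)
\textbf{The gap: Step~2 cannot be completed, and not merely because the algebra is unchecked.}

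Already for $\eta\equiv 1$ your equation for $G$ reads $\frac{1}{n-1}\bigl((1-t^2)G''+(n-1)tG'\bigr)=\lambda G$. A multiplier $\phi$ only changes the first-order coefficient. Matching it with that of $L_0$ forces $\phi\propto(1-t^2)^{-\frac{n-1}{2}}$, i.e.\ $g=(1-t^2)\overline{f}'$. A direct computation then gives $L_0g-\frac{2}{1-t^2}g=\lambda g$, which is exactly the perturbing zero-order term you feared. For $\eta\equiv1$ the Gegenbauer relation read backwards is $g=\frac{(1-t^2)\overline{f}'-(n-1)t\overline{f}}{(n-1)(\lambda-1)}$. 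This involves $\overline f$ itself and a $\lambda$-dependent factor, so it is not of the form $\phi G$.

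More fundamentally, no transformation of any kind can do the job for general $\eta$. Steps~2--3 never use $\lambda\in\left(-\frac{1}{n-1},0\right)$, so they would prove that every nonzero Neumann eigenvalue of $L_1$ is an eigenvalue of $L_0$. That is false:
\begin{itemize}
\item For the ball, $x_n^2-\frac1n\in E_0$ and $x_1x_n\in E_1$ share the eigenvalue $-\frac{n+1}{n-1}$.
\item Take $h_K=1+\epsilon P_4(x_n)$, with $P_4$ the zonal harmonic of degree $4$. A first-order (Hellmann--Feynman) computation based on $\langle\mathcal{D}_Kf,f\rangle=V(f,f,K[n-2])$ gives eigenvalue shifts $\delta$ and $-\frac{2}{n}\delta$ with $\delta\neq 0$.
\item So for small $\epsilon\neq0$ the perturbed $E_1$-eigenvalue is not an eigenvalue of $L_0$, since the other eigenvalues of $L_0$ stay away from $-\frac{n+1}{n-1}$.
\end{itemize}
The coincidence for the ball comes from the rotation field $x_1\partial_n-x_n\partial_1$ commuting with $\mathcal{D}_K$; it persists for ellipsoids by linear invariance. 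A general body of revolution has no such symmetry, so the heuristic behind Step~2 breaks down.

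\textbf{The fallback is the paper's starting point, but it misses the key ingredients.} Lemma~\ref{lemma_orthogonal_space} with $\overline g=\sqrt{1-t^2}\,\overline f$ yields the $L_0$-form of $\overline g$ plus the term $-\frac{n-2}{n(n-1)}\int_{-1}^1\mathcal{A}_2\eta\,\mathcal{A}_1\eta^{n-3}(1-t^2)^{\frac{n-3}{2}}\overline f^2\,dt\le 0$. The sign of this leftover is not the issue. The issue is that $\overline g$ is not orthogonal to $t$ and $\eta$, so Theorem~\ref{theorem_m=0} does not apply to it. The paper splits by parity.

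\emph{Odd $\overline f$.} Here $\overline g\perp\eta$ automatically. One projects off $t$, and the positive contribution $\frac{1}{n-1}c^2\langle t,t\rangle_{0,n}$ is absorbed by the leftover term. This uses Cauchy--Schwarz together with $\eta-(1-t^2)\mathcal{A}_1\eta=t\int_0^t\mathcal{A}_2\eta\ge0$, which needs origin symmetry, and it requires $n\ge3$ (Lemmas~\ref{odd_case} and~\ref{lemma_CS_odd}).

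\emph{Even $\overline f$.} No direct estimate of this kind can work. The function $\overline f\equiv1$ violates the inequality, and $\langle\overline f,1\rangle_{1,n}=0$ does not translate into $\overline g\perp\eta$, because the two pairings carry different weights. Instead the paper deforms $\eta_s=1-s+s\eta$. It compares the top odd eigenvalue of $L_{1,s}$, already $\le-\frac{1}{n-1}$, with the top even eigenvalue orthogonal to constants. At $s=0$ these are the Gegenbauer values $\mu_1>\mu_2$. They can never cross, because Neumann eigenvalues are simple by Lemma~\ref{lemma_sigle_ev}, via the Frobenius indices.

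This parity split, with a projection estimate for odd $\overline f$ and a simplicity-based homotopy for even $\overline f$, is the idea your proposal is missing.
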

	
	We first prove some identity that allows us to reduce the behavior of the operator $L_m$ on $H_m$ to the behavior of the operator $L_0$ on $H_0$. We will use this reduction to prove Lemma \ref{odd_case} as well as Lemma \ref{even_case} and by means of that Theorem \ref{theorem_m=1} as well as Theorem \ref{theorem_m>1} respectively.
	
	\begin{lemma}\label{lemma_orthogonal_space}
		Suppose $\overline{f} \in H_m$ satisfies Neumann boundary conditions (\ref{boundary_conditions_arbitrary_m}). Define $\overline{g}:[-1,1]\to\R$ by $\overline{g}(t)=(1-t^2)^{\frac{m}{2}}\overline{f}(t)$. Then $\overline{g}\in H_0$ and satisfies the Neumann boundary conditions (\ref{boundary_conditions_m=0}). Furthermore
		\begin{align*}
			&\langle L_m \overline{f},\overline{f}\rangle_{m,n}+\frac{1}{n-1}\langle \overline{f},\overline{f}\rangle_{m,n} \\
			& \quad = \langle L_0\overline{g},\overline{g}\rangle_{0,n} + \frac{1}{n-1}\langle \overline{g},\overline{g}\rangle_{0,n} \\
			& \quad \quad -\frac{m(m+n-3)}{n(n-1)}\int_{-1}^1 \mathcal{A}_2\eta(t)\mathcal{A}_1\eta(t)^{n-3}(1-t^2)^{m+\frac{n-5}{2}} \overline{f}(t)^2dt.
		\end{align*}
	\end{lemma}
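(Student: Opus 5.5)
The plan is to verify the identity by direct computation in the weak (quadratic form) formulation of Remark \ref{remark_weak_solution}. First I will prove it for $\overline{f}\in C^2[-1,1]$ (or $C^2(-1,1)$ with the boundary behaviour of Lemma \ref{lemma_Cinfty}). Then I will pass to general $\overline{f}\in H_m$ by the density statement Lemma \ref{lemma_compact_embedding}(i). By Lemma \ref{lemma_explicit_op}, after integrating by parts against the Neumann condition,
\begin{align*}
\langle L_m\overline{f},\overline{f}\rangle_{m,n}=\frac{1}{n(n-1)}\int_{-1}^1\Big(-\mathcal{A}_1\eta^{n-2}(1-t^2)^{m+\frac{n-1}{2}}\overline{f}'^2+\mathcal{A}_1\eta^{n-3}(1-t^2)^{m+\frac{n-3}{2}}k_m\overline{f}^2\Big)dt .
\end{align*}
The zeroth order terms are immediate. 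Since $(1-t^2)^{\frac{n-3}{2}}\overline{g}^2=(1-t^2)^{m+\frac{n-3}{2}}\overline{f}^2$, we get $\langle\overline{g},\overline{g}\rangle_{0,n}=\langle\overline{f},\overline{f}\rangle_{m,n}$. The $k_0\overline{g}^2$ term equals the $k_0\overline{f}^2$ term with the $m$-weight. So only the gradient term and the difference $k_m-k_0$ need to be compared.

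For the gradient I expand $\overline{g}'=(1-t^2)^{\frac m2}\overline{f}'-mt(1-t^2)^{\frac m2-1}\overline{f}$. This gives
\begin{align*}
(1-t^2)^{\frac{n-1}{2}}\overline{g}'^2=(1-t^2)^{m+\frac{n-1}{2}}\overline{f}'^2-mt(1-t^2)^{m+\frac{n-3}{2}}(\overline{f}^2)'+m^2t^2(1-t^2)^{m+\frac{n-5}{2}}\overline{f}^2 .
\end{align*}
I multiply by $\mathcal{A}_1\eta^{n-2}$ and integrate the middle term by parts. The boundary term is $\mathcal{A}_1\eta^{n-2}\,t(1-t^2)^{m+\frac{n-3}{2}}\overline{f}^2\big|_{-1}^1$. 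It vanishes for $m\geq1$ by Lemma \ref{lemma_compact_embedding}(ii), and for $m=0$ there is nothing to do. Differentiating the weight uses $\frac{d}{dt}\mathcal{A}_1\eta=-t\eta''$ together with $(1-t^2)\eta''=\mathcal{A}_2\eta-\mathcal{A}_1\eta$. After this, every leftover term can be written as $\mathcal{A}_1\eta^{n-3}(1-t^2)^{m+\frac{n-5}{2}}\overline{f}^2$ times a combination of $\mathcal{A}_1\eta$, $\mathcal{A}_2\eta$, $t^2\mathcal{A}_1\eta$ and $t^2\mathcal{A}_2\eta$. The same rewriting applies to
\begin{align*}
(k_m-k_0)(1-t^2)=-m(n-2)(1-t^2)\mathcal{A}_2\eta-m^2(1-t^2)\mathcal{A}_1\eta-m(m-1)(\mathcal{A}_2\eta-\mathcal{A}_1\eta).
\end{align*}
Collecting coefficients, the $t^2$-terms and all $\mathcal{A}_1\eta$-terms should cancel. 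What remains should be exactly $-m(m+n-3)\mathcal{A}_2\eta\,\mathcal{A}_1\eta^{n-3}(1-t^2)^{m+\frac{n-5}{2}}\overline{f}^2$, which is the claimed correction term. This bookkeeping is routine but error-prone, and I will organize it by the four monomials listed above.

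Next I must show $\overline{g}\in H_0$ and that it satisfies (\ref{boundary_conditions_m=0}). The $L^2$ part is the equality of weights noted above. For the derivative, the expansion shows that $\int(1-t^2)^{\frac{n-1}{2}}\overline{g}'^2$ is controlled once $\int(1-t^2)^{m+\frac{n-5}{2}}\overline{f}^2<\infty$. This Hardy-type integrability near $\pm1$ is the main obstacle. My first attempt is to establish the identity for smooth $\overline{f}$ on intervals $[-1+\varepsilon,1-\varepsilon]$. There all the quantities other than this singular integral are bounded by $\norm{\overline{f}}_{H_m}$, while the singular term comes with a definite sign. Letting $\varepsilon\to0$ then bounds it via monotone convergence. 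Alternatively, I would use a one-dimensional Hardy inequality $\int(1-t^2)^{a-1}\overline{f}^2\lesssim\int(1-t^2)^{a+1}\overline{f}'^2+\int(1-t^2)^{a}\overline{f}^2$ with $a=m+\frac{n-3}{2}>0$, which is valid for $m\geq1$. Finally, the boundary condition for $\overline{g}$ follows from (\ref{strong_Neumann_bc}) for $\overline{f}$ combined with Lemma \ref{lemma_compact_embedding}(ii): $(1-t^2)^{\frac{n-1}{2}}\overline{g}'=(1-t^2)^{\frac{n-1-m}{2}}\big((1-t^2)^m\overline{f}'-mt(1-t^2)^{m-1}\overline{f}\big)\to0$. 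This gives (\ref{boundary_conditions_m=0}) in the weak form against any $u\in H_0$.
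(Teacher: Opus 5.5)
\textbf{Verdict.} The quadratic-form identity and the proof that $\overline g\in H_0$ are correct and follow the paper. The step showing that $\overline g$ satisfies the Neumann condition has a genuine, if localized, gap.

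Your derivation of the identity is the paper's own. You expand $\overline g'^2$, integrate the cross term by parts using the derivative of $\mathcal A_1\eta^{n-2}(1-t^2)^{m+\frac{n-3}{2}}$, discard the boundary term via Lemma \ref{lemma_compact_embedding}(ii), and compare $k_m-k_0$. The bookkeeping closes as you predict: the $\mathcal A_2\eta$-coefficients sum to $-m(n-2)-m(m-1)=-m(m+n-3)$, and the $\mathcal A_1\eta$-coefficients sum to $0$. Your truncation/sign argument for $\overline g\in H_0$ makes explicit what the paper compresses into ``especially this shows $\overline g\in H_0$''. After the integration by parts, the singular term has coefficient $-mt^2((n-2)\mathcal A_2\eta+(m-1)\mathcal A_1\eta)\le 0$ and sits on the same side as the nonnegative $\overline g'$-integral. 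Hence both are controlled by $\norm{\overline f}_{H_m}^2$; the Hardy route also works.

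\textbf{The gap: the boundary condition for $\overline g$.}

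\emph{First term.} Condition \eqref{strong_Neumann_bc} gives only $(1-t^2)^{m+\frac{n-1}{2}}\overline f'\to 0$. The first term of your factorization is $(1-t^2)^{\frac{m+n-1}{2}}\overline f'$, whose exponent is smaller by $\frac m2$, so its vanishing does not follow. The reason is that \eqref{strong_Neumann_bc} was obtained by testing with bounded $u$, which discards exactly the information you need.

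\emph{Second term.} Lemma \ref{lemma_compact_embedding}(ii) gives $(1-t^2)^{\frac m2+\frac{n-3}{4}}\overline f\to 0$. This controls $(1-t^2)^{\frac{m+n-3}{2}}\overline f$ only when $n\ge 3$. For $n=2$, $m=1$, $\overline f\equiv 1$ one gets $(1-t^2)^{\frac12}\overline g'=-t$, and \eqref{boundary_conditions_m=0} fails for $u\equiv 1$. So $n\geq 3$ is genuinely needed for this part of the statement.

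\emph{Passing to the weak form.} Even a pointwise limit $(1-t^2)^{\frac{n-1}{2}}\overline g'\to 0$ would not give \eqref{boundary_conditions_m=0} for all $u\in H_0$. For $n\ge 3$, elements of $H_0$ can blow up at $\pm 1$: up to order $(1-t)^{-\frac{n-3}{4}}$ for $n\ge 4$, and logarithmically for $n=3$. A rate is needed.

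\textbf{A fix.} Test the weak condition against $u$ directly, as the paper does.
For $u\in H_0$, write $(1-t^2)^{\frac{m+n-1}{2}}\overline f'u=(1-t^2)^{m+\frac{n-1}{2}}\overline f'\,\tilde u$ with $\tilde u=(1-t^2)^{-\frac m2}u$.
For $n\ge 4$, $\tilde u\in H_m$ by your own Hardy inequality with $a=\frac{n-3}{2}>0$, so \eqref{boundary_conditions_arbitrary_m} for $\overline f$ kills this term.
The other term equals $mt\,((1-t^2)^{m+\frac{n-3}{2}}\overline f^2)^{1/2}((1-t^2)^{\frac{n-3}{2}}u^2)^{1/2}$. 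It tends to $0$ because the first factor vanishes by Lemma \ref{lemma_compact_embedding}(ii) and the second is bounded for $n\ge 4$.
For $n=3$, or simply for the functions to which the lemma is applied later (those produced by Lemma \ref{lemma_Cinfty}), the bounds on $(1-t^2)^{\frac{m-1}{2}}\overline f$ and $(1-t^2)^{\frac{m+1}{2}}\overline f'$ give $(1-t^2)^{\frac{n-1}{2}}\overline g'=O((1-t^2)^{\frac{n-2}{2}})$. This beats the growth of any $u\in H_0$ when $n\ge 3$.

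The quadratic-form identity itself is unaffected by this issue.
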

	
	\begin{proof}
		Clearly $\overline{g}\in L^2([-1,1],(1-t^2)^{\frac{n-3}{2}}dt)$ by definition. Integration by parts and using 
		\begin{align*}
			&\frac{d}{dt}\mathcal{A}_1\eta(t)^{n-2}(1-t^2)^{m+\frac{n-3}{2}}\\
			& \quad =-t\mathcal{A}_1\eta(t)^{n-3}(1-t^2)^{m+\frac{n-5}{2}}\left((n-2)\mathcal{A}_2\eta(t)+(2m-1)\mathcal{A}_1\eta(t)\right)
		\end{align*}
		yields 	
		\begin{align*}
			&\int_{-1} ^1 \mathcal{A}_1\eta(t)^{n-2}(1-t^2)^{\frac{n-1}{2}}\overline{g}'(t)^2dt \\
			&\quad = \int_{-1} ^1 \mathcal{A}_1\eta(t)^{n-2}(1-t^2)^{m+\frac{n-5}{2}}\left((1-t^2)^2\overline{f}'(t)^2-2mt(1-t^2)\overline{f}'(t)\overline{f}(t)+m^2t^2\overline{f}(t)^2\right)dt\\
			& \quad = \int_{-1} ^1 \mathcal{A}_1\eta(t)^{n-2}(1-t^2)^{m+\frac{n-5}{2}}\bigg((1-t^2)^2\overline{f}'(t)^2-m(1-t^2)\frac{d}{dt}t\overline{f}(t)^2\\
			& \hspace{9cm} +\left(m^2t^2+m(1-t^2)\right)\overline{f}(t)^2\bigg)dt \\
			& \quad = \int_{-1}^1 \mathcal{A}_1\eta(t)^{n-2}(1-t^2)^{m+\frac{n-1}{2}}\overline{f}'(t)^2 dt \\
			& \quad \quad -m\int_{-1}^1 \mathcal{A}_1\eta(t)^{n-3} (1-t^2)^{m+\frac{n-5}{2}} t^2 \left((n-2)\mathcal{A}_2\eta(t)+(m-1)\mathcal{A}_1\eta(t)\right) \overline{f}(t)^2dt \\
			& \quad \quad + m \int_{-1} ^1\mathcal{A}_1\eta(t)^{n-2}(1-t^2)^{m+\frac{n-3}{2}}\overline{f}(t)^2dt.
		\end{align*} 
		Here we used that $(1-t^2)^{m+\frac{n-3}{2}}\overline{f}(t)^2\big|_{t=-1}^1=0$ by Lemma \ref{lemma_compact_embedding}(ii) and hence the boundary terms from the integration by parts vanish. Especially this shows $\overline{g}\in H_0$.
		
		For the boundary conditions observe that for any $u\in H_0\subset H_m$ by the boundary conditions on $\overline{f}$ and Lemma \ref{lemma_compact_embedding}(ii) we have
		\begin{align*}
			(1-t^2)^{\frac{n-1}{2}}\overline{g}(t)u(t)\big|_{t=-1}^1=& \ (1-t^2)^{\frac{m+n-1}{2}}\overline{f}'(t)u(t)\big|_{t=-1}^1 - mt(1-t^2)^{\frac{m+n-3}{2}}f(t)u(t)\big|_{t=-1}^1 \\
			=& \ -mt \left((1-t^2)^{m+\frac{n-3}{2}}\overline{f}(t)^2(1-t^2)^{\frac{n-3}{2}}u(t)^2\right)^{\frac{1}{2}}\big|_{t=-1}^1 \\
			=& \ 0.
		\end{align*}
		
		With the above, a short calculation shows that
		\begin{align*}
			& n(n-1)\langle L_0\overline{g},\overline{g}\rangle_{0,n} + n\langle \overline{g},\overline{g}\rangle_{0,n} \\
			& \quad = -\int_{-1} ^1 \mathcal{A}_1\eta(t)^{n-2}(1-t^2)^{\frac{n-1}{2}}\overline{g}'(t)^2 dt \\
			& \hspace{2cm} + \int_{-1}^1 \mathcal{A}_1\eta(t)^{n-3}(1-t^2)^{m+\frac{n-3}{2}}\left(k_0(t)+\frac{\mathcal{A}_2\eta(t)\mathcal{A}_1\eta(t)}{\eta(t)}\right)\overline{f}(t)^2 dt \\
%			=& -\int_{-1}^1 \mathcal{A}_1\eta(t)^{n-2}(1-t^2)^{m+\frac{n-1}{2}}\overline{f}'(t)^2 dt \\
%			& \quad +m\int_{-1}^1 \mathcal{A}_1\eta(t)^{n-3} (1-t^2)^{m+\frac{n-5}{2}} t^2 \left((n-2)\mathcal{A}_2\eta(t)+(m-1)\mathcal{A}_1\eta(t)\right) \overline{f}(t)^2dt \\
%			& \quad - m \int_{-1} ^1\mathcal{A}_1\eta(t)^{n-2}(1-t^2)^{m+\frac{n-3}{2}}\overline{f}(t)^2dt \\
%			&\quad + \int_{-1}^1 \mathcal{A}_1\eta(t)^{n-3}(1-t^2)^{m+\frac{n-3}{2}}\left((n-2)\mathcal{A}_2\eta(t)+\mathcal{A}_1\eta(t)+\frac{\mathcal{A}_2\eta(t)\mathcal{A}_1\eta(t)}{\eta(t)}\right)\overline{f}(t)^2 dt \\
%			=& -\int_{-1}^1 \mathcal{A}_1\eta(t)^{n-2}(1-t^2)^{m+\frac{n-1}{2}}\overline{f}'(t)^2 dt \\
%			& \quad - \int_{-1}^1 \mathcal{A}_1\eta(t)^{n-3}(1-t^2)^{m+\frac{n-3}{2}}\left((m-1)(n-2)\mathcal{A}_2\eta(t)+(m^2-1)\mathcal{A}_1\eta(t)\right)\overline{f}(t)^2 dt \\
%			& \quad + \int_{-1}^1\mathcal{A}_1\eta(t)^{n-3}(1-t^2)^{m+\frac{n-3}{2}} \left(-m(m-1)\eta''(t)+\frac{\mathcal{A}_2\eta(t)\mathcal{A}_1\eta(t)}{\eta(t)}\right)\overline{f}(t)^2dt\\
%			& \quad + m(m+n-3)\int_{-1}^1 \mathcal{A}_2\eta(t)\mathcal{A}_1\eta(t)^{n-3}(1-t^2)^{m+\frac{n-5}{2}}\overline{f}(t)^2 dt \\
			& \quad = n(n-1)\langle L_m\overline{f},\overline{f}\rangle_{m,n}  + n\langle \overline{f},\overline{f}\rangle_{m,n}\\
			& \hspace{2cm} + m(m+n-3)\int_{-1}^1 \mathcal{A}_2\eta(t)\mathcal{A}_1\eta(t)^{n-3}(1-t^2)^{m+\frac{n-5}{2}}\overline{f}(t)^2dt.
		\end{align*}
		This proves the statement.
	\end{proof}

	The following two Lemmas prove Theorem \ref{theorem_m=1} for eigenvalues with odd eigenfunctions. We will then use a homotopy argument as in Section \ref{section_m=0} to prove Theorem \ref{theorem_m=1}.
	
	\begin{lemma}\label{odd_case}
		If $\overline{f} \in H_1$ is odd and satisfies (\ref{boundary_conditions_arbitrary_m}) for $m=1$ then
		\begin{align}\label{ineq_odd_case}
			& \langle L_1\overline{f},\overline{f}\rangle_{1,n} + \frac{1}{n-1}\langle \overline{f},\overline{f}\rangle_{1,n} \nonumber \\
			& \quad \leq \frac{1}{n(n-1)}\frac{\left(\int_{-1}^1\frac{\mathcal{A}_2\eta(t)\mathcal{A}_1\eta(t)^{n-2}}{\eta(t)}(1-t^2)^{\frac{n-2}{2}}t\overline{f}(t)dt\right)^2}{\int_{-1}^1\frac{\mathcal{A}_2\eta(t)\mathcal{A}_1\eta(t)^{n-2}}{\eta(t)}(1-t^2)^{\frac{n-3}{2}}t^2dt} \\
			& \quad \quad  -\frac{n-2}{n(n-1)} \int_{-1}^1 \mathcal{A}_2\eta(t)\mathcal{A}_1\eta(t)^{n-3} (1-t^2)^{\frac{n-3}{2}} \overline{f}(t)^2dt. \nonumber
		\end{align}
	\end{lemma}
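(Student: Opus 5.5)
The plan is to apply Lemma \ref{lemma_orthogonal_space} with $m=1$ and then to use Lemma \ref{lemma_m=0_even} on a projected version of $\overline{g}$. Set $\overline{g}(t)=(1-t^2)^{\frac12}\overline{f}(t)$. By Lemma \ref{lemma_orthogonal_space}, $\overline{g}\in H_0$ satisfies (\ref{boundary_conditions_m=0}), and
\begin{align*}
	\langle L_1\overline{f},\overline{f}\rangle_{1,n}+\tfrac{1}{n-1}\langle \overline{f},\overline{f}\rangle_{1,n}
	= \langle L_0\overline{g},\overline{g}\rangle_{0,n}+\tfrac{1}{n-1}\langle \overline{g},\overline{g}\rangle_{0,n}
	-\tfrac{n-2}{n(n-1)}\int_{-1}^1 \mathcal{A}_2\eta\,\mathcal{A}_1\eta^{n-3}(1-t^2)^{\frac{n-3}{2}}\overline{f}^2dt ,
\end{align*}
since $m(m+n-3)=n-2$ for $m=1$. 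The last term is exactly the second term on the right of (\ref{ineq_odd_case}). It therefore suffices to bound the $E_0$-quantity for $\overline{g}$ by the first term.

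Because $\overline{f}$ is odd, $\overline{g}$ is odd. Lemma \ref{lemma_m=0_even} treats even functions, though, so we need more than a direct quotation. The observation I would use is that in the weak form (\ref{weak_formulation}) all coefficients depend on $\eta$, which is even because $K$ is origin symmetric. So the quadratic form $Q(\overline{u})=\langle L_0\overline{u},\overline{u}\rangle_{0,n}+\frac{1}{n-1}\langle \overline{u},\overline{u}\rangle_{0,n}$ is invariant under $t\mapsto -t$, and even and odd parts are $Q$-orthogonal. This alone does not give the odd case, which is precisely what Theorem \ref{theorem_m=0} is for. If Theorem \ref{theorem_m=0} is available (it is proven before this lemma), I will instead use it directly. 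The odd function $\overline{g}$ is automatically orthogonal to $\eta$, but not to $t$. So I decompose $\overline{g}=\overline{g}_0+c\,t$ with $c=\langle \overline{g},t\rangle_{0,n}/\langle t,t\rangle_{0,n}$ and $\overline{g}_0\perp t,\eta$, noting that $t$ also satisfies the Neumann condition.

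Next I expand $Q(\overline{g}_0+ct)$. Since linear functions are eigenfunctions of $\mathcal{D}_K$ with eigenvalue $0$ (the computation $D^2\langle x,e_n\rangle=0$ gives $L_0 t=0$ directly from Lemma \ref{lemma_explicit_op} with $k_0$), and $L_0$ is symmetric, the cross term $\langle L_0 \overline{g}_0,t\rangle$ vanishes, as does $\langle\overline{g}_0,t\rangle$. Hence
\begin{align*}
	Q(\overline{g})=Q(\overline{g}_0)+\tfrac{c^2}{n-1}\langle t,t\rangle_{0,n}\le \tfrac{1}{n-1}\frac{\langle \overline{g},t\rangle_{0,n}^2}{\langle t,t\rangle_{0,n}},
\end{align*}
using Theorem \ref{theorem_m=0} for $\overline{g}_0$. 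Writing out $\langle\cdot,\cdot\rangle_{0,n}$ with the weight $w_{0,n}=\frac{\mathcal{A}_2\eta\mathcal{A}_1\eta^{n-2}}{n\eta}(1-t^2)^{\frac{n-3}{2}}$ and $\overline{g}=(1-t^2)^{1/2}\overline{f}$ gives exactly the first term in (\ref{ineq_odd_case}), with the factor $\frac{1}{n(n-1)}$ coming from one $\frac1n$ of the weight surviving in the ratio.

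The main obstacle is justifying $L_0t=0$ weakly together with the boundary behaviour. Here $t\in C^2[-1,1]$, so I would verify it via Lemma \ref{lemma_explicit_op} or simply via $\mathcal{D}_K\langle\cdot,e_n\rangle=0$. I also need the weak symmetry $\langle L_0\overline{g}_0,t\rangle=\langle \overline{g}_0,L_0t\rangle$, which follows from (\ref{weak_formulation}) polarized, since both functions satisfy (\ref{boundary_conditions_m=0}). An approximation by $C^2$ functions via Lemma \ref{lemma_compact_embedding}(i) handles any regularity issue.
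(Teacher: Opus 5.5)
Your proposal is correct and follows essentially the paper's own argument: write $\overline{g}=(1-t^2)^{1/2}\overline{f}=c\,t+\overline{g}_0$ with $\overline{g}_0\perp t$ (and $\overline{g}_0\perp\eta$ by parity), apply Lemma \ref{lemma_orthogonal_space} with $m=1$, bound $\overline{g}_0$ by Theorem \ref{theorem_m=0}, and use $L_0t=0$ to remove the cross term. By keeping $c$ explicit instead of scaling it to $1$ as the paper does, you also cover the case $\langle \overline{g},t\rangle_{0,n}=0$, which the paper's normalization passes over silently.
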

	
	\begin{proof}
		As (\ref{ineq_odd_case}) is invariant under scaling of $\overline{f}$ we can assume without loss of generality $\overline{f}(t)=(1-t^2)^{-\frac{1}{2}}\left(t+\overline{g}^\perp(t)\right)$ with $\langle \overline{g}^\perp,t\rangle_{0,n}=0$.
		
		By orthogonality, we have
		\begin{align*}
			\langle t+\overline{g}^\perp,t+\overline{g}\rangle_{0,n}=& \langle t,t\rangle_{0,n} + \langle \overline{g}^\perp,\overline{g}^\perp\rangle_{0,n}.
		\end{align*}
		This together with a short calculation using Lemma \ref{lemma_orthogonal_space} and Theorem \ref{theorem_m=0} leaves us with
		\begin{align*}
			& \langle L_1\overline{f},\overline{f}\rangle_{1,n} + \frac{1}{n-1}\langle \overline{f},\overline{f}\rangle_{1,n} \\ 
			& \quad = \ -\frac{1}{n(n-1)}\int_{-1}^1 \mathcal{A}_1\eta(t)^{n-2}(1-t^2)^{\frac{n-1}{2}}\left(1+{\overline{g}^\perp}'(t)\right)^2dt \\
			& \quad \quad + \frac{1}{n(n-1)}\mathcal{A}_1\eta(t)^{n-3}(1-t^2)^{\frac{n-3}{2}}k_0(t)\left(t+\overline{g}^\perp(t)\right)^2dt \\
			& \quad \quad + \frac{1}{n-1}\left(\langle t,t\rangle_{0,n} + \langle \overline{g}^\perp,\overline{g}^\perp\rangle_{0,n}\right) \\
			& \quad \quad -\frac{n-2}{n(n-1)} \int_{-1}^1 \mathcal{A}_2\eta(t)\mathcal{A}_1\eta(t)^{n-3} (1-t^2)^{\frac{n-3}{2}} \overline{f}(t)^2dt \\
			& \quad \leq \frac{1}{n(n-1)}\int_{-1}^1\frac{\mathcal{A}_2\eta(t)\mathcal{A}_1\eta(t)^{n-2}}{\eta(t)}(1-t^2)^{\frac{n-3}{2}}t^2dt \\
			& \quad \quad -\frac{n-2}{n(n-1)} \int_{-1}^1 \mathcal{A}_2\eta(t)\mathcal{A}_1\eta(t)^{n-3} (1-t^2)^{\frac{n-3}{2}} \overline{f}(t)^2dt. 
		\end{align*}
		Now by the assumption on $\overline{f}$ we have
		\begin{align*}
			&\int_{-1}^1\frac{\mathcal{A}_2\eta(t)\mathcal{A}_1\eta(t)^{n-2}}{\eta(t)}(1-t^2)^{\frac{n-3}{2}}t^2dt	\\
			& \quad = \int_{-1}^1\frac{\mathcal{A}_2\eta(t)\mathcal{A}_1\eta(t)^{n-2}}{\eta(t)}(1-t^2)^{\frac{n-2}{2}}t\overline{f}(t)dt,
		\end{align*}
		which proves the statement.
	\end{proof}

	\begin{remark}\label{remark_n=2}
		Clearly, for $n=2$ the right hand side of the inequality (\ref{ineq_odd_case}) is positive for generic odd $\overline{f}\neq 0$. Therefore Theorem \ref{theorem_m=1} does not follow in the case $n=2$, which is why we need to assume $n\geq 3$ for the present section.  Nonetheless, Theorem \ref{theorem_m=1} holds for $n=2$ as the known logarithmic Brunn-Minkowski inequality in $\R^2$ implies the local one. Also van Handel's result for zonoids in \cite{vanhandel} shows the local logarithmic Brunn-Minkowski inequality in $\R^2$ as every convex body in $\R^2$ is a zonoid. This shows that our bound is not sharp when $n=2$. The reason for this is that we cannot chose $\overline{g}^\perp=0$ in Lemma \ref{odd_case} for $n=2$ as for $\overline{f}:(-1,1)\to \R$ given by $\overline{f}(t)=t(1-t^2)^{-\frac{1}{2}}$ we have $\overline{f}\notin H_1$ whenever $n\in\{2,3\}$. 
	\end{remark}
	
	In order to prove Theorem \ref{theorem_m=1} for odd $\overline{f}$, we need to prove the following.
	
	\begin{lemma}\label{lemma_CS_odd}
		Assume $n\geq 3$. Then for any $0\neq\overline{f}\in L^2([-1,1],w_{1,n}(t)dt)$ we have	
		\begin{align*}
			&\left(\int_{-1}^1\frac{\mathcal{A}_2\eta(t)\mathcal{A}_1\eta(t)^{n-2}}{\eta(t)}(1-t^2)^{\frac{n-2}{2}}t\overline{f}(t)dt\right)^2 \\
			& \quad \leq  (n-2) \left(\int_{-1}^1 \mathcal{A}_2\eta(t)\mathcal{A}_1\eta(t)^{n-3} (1-t^2)^{\frac{n-3}{2}} \overline{f}(t)^2dt\right)\\
			& \hspace{5cm} \left(\int_{-1}^1\frac{\mathcal{A}_2\eta(t)\mathcal{A}_1\eta(t)^{n-2}}{\eta(t)}(1-t^2)^{\frac{n-3}{2}}t^2dt\right).
		\end{align*}
	\end{lemma}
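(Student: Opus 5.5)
The plan is a single weighted Cauchy--Schwarz step followed by a one-dimensional inequality for the weights alone. Write the integrand on the left as a product:
\begin{align*}
\frac{\mathcal{A}_2\eta\,\mathcal{A}_1\eta^{n-2}}{\eta}(1-t^2)^{\frac{n-2}{2}}t\overline{f}
=\Big(\big(\mathcal{A}_2\eta\,\mathcal{A}_1\eta^{n-3}\big)^{\frac12}(1-t^2)^{\frac{n-3}{4}}\overline{f}\Big)\cdot\Big(\big(\mathcal{A}_2\eta\,\mathcal{A}_1\eta^{n-3}\big)^{\frac12}\frac{\mathcal{A}_1\eta}{\eta}(1-t^2)^{\frac{n-1}{4}}t\Big).
\end{align*}
Cauchy--Schwarz then bounds the left-hand side by $\int_{-1}^1 \mathcal{A}_2\eta\,\mathcal{A}_1\eta^{n-3}(1-t^2)^{\frac{n-3}{2}}\overline{f}^2dt$ times
\begin{align*}
I:=\int_{-1}^1\frac{\mathcal{A}_2\eta\,\mathcal{A}_1\eta^{n-1}}{\eta^2}(1-t^2)^{\frac{n-1}{2}}t^2dt .
\end{align*}
All integrals are finite because $\mathcal{A}_1\eta,\mathcal{A}_2\eta,\eta$ are bounded above and below by positive constants and $\overline{f}\in L^2([-1,1],w_{1,n}(t)dt)$. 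So the lemma reduces to an inequality that no longer involves $\overline{f}$:
\begin{align*}
I\le (n-2)\int_{-1}^1\frac{\mathcal{A}_2\eta\,\mathcal{A}_1\eta^{n-2}}{\eta}(1-t^2)^{\frac{n-3}{2}}t^2dt .
\end{align*}
Equivalently, $\int_{-1}^1 \frac{\mathcal{A}_2\eta\,\mathcal{A}_1\eta^{n-2}}{\eta}(1-t^2)^{\frac{n-3}{2}}t^2\big(\frac{(1-t^2)\mathcal{A}_1\eta}{\eta}-(n-2)\big)dt\le 0$.

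To prove this reduced inequality I would use the geometry of $K$. The radius of the parallel circle of $\partial K$ with outer normal at height $t$ is $r(t)=\sqrt{1-t^2}\mathcal{A}_1\eta(t)$, and its height is $z(t)=(1-t^2)\eta'(t)+t\eta(t)$. These satisfy $\eta=\sqrt{1-t^2}\,r+tz$, and the relation $r'=-t\mathcal{A}_2\eta/\sqrt{1-t^2}$ recorded after the definition of $\mathcal{A}_1,\mathcal{A}_2$. Hence
\begin{align*}
\frac{(1-t^2)\mathcal{A}_1\eta}{\eta}=1-\frac{tz}{\eta}.
\end{align*}
If $tz\ge0$ this is at most $1\le n-2$, and the reduced inequality holds pointwise. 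Origin symmetry of $K$ makes $\eta$ even and $tz$ even, but $tz$ may still be negative somewhere. So I expect a pointwise bound to fail in general, with $n=3$ the critical case.

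The fallback, which I expect to be the main obstacle, is integration by parts. The factor $\mathcal{A}_2\eta\, t(1-t^2)^{-1/2}$ appearing in $I$ equals $-r'$, so I would rewrite $I$ as
\begin{align*}
I=-\int_{-1}^1 r'\,\frac{\mathcal{A}_1\eta^{n-1}}{\eta^2}\,t\,(1-t^2)^{\frac n2}\,dt
\end{align*}
and move the derivative onto the remaining factors. Writing $\mathcal{A}_1\eta^{n-1}(1-t^2)^{\frac{n-1}{2}}=r^{n-1}$, the integral becomes $-\frac1n\int (r^n)'\,t\sqrt{1-t^2}\,\eta^{-2}dt$. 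Integrating by parts, with boundary terms vanishing because of the factor $\sqrt{1-t^2}$, gives $\frac1n\int r^n\frac{d}{dt}\big(t\sqrt{1-t^2}\eta^{-2}\big)dt$. Doing the same manipulation on the right-hand integral, I would compare the two expressions using $\eta=\sqrt{1-t^2}r+tz$, $z'=\sqrt{1-t^2}\mathcal{A}_2\eta$, and the evenness of $\eta$. The aim is a representation of the difference of the two sides of the reduced inequality as an integral of a manifestly nonpositive quantity.

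If this comparison does not close, the alternative is to redistribute the weights in the Cauchy--Schwarz split, e.g.\ by moving a power of $\mathcal{A}_1\eta/\eta$ between the two factors, so that the resulting weight inequality becomes pointwise. Remark \ref{remark_n=2} shows the factor $n-2$ is essential, so any such choice must genuinely use $n\ge3$.
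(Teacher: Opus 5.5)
Your Cauchy--Schwarz split is exactly the paper's, and your reduction to the $\overline{f}$-free inequality
\begin{align*}
I\le (n-2)\int_{-1}^1\frac{\mathcal{A}_2\eta\,\mathcal{A}_1\eta^{n-2}}{\eta}(1-t^2)^{\frac{n-3}{2}}t^2\,dt
\end{align*}
is correct. The gap is the next step. You reject the pointwise argument because $tz$ ``may still be negative somewhere''. The integration-by-parts comparison you put in its place is never carried out, so as written the reduced inequality is not proved.

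The rejection is mistaken. Differentiating $z=(1-t^2)\eta'+t\eta$ gives $z'=(1-t^2)\eta''-t\eta'+\eta=\mathcal{A}_2\eta>0$. The factor $\sqrt{1-t^2}$ in your formula for $z'$ is spurious, though only the sign matters. Evenness of $\eta$ gives $\eta'(0)=0$, i.e.\ $z(0)=0$. Hence $z$ is odd and strictly increasing, so $z(t)=\int_0^t\mathcal{A}_2\eta(s)\,ds$ has the sign of $t$, and
\begin{align*}
\eta(t)-(1-t^2)\mathcal{A}_1\eta(t)=tz(t)=t\int_0^t\mathcal{A}_2\eta(s)\,ds\ge 0,
\end{align*}
which is exactly \eqref{RK-HK_ineq}. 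The evenness of $tz$ that you noted is not the relevant information. What you missed is that origin symmetry pins $z(0)=0$, while $\mathcal{A}_2\eta>0$ makes $z$ monotone.

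With this, $\frac{(1-t^2)\mathcal{A}_1\eta}{\eta}\le 1$ pointwise. So $I$ is bounded by the right-hand weight integral with constant $1$, and $n\ge 3$ enters only through $1\le n-2$. No integration by parts or reweighting is needed; this is precisely the paper's proof.

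One minor point: your claim that all integrals are finite for $\overline{f}\in L^2([-1,1],w_{1,n}(t)dt)$ is not right. The weight $w_{1,n}$ carries $(1-t^2)^{\frac{n-1}{2}}$, while the first factor on the right carries $(1-t^2)^{\frac{n-3}{2}}$, so that integral may diverge. This is harmless. Splitting $(1-t^2)^{\frac{n-2}{2}}=(1-t^2)^{\frac{n-1}{4}}(1-t^2)^{\frac{n-3}{4}}$ and applying Cauchy--Schwarz shows the left-hand side is always finite, so the inequality is trivial when that integral is infinite.
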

	
	\begin{proof}
		By the Cauchy-Schwarz inequality we have
		\begin{align*}
			&\left(\int_{-1}^1 \frac{\mathcal{A}_2\eta(t)\mathcal{A}_1\eta(t)^{n-2}}{\eta(t)}(1-t^2)^{\frac{n-2}{2}} t\overline{f}(t)dt\right)^2  \\
			& \quad \leq \left(\int_{-1}^1 \mathcal{A}_2\eta(t)\mathcal{A}_1\eta(t)^{n-3}(1-t^2)^{\frac{n-3}{2}} \overline{f}(t)^2dt\right)\\
			& \hspace{5cm}\left(\int_{-1}^1 \frac{\mathcal{A}_2\eta(t)\mathcal{A}_1\eta(t)^{n-1}}{\eta(t)^2}(1-t^2)^{\frac{n-1}{2}}t^2dt\right).
		\end{align*}
		Now an easy calculation using the fact that $\eta$ is even shows that for all $t\in[-1,1]$ we have
		\begin{align}\label{RK-HK_ineq}
			\eta(t)-\mathcal{A}_1\eta(t)(1-t^2)=t\int_0^t\mathcal{A}_2\eta(s)ds\geq 0.
		\end{align}
		With that we get
		\begin{align*}
			\int_{-1}^1 \frac{\mathcal{A}_2\eta(t)\mathcal{A}_1\eta(t)^{n-1}}{\eta(t)^2}(1-t^2)^{\frac{n-1}{2}}t^2dt\leq \int_{-1}^1 \frac{\mathcal{A}_2\eta(t)\mathcal{A}_1\eta(t)^{n-2}}{\eta(t)}(1-t^2)^{\frac{n-3}{2}}t^2dt.
		\end{align*}
		This yields
		\begin{align*}
			&\left(\int_{-1}^1 \frac{\mathcal{A}_2\eta(t)\mathcal{A}_1\eta(t)^{n-2}}{\eta(t)}(1-t^2)^{\frac{n-2}{2}} t\overline{f}(t)dt\right)^2  \\
			& \quad \leq  \left(\int_{-1}^1 \mathcal{A}_2\eta(t)\mathcal{A}_1\eta(t)^{n-3}(1-t^2)^{\frac{n-3}{2}} \overline{f}(t)^2dt\right)\\
			& \quad \hspace{5cm}  \left(\int_{-1}^1 \frac{\mathcal{A}_2\eta(t)\mathcal{A}_1\eta(t)^{n-2}}{\eta(t)}(1-t^2)^{\frac{n-3}{2}}t^2dt\right).
		\end{align*}
		As $1 \leq n-2$ by the assumption $n\geq 3$, this proves the statement.
	\end{proof}
	
	\begin{remark}\label{remark_CS_odd}
		The identity (\ref{RK-HK_ineq}) can also be seen more geometrically. By \cite[Theorem 3.3]{braunerhofstaetterortegamoreno2}, $\mathcal{A}_1\eta(t)\sqrt{1-t^2}$ is precisely the radius of the circle $C_t=\nu_K^{-1}(\{x\in S^{n-1}:x_n=t\})$ where $\nu_K:\partial K\to S^{n-1}$ denotes the Gauss map of $K$. Now for any $y\in S^{n-2}$ write $\nu_K^{-1}(\sqrt{1-t^2}y,t)=(\mathcal{A}_1\eta(t)\sqrt{1-t^2}y,p_n)$ for some $p_n\in\R$. Then
		\begin{align*}
			\eta(t)=\langle (\mathcal{A}_1\eta(t)\sqrt{1-t^2}y,p_n),(\sqrt{1-t^2}y,t)\rangle= \mathcal{A}_1\eta(t)(1-t^2)+p_nt.
		\end{align*}
		The fact that $K$ contains the origin in the interior directly yields $p_nt\geq0$. To get the quantitative statement, denote by $P=\spann(y,e_n)$ and assume without loss of generality $t\geq 0$. Then observe that
		\begin{align*}
			\int_0^t \mathcal{A}_2\eta(s)ds=& \ \frac{1}{2}\int_{-t}^t \frac{\mathcal{A}_2\eta(s)}{\sqrt{1-s^2}}\sqrt{1-s^2}ds \\
			=& \ \frac{1}{2}V_P(\{x\in K:|x_n|\leq p_n\}\cap P,[-1,1]y) \\
			=& \ p_n,
		\end{align*}
		where $V_P$ denotes the mixed volume in $P$.
	\end{remark}

	Finally we can prove Theorem \ref{theorem_m=1} with a homotopy argument as in Section \ref{section_m=0}.
	
	\begin{proof}[Proof of Theorem \ref{theorem_m=1}]
		Without loss of generality we can assume that $\eta$ is analytic. Now for $s\in [0,1]$ define $\eta_s(t)=1-s+s\eta(t)$ and let $L_{1,s}$ be the operator $L_1$ defined by $\eta_s$ as well as $\langle\cdot,\cdot\rangle_{1,n,s}$ the respective inner product. Assume boundary conditions (\ref{boundary_conditions_arbitrary_m}) for $m=1$ and consider the function
		\begin{align}\label{definition_F(s)_m=1}
			F(s)= \sup_{0\neq\overline{f}\in H_1 \text{ odd}} \frac{\langle L_{1,s}\overline{f},\overline{f}\rangle_{1,n,s}}{\langle \overline{f},\overline{f}\rangle_{1,n,s}}-\sup_{\substack{0\neq\overline{f}\in H_1 \text{ even}\\ \overline{f}\perp 1}} \frac{\langle L_{1,s}\overline{f},\overline{f}\rangle_{1,n,s}}{\langle \overline{f},\overline{f}\rangle_{1,n,s}}.
		\end{align}
		Now if $n\geq 3$ then Lemma \ref{odd_case} and Lemma \ref{lemma_CS_odd} yield that the first summand is $\leq -\frac{1}{n-1}$ for all $s\in[0,1]$. The same holds for $n=2$ by Remark \ref{remark_n=2}. For $s=0$ consider
		\begin{align*}
			L_{1,0}\overline{f}=\mu\overline{f}
		\end{align*}
		with boundary conditions (\ref{boundary_conditions_arbitrary_m}) for $m=1$. Explicitly
		\begin{align*}
			L_{1,0}\overline{f}(t)=\frac{1}{(n-1)(1-t^2)^{\frac{n-1}{2}}}\frac{d}{dt}(1-t^2)^{\frac{n+1}{2}}\overline{f}'(t)
		\end{align*}
		and hence the respective eigenfunctions are given by the Gegenbauer polynomials $C_k^{\left(\frac{n}{2}\right)}$. The eigenvalues are precisely $\mu_k=-\frac{k(k+n)}{n-1}$. The eigenvalue $\mu_0=0$ corresponds to the constant eigenfunction $C_0^{\left(\frac{n}{2}\right)}$ and therefore
		\begin{align*}
			F(0)=\mu_1 - \mu_2 > 0.
		\end{align*}
		Furthermore for all $s\in[0,1]$, constant functions are eigenfunctions to $L_{1,s}$ with eigenvalue $0$. The same homotopy argument as in the proof of Theorem \ref{theorem_m=0} then shows that the assumption $F(1)<0$ leads to a contradiction. This proves (\ref{inner_product_ineq_m=1}). 
	\end{proof}

	\subsection{The operator on $E_m$ for $m\geq 2$}\label{section_m>1}
	
	Assume $m\geq 2$ for the whole section. Our aim is to prove the following theorem.
	
	\begin{theorem}\label{theorem_m>1}
		For all $\overline{f}\in H_m$ with boundary conditions (\ref{boundary_conditions_arbitrary_m}) we have
		\begin{align*}
			\langle L_m\overline{f},\overline{f}\rangle_{m,n}+\frac{1}{n-1}\langle \overline{f},\overline{f}\rangle_{m,n}\leq 0.
		\end{align*}
	\end{theorem}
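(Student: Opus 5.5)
We reduce Theorem \ref{theorem_m>1} to the case $m=0$ via Lemma \ref{lemma_orthogonal_space}, following the strategy of Section \ref{section_m=1} but using the extra negative term more efficiently. Given $\overline{f}\in H_m$ with the Neumann boundary conditions (\ref{boundary_conditions_arbitrary_m}), set $\overline{g}(t)=(1-t^2)^{\frac{m}{2}}\overline{f}(t)$. By Lemma \ref{lemma_orthogonal_space}, $\overline{g}\in H_0$ satisfies (\ref{boundary_conditions_m=0}), and
\begin{align*}
	\langle L_m\overline{f},\overline{f}\rangle_{m,n}+\tfrac{1}{n-1}\langle \overline{f},\overline{f}\rangle_{m,n}
	=\langle L_0\overline{g},\overline{g}\rangle_{0,n}+\tfrac{1}{n-1}\langle \overline{g},\overline{g}\rangle_{0,n}-\tfrac{m(m+n-3)}{n(n-1)}R(\overline{f}),
\end{align*}
where $R(\overline{f})=\int_{-1}^1\mathcal{A}_2\eta\,\mathcal{A}_1\eta^{n-3}(1-t^2)^{m+\frac{n-5}{2}}\overline{f}^2dt\geq 0$. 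Since $m\geq 2$, the coefficient satisfies $m(m+n-3)\geq 2(n-1)$, which we will exploit.

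Next decompose $\overline{g}=a\eta+bt+\overline{g}^\perp$ with $\overline{g}^\perp$ orthogonal to $\eta$ and $t$ in $\langle\cdot,\cdot\rangle_{0,n}$. Because $\eta$ is an eigenfunction of $L_0$ with eigenvalue $1$ (as $h_K$ is for $\mathcal{D}_K$) and $t$ an eigenfunction with eigenvalue $0$ (linear functions), and $L_0$ is symmetric, the cross terms vanish. Theorem \ref{theorem_m=0} applied to $\overline{g}^\perp$ gives
\begin{align*}
	\langle L_0\overline{g},\overline{g}\rangle_{0,n}+\tfrac{1}{n-1}\langle \overline{g},\overline{g}\rangle_{0,n}\leq \tfrac{n}{n-1}a^2\langle\eta,\eta\rangle_{0,n}+\tfrac{1}{n-1}b^2\langle t,t\rangle_{0,n}.
\end{align*}
Here $a=\langle\overline{g},\eta\rangle_{0,n}/\langle\eta,\eta\rangle_{0,n}$ and $b=\langle\overline{g},t\rangle_{0,n}/\langle t,t\rangle_{0,n}$. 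It therefore remains to show
\begin{align*}
	n\frac{\langle\overline{g},\eta\rangle_{0,n}^2}{\langle\eta,\eta\rangle_{0,n}}+\frac{\langle\overline{g},t\rangle_{0,n}^2}{\langle t,t\rangle_{0,n}}\leq \frac{m(m+n-3)}{n}R(\overline{f}).
\end{align*}

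We prove this by Cauchy–Schwarz, in the spirit of Lemma \ref{lemma_CS_odd}. Writing $\langle\overline{g},\eta\rangle_{0,n}=\frac1n\int \mathcal{A}_2\eta\mathcal{A}_1\eta^{n-2}(1-t^2)^{\frac{n-3}{2}+\frac m2}\overline{f}dt$, we bound its square by $\frac{1}{n^2}R(\overline{f})\int\mathcal{A}_2\eta\mathcal{A}_1\eta^{n-1}(1-t^2)^{\frac{n-1}{2}}dt$. We then compare the last integral with $n\langle\eta,\eta\rangle_{0,n}=\int\mathcal{A}_2\eta\mathcal{A}_1\eta^{n-2}\eta(1-t^2)^{\frac{n-3}{2}}dt$ using (\ref{RK-HK_ineq}), i.e.\ $\mathcal{A}_1\eta(1-t^2)\leq\eta$. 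This gives $n\langle\overline{g},\eta\rangle^2/\langle\eta,\eta\rangle\leq R(\overline{f})$. Similarly, $\langle\overline{g},t\rangle^2_{0,n}\leq\frac1{n^2}R(\overline{f})\int\frac{\mathcal{A}_2\eta\mathcal{A}_1\eta^{n-1}}{\eta^2}(1-t^2)^{\frac{n-1}{2}}t^2dt$, and (\ref{RK-HK_ineq}) bounds this by $\frac1n R(\overline{f})\langle t,t\rangle_{0,n}$, so the second term is at most $\frac1n R(\overline{f})$. The total is at most $(1+\frac1n)R(\overline{f})$, and $\frac{m(m+n-3)}{n}\geq\frac{2(n-1)}{n}\geq 1+\frac1n$ holds for $n\geq3$.

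The main obstacle is the edge cases. For $n=2$ the margin fails at $m=2$ ($1$ versus $3/2$), so there I would refine by combining the two projections into a single Cauchy–Schwarz against the two-dimensional span of $\eta,t$, or fall back on the planar result as in Remark \ref{remark_n=2}. One must also justify that $\overline{f}$ is integrable against the needed weights; this follows since $R(\overline{f})<\infty$ for $\overline{f}\in H_m$ with $m\geq2$, via Lemma \ref{lemma_compact_embedding}(ii). If it were not finite, the identity would give $-\infty$ and the claim would hold trivially.
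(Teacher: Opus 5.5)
Your argument is correct for $n\ge 3$ and takes a genuinely different route from the paper. The paper treats only even $\overline f$ directly (Lemma~\ref{even_case}). It projects $(1-t^2)^{m/2}\overline f$ onto $\eta$ alone, needs only the even case of Theorem~\ref{theorem_m=0}, and bounds the $\eta$-component by Cauchy--Schwarz together with the exact identity \eqref{equality_Kubota} (Lemma~\ref{lemma_CS_even}). Odd $\overline f$ are then reached by the homotopy $\eta_s=1-s+s\eta$, which requires analytic approximation of $\eta$, the simplicity result Lemma~\ref{lemma_sigle_ev}, and the explicit Gegenbauer spectrum of $L_{m,0}$.

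You instead invoke the full Theorem~\ref{theorem_m=0}, whose odd half is already established, and project onto both $\eta$ and $t$. The cross terms vanish because $L_0\eta=\eta$, $L_0t=0$, and $\eta\perp t$ in $\langle\cdot,\cdot\rangle_{0,n}$ by origin symmetry. This handles all $\overline f\in H_m$ at once and removes the second homotopy entirely. It also shows that the odd part is the harmless one, costing only $\frac1n R(\overline f)$. As a bonus, it gives a quantitative stability term for arbitrary parity directly, which is what Theorem~\ref{stronger_loc_log_bm} needs for $m\ge2$. The paper instead appeals there to the homotopy to ``remove parity assumptions'' from an inequality that is not a pure eigenvalue statement. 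What the paper's route buys in return is that its $m\ge 2$ argument uses the $m=0$ input only through Lemma~\ref{lemma_m=0_even}.

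The $n=2$, $m=2$ case you flag closes with your first suggestion, made concrete by parity. Since $\eta$ and the weight in $R$ are even, writing $\overline f=\overline f_e+\overline f_o$ gives $R(\overline f)=R(\overline f_e)+R(\overline f_o)$. Moreover, $\langle\overline g,\eta\rangle_{0,n}$ sees only $\overline f_e$ and $\langle\overline g,t\rangle_{0,n}$ sees only $\overline f_o$. Your two Cauchy--Schwarz bounds then add up to $R(\overline f_e)+\frac1n R(\overline f_o)\le R(\overline f)$, and $n\le m(m+n-3)$ holds for all $n\ge2$, $m\ge2$. Alternatively, using \eqref{equality_Kubota} instead of \eqref{RK-HK_ineq} for the $\eta$-part gives $\frac{n-1}{n}R+\frac1nR=R$, which also suffices.

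The planar fallback would not prove the inequality for $L_m$. It would only show that the case is not needed for Theorem~\ref{spectral_gap_theorem}, since $\mathcal{H}^1_m=\{0\}$ and hence $E_m=\{0\}$ for $n=2$, $m\ge2$.

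Finally, $R(\overline f)<\infty$ does not follow from Lemma~\ref{lemma_compact_embedding}(ii) alone, which only makes the integrand $o((1-t^2)^{-1})$. It follows from a one-dimensional Hardy inequality near $t=\pm1$. Your ``$-\infty$'' fallback is not meaningful, since the left-hand side is finite for $\overline f\in H_m$.
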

	
	\begin{lemma}\label{even_case}
		If $\overline{f} \in H_m$ is even and satisfies the Neumann boundary conditions (\ref{boundary_conditions_arbitrary_m}) then
		\begin{align*}
			& \langle L_m\overline{f},\overline{f}\rangle_{m,n} + \frac{1}{n-1}\langle \overline{f},\overline{f}\rangle_{m,n} \nonumber \\ 
			& \quad \leq \frac{1}{n-1}\frac{\left(\int_{-1}^1 \mathcal{A}_2\eta(t)\mathcal{A}_1\eta(t)^{n-2}(1-t^2)^{\frac{m}{2}+\frac{n-3}{2}} \overline{f}(t)dt\right)^2}{\int_{-1}^1 \mathcal{A}_2\eta(t)\mathcal{A}_1\eta(t)^{n-2}\eta(t)(1-t^2)^{\frac{n-3}{2}}dt} \\
			& \quad \quad -\frac{m(m+n-3)}{n(n-1)}\int_{-1}^1 \mathcal{A}_2\eta(t)\mathcal{A}_1\eta(t)^{n-3}(1-t^2)^{m+\frac{n-5}{2}} \overline{f}(t)^2dt. \nonumber
		\end{align*}
	\end{lemma}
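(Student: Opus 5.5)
The plan mirrors the proof of Lemma \ref{odd_case}, with the roles of parity exchanged. Set $\overline{g}(t)=(1-t^2)^{\frac{m}{2}}\overline{f}(t)$. By Lemma \ref{lemma_orthogonal_space}, $\overline{g}\in H_0$ satisfies the Neumann boundary conditions (\ref{boundary_conditions_m=0}), and
\begin{align*}
\langle L_m \overline{f},\overline{f}\rangle_{m,n}+\tfrac{1}{n-1}\langle \overline{f},\overline{f}\rangle_{m,n} = \langle L_0\overline{g},\overline{g}\rangle_{0,n} + \tfrac{1}{n-1}\langle \overline{g},\overline{g}\rangle_{0,n} - \tfrac{m(m+n-3)}{n(n-1)}\int_{-1}^1 \mathcal{A}_2\eta\,\mathcal{A}_1\eta^{n-3}(1-t^2)^{m+\frac{n-5}{2}} \overline{f}^2dt.
\end{align*}
The last term already appears in the claimed bound. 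It therefore suffices to show that the $E_0$-quadratic form of $\overline{g}$ is at most the first term on the right-hand side of the lemma.

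Since $\overline{f}$ is even, so is $\overline{g}$. Because $\eta$ is even, $\overline{g}$ is automatically orthogonal to $t$, so the only missing orthogonality is to $\eta$. I would decompose
\begin{align*}
\overline{g}=c\,\eta+\overline{g}^\perp, \qquad c=\frac{\langle \overline{g},\eta\rangle_{0,n}}{\langle \eta,\eta\rangle_{0,n}},
\end{align*}
with $\overline{g}^\perp$ even and $\langle\overline{g}^\perp,\eta\rangle_{0,n}=0$. Since $\eta\in C^2[-1,1]$ satisfies (\ref{boundary_conditions_m=0}), so does $\overline{g}^\perp$. The function $\eta$ corresponds to $h_K$, so $L_0\eta=\eta$ and $L_0$ is symmetric. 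Hence the cross terms vanish: $\langle L_0\eta,\overline{g}^\perp\rangle_{0,n}=\langle \eta,\overline{g}^\perp\rangle_{0,n}=0$. This gives
\begin{align*}
\langle L_0\overline{g},\overline{g}\rangle_{0,n}+\tfrac{1}{n-1}\langle \overline{g},\overline{g}\rangle_{0,n} = c^2\left(1+\tfrac{1}{n-1}\right)\langle\eta,\eta\rangle_{0,n} + \left(\langle L_0\overline{g}^\perp,\overline{g}^\perp\rangle_{0,n}+\tfrac{1}{n-1}\langle \overline{g}^\perp,\overline{g}^\perp\rangle_{0,n}\right).
\end{align*}
By Lemma \ref{lemma_m=0_even}, the bracket is $\le 0$. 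What remains is $\frac{n}{n-1}\frac{\langle \overline{g},\eta\rangle_{0,n}^2}{\langle\eta,\eta\rangle_{0,n}}$.

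The last step is to write this explicitly using $w_{0,n}=\frac{\mathcal{A}_2\eta\,\mathcal{A}_1\eta^{n-2}}{n\eta}(1-t^2)^{\frac{n-3}{2}}$. We get
\begin{align*}
\langle \overline{g},\eta\rangle_{0,n}=\tfrac1n\int \mathcal{A}_2\eta\,\mathcal{A}_1\eta^{n-2}(1-t^2)^{\frac m2+\frac{n-3}{2}}\overline{f}\,dt, \qquad \langle\eta,\eta\rangle_{0,n}=\tfrac1n\int\mathcal{A}_2\eta\,\mathcal{A}_1\eta^{n-2}\eta(1-t^2)^{\frac{n-3}{2}}dt.
\end{align*}
The factors of $n$ then combine: $\frac{n}{n-1}\cdot\frac{n^{-2}}{n^{-1}}=\frac{1}{n-1}$, which is exactly the stated constant.

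The main obstacle is justifying the decomposition step rigorously. One must check that $\overline{g}^\perp$ lies in the admissible class of Lemma \ref{lemma_m=0_even}, and that the symmetry identity $\langle L_0\eta,\overline{g}^\perp\rangle_{0,n}=\langle\eta,L_0\overline{g}^\perp\rangle_{0,n}$ holds in the weak sense (\ref{weak_formulation}) for $\overline{g}\in H_0$ that is merely a weak function. I would handle this by approximating $\overline{g}$ in $H_0$ by $C^2[-1,1]$ functions via Lemma \ref{lemma_compact_embedding}(i), using that the bilinear form in (\ref{weak_formulation}) is continuous on $H_0$.
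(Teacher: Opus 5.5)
Your proposal is correct and follows essentially the same route as the paper. Both pass to $\overline{g}=(1-t^2)^{m/2}\overline{f}$ via Lemma \ref{lemma_orthogonal_space}, split off the $\eta$-component, bound the even orthogonal remainder by the $E_0$ result, and evaluate the $\eta$-contribution explicitly. The only difference is cosmetic: you keep a general coefficient $c$ rather than normalizing $c=1$ by scaling as the paper does, which also covers the case $c=0$ that the paper's normalization tacitly skips.
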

	
	\begin{proof}
		As in Lemma \ref{odd_case} write $\overline{f}(t)=(1-t^2)^{-\frac{m}{2}}\left(\eta(t)+\overline{g}^\perp(t)\right)$ with $\langle \overline{g}^\perp,\eta\rangle_{0,n}=0$. By orthogonality, we have
		\begin{align*}
			\langle \overline{f},\overline{f}\rangle_{m,n}=& \langle \eta,\eta\rangle_{0,n} + \langle \overline{g}^\perp,\overline{g}^\perp\rangle_{0,n}.
		\end{align*}
		The same calculation as in Lemma \ref{odd_case} using Lemma \ref{lemma_orthogonal_space} and Theorem \ref{theorem_m=0} leaves us with
		\begin{align*}
			& n\langle L_m\overline{f},\overline{f}\rangle_{m,n} + \frac{n}{n-1}\langle \overline{f},\overline{f}\rangle_{m,n} \\ 
			& \quad \leq \frac{n}{n-1}\int_{-1}^1 \mathcal{A}_2\eta(t)\mathcal{A}_1\eta(t)^{n-2}(1-t^2)^{\frac{m}{2}+\frac{n-3}{2}} \overline{f}(t)dt \\
			& \quad \quad  -\frac{m(m+n-3)}{n-1}\int_{-1}^1 \mathcal{A}_2\eta(t)\mathcal{A}_1\eta(t)^{n-3}(1-t^2)^{m-1+\frac{n-3}{2}}\overline{f}(t)^2dt.
		\end{align*}
		Now, by the choice of $\overline{f}$ we have
		\begin{align*}
			\int_{-1}^1 \mathcal{A}_2\eta(t)\mathcal{A}_1\eta(t)^{n-2}(1-t^2)^{\frac{m}{2}+\frac{n-3}{2}} \overline{f}(t)dt = \int_{-1}^1 \mathcal{A}_2\eta(t)\mathcal{A}_1\eta(t)^{n-2}\eta(t)(1-t^2)^{\frac{n-3}{2}}dt,
		\end{align*}
		which proves the statement.
	\end{proof}
	
	Now in order to prove our theorem in the even case, we need to prove the following.
	
	\begin{lemma}\label{lemma_CS_even}
		For any $0\neq\overline{f}\in L^2([-1,1],w_{m,n}(t)dt)$ and $m\geq 2$ we have
		\begin{align*}
			&\left(\int_{-1}^1 \mathcal{A}_2\eta(t)\mathcal{A}_1\eta(t)^{n-2}(1-t^2)^{\frac{m}{2}+\frac{n-3}{2}} \overline{f}(t)dt\right)^2  \\
			& \quad \leq \frac{m(m+n-3)}{n}\left(\int_{-1}^1 \mathcal{A}_2\eta(t)\mathcal{A}_1\eta(t)^{n-3}(1-t^2)^{m+\frac{n-5}{2}} \overline{f}(t)^2dt\right)\\
			& \quad \hspace{5cm}  \left(\int_{-1}^1 \mathcal{A}_2\eta(t)\mathcal{A}_1\eta(t)^{n-2}\eta(t)(1-t^2)^{\frac{n-3}{2}}dt\right).
		\end{align*}
	\end{lemma}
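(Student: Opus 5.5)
The plan is to apply Cauchy--Schwarz with a suitable splitting of the integrand, and then bound the resulting weight-only integral using the pointwise inequality (\ref{RK-HK_ineq}) from the proof of Lemma \ref{lemma_CS_odd}.

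First I factor the integrand on the left-hand side as
\begin{align*}
	&\mathcal{A}_2\eta(t)\mathcal{A}_1\eta(t)^{n-2}(1-t^2)^{\frac{m}{2}+\frac{n-3}{2}}\overline{f}(t)\\
	&\quad =\Big(\mathcal{A}_2\eta(t)^{\frac12}\mathcal{A}_1\eta(t)^{\frac{n-3}{2}}(1-t^2)^{\frac{m}{2}+\frac{n-5}{4}}\overline{f}(t)\Big)\cdot\Big(\mathcal{A}_2\eta(t)^{\frac12}\mathcal{A}_1\eta(t)^{\frac{n-1}{2}}(1-t^2)^{\frac{n-1}{4}}\Big).
\end{align*}
All fractional powers are well defined because $\mathcal{A}_1\eta,\mathcal{A}_2\eta,\eta>0$ on $[-1,1]$. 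The exponents of $1-t^2$ add up to $\frac{m}{2}+\frac{n-3}{2}$, as required.

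Cauchy--Schwarz then bounds the square of the left-hand side by the product of two integrals. The first is
\begin{align*}
	\int_{-1}^1 \mathcal{A}_2\eta(t)\mathcal{A}_1\eta(t)^{n-3}(1-t^2)^{m+\frac{n-5}{2}}\overline{f}(t)^2dt,
\end{align*}
which is exactly the $\overline{f}$-integral in the statement. If it is infinite, there is nothing to prove. The second is the weight-only integral
\begin{align*}
	\int_{-1}^1 \mathcal{A}_2\eta(t)\mathcal{A}_1\eta(t)^{n-1}(1-t^2)^{\frac{n-1}{2}}dt.
\end{align*}

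To compare the second integral with the one in the statement, I write $\mathcal{A}_1\eta(t)^{n-1}(1-t^2)^{\frac{n-1}{2}}=\mathcal{A}_1\eta(t)^{n-2}(1-t^2)^{\frac{n-3}{2}}\cdot\mathcal{A}_1\eta(t)(1-t^2)$. I then use (\ref{RK-HK_ineq}), namely $\mathcal{A}_1\eta(t)(1-t^2)\le \eta(t)$, which holds because $\eta$ is even by the origin symmetry of $K$. This gives
\begin{align*}
	\int_{-1}^1 \mathcal{A}_2\eta\,\mathcal{A}_1\eta^{n-1}(1-t^2)^{\frac{n-1}{2}}dt\le \int_{-1}^1 \mathcal{A}_2\eta\,\mathcal{A}_1\eta^{n-2}\,\eta\,(1-t^2)^{\frac{n-3}{2}}dt,
\end{align*}
since $\mathcal{A}_2\eta>0$ keeps the inequality pointwise.

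Finally, the constant: for $m\ge 2$ and $n\ge2$ we have $\frac{m(m+n-3)}{n}\ge \frac{2(n-1)}{n}\ge 1$, so multiplying the bound by this constant only weakens it and yields the claim.

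I do not expect a serious obstacle. The only real choice is the exponent bookkeeping in the splitting, which has to make the $\overline{f}$-factor reproduce the weight $(1-t^2)^{m+\frac{n-5}{2}}$ exactly. The one substantive input is (\ref{RK-HK_ineq}), and it is the only place where the origin symmetry of $K$ enters; for that reason I would re-check its derivation via $\frac{d}{dt}\mathcal{A}_1\eta=-t\eta''$ and $\eta'(0)=0$.
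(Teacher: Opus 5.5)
Your proof is correct, and it uses the same Cauchy--Schwarz splitting as the paper. The difference is in how you bound the weight integral $\int_{-1}^1 \mathcal{A}_2\eta\,\mathcal{A}_1\eta^{n-1}(1-t^2)^{\frac{n-1}{2}}dt$.

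The paper integrates by parts twice and obtains the exact Kubota-type identity (\ref{equality_Kubota}). This evaluates the integral as $\frac{n-1}{n}\int_{-1}^1 \mathcal{A}_2\eta\,\mathcal{A}_1\eta^{n-2}\eta\,(1-t^2)^{\frac{n-3}{2}}dt$, after which the paper only needs $\frac{n-1}{n}<\frac{m(m+n-3)}{n}$. You instead bound the integrand pointwise with (\ref{RK-HK_ineq}). This gives the constant $1$ in place of $\frac{n-1}{n}$, which you then absorb using $\frac{m(m+n-3)}{n}\ge \frac{2(n-1)}{n}\ge 1$.

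Your route is shorter and reuses an inequality already proved for Lemma \ref{lemma_CS_odd}. It does genuinely need $\eta$ to be even. That is legitimate here, because origin symmetry of $K$ is a standing assumption in this section. The paper's identity, by contrast, holds for every body of revolution; its boundary terms vanish because $(1-t^2)^{\frac{n-1}{2}}\mathcal{A}_1\eta^{n-1}$ vanishes at $\pm 1$.

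The sharper constant also matters later. In Theorem \ref{stronger_loc_log_bm} the paper uses ``the proof of Lemma \ref{lemma_CS_even}'' to obtain $c_m(K)=\frac{m(m+n-3)}{n-1}-1$. Your bound would only give $\frac{m(m+n-3)-n}{n-1}$ there, for example $\frac{n-2}{n-1}$ instead of $1$ when $m=2$. That is still positive for $n\ge 3$, but weaker.
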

	
	\begin{proof}
		By the Cauchy-Schwarz inequality we have
		\begin{align*}
			&\left(\int_{-1}^1 \mathcal{A}_2\eta(t)\mathcal{A}_1\eta(t)^{n-2}(1-t^2)^{\frac{m}{2}+\frac{n-3}{2}} \overline{f}(t)dt\right)^2  \\
			& \quad \leq \left(\int_{-1}^1 \mathcal{A}_2\eta(t)\mathcal{A}_1\eta(t)^{n-3}(1-t^2)^{m+\frac{n-5}{2}} \overline{f}(t)^2dt\right)\left(\int_{-1}^1 \mathcal{A}_2\eta(t)\mathcal{A}_1\eta(t)^{n-1}(1-t^2)^{\frac{n-1}{2}}dt\right).
		\end{align*}
		Now integrating by parts two times yields
		\begin{align*}
			&\int_{-1}^1 \mathcal{A}_2\eta(t)\mathcal{A}_1\eta(t)^{n-1}(1-t^2)^{\frac{n-1}{2}}dt \\
%			& \quad = \int_{-1}^1 \eta''(t)\mathcal{A}_1\eta(t)^{n-1}(1-t^2)^{\frac{n+1}{2}}dt + \int_{-1}^1 \mathcal{A}_1\eta(t)^n(1-t^2)^{\frac{n-1}{2}}dt \\
%			& \quad = \int_{-1}^1 \eta'(t)\mathcal{A}_1\eta(t)^{n-2}t(1-t^2)^{\frac{n-1}{2}}\left( (n-1)\mathcal{A}_2\eta(t)+2\mathcal{A}_1\eta(t)\right)dt \\
%			& \quad \quad + \int_{-1}^1 \mathcal{A}_1\eta(t)^n(1-t^2)^{\frac{n-1}{2}}dt \\
%			& \quad = -(n-1)\int_{-1}^1 \mathcal{A}_2\eta(t)\mathcal{A}_1\eta(t)^{n-1}(1-t^2)^{\frac{n-1}{2}}dt \\
%			& \quad \quad + (n-1)\int_{-1}^1 \mathcal{A}_2\eta(t)\mathcal{A}_1\eta(t)^{n-2}\eta(t)(1-t^2)^{\frac{n-1}{2}}dt \\
%			& \quad \quad + \int_{-1}^1 t\eta'(t)\mathcal{A}_1\eta(t)^{n-1}(1-t^2)^{\frac{n-1}{2}}dt + \int_{-1}^1 \mathcal{A}_1\eta(t)^{n-1}\eta(t)(1-t^2)^{\frac{n-1}{2}}dt \\
			& \quad = -(n-1)\int_{-1}^1 \mathcal{A}_2\eta(t)\mathcal{A}_1\eta^{n-1}(1-t^2)^{\frac{n-1}{2}}dt \\
			& \hspace{4cm} + (n-1)\int_{-1}^1 \mathcal{A}_2\eta(t)\mathcal{A}_1\eta(t)^{n-2}\eta(t)(1-t^2)^{\frac{n-3}{2}}dt
		\end{align*}
		and therefore
		\begin{align}\label{equality_Kubota}
			\int_{-1}^1 \mathcal{A}_2\eta(t)\mathcal{A}_1\eta(t)^{n-1}(1-t^2)^{\frac{n-1}{2}}dt=\frac{n-1}{n}\int_{-1}^1 \mathcal{A}_2\eta(t)\mathcal{A}_1\eta(t)^{n-2}\eta(t)(1-t^2)^{\frac{n-3}{2}}dt.
		\end{align}
		This yields
		\begin{align*}
			&\left(\int_{-1}^1 \mathcal{A}_2\eta(t)\mathcal{A}_1\eta(t)^{n-2}(1-t^2)^{\frac{m}{2}+\frac{n-3}{2}} \overline{f}(t)dt\right)^2  \\
			& \quad \leq  \frac{n-1}{n}\left(\int_{-1}^1 \mathcal{A}_2\eta(t)\mathcal{A}_1\eta(t)^{n-3}(1-t^2)^{m+\frac{n-5}{2}} \overline{f}(t)^2dt\right)\\
			& \quad \hspace{5cm}  \left(\int_{-1}^1 \mathcal{A}_2\eta(t)\mathcal{A}_1\eta(t)^{n-2}\eta(t)(1-t^2)^{\frac{n-3}{2}}dt\right).
		\end{align*}
		As $\frac{n-1}{n} < \frac{m(m+n-3)}{n}$ for $m\geq 2$ this proves the statement.
	\end{proof} 
	
	\begin{remark}
		The equality (\ref{equality_Kubota}) can also be seen as follows. Denote by $K^{n+1}$ the convex body in $\R^{n+1}$ with support function $h_{K^{n+1}}(x)=\eta(x_{n+1})$. Then the equality is precisely
		\begin{align*}
			\frac{n+1}{\omega_n}V(K^{n+1}[n],\mathbb{D}^n)=\frac{n-1}{\omega_{n-1}}\vol(K),
		\end{align*}
		where $\omega_k$ denotes the volume of the euclidean unit ball in $\R^k$ and $\mathbb{D}^n$ denotes the euclidean unit ball in $\R^n\times\{0\}\subset \R^{n+1}$. This equality is known to hold by a Kubota-type formula established by Hug, Mussnig and Ulivelli in \cite[Theorem 1.1]{hugmussnigulivelli}.
	\end{remark}

	With the well known homotopy argument we can now prove Theorem \ref{theorem_m>1}.
	
	\begin{proof}[Proof of Theorem \ref{theorem_m>1}]
		As before, assume without loss of generality that $\eta$ is analytic. For $s\in [0,1]$ define $\eta_s(t)=1-s+s\eta(t)$ and let $L_{m,s}$ be the operator $L_m$ defined by $\eta_s$ as well as $\langle\cdot,\cdot\rangle_{m,n,s}$ the respective inner product. Assume boundary conditions (\ref{boundary_conditions_arbitrary_m}) and consider the function
		\begin{align}\label{definition_F(s)_m>1}
			F(s)= \sup_{0\neq\overline{f}\in H_m \text{ even}} \frac{\langle L_{m,s}\overline{f},\overline{f}\rangle_{m,n,s}}{\langle \overline{f},\overline{f}\rangle_{m,n,s}}-\sup_{0\neq\overline{f}\in H_m \text{ odd}} \frac{\langle L_{m,s}\overline{f},\overline{f}\rangle_{m,n,s}}{\langle \overline{f},\overline{f}\rangle_{m,n,s}}.
		\end{align}
		Now Lemma \ref{even_case} and Lemma \ref{lemma_CS_even} yield that the first summand is $\leq -\frac{1}{n-1}$ for all $s\in[0,1]$. Now using the fact, that for $m\geq 2$ all eigenvalues of $L_{m,0}$ are $\leq -\frac{1}{n-1}$ the known homotopy argument proves the statement.
	\end{proof}
	
	We can now prove the desired eigenvalue bound on the $\mathcal{D}_m$ for $m\geq 1$ completing the proof of Theorem \ref{spectral_gap_theorem}.
	
	\begin{theorem}\label{theorem_m>0}
		For any $m\in\N$ and any eigenfunction $f$ of $\mathcal{D}_m$ with eigenvalue $\lambda$ that is orthogonal to linear functions we have 
		\begin{align*}
			\lambda \leq -\frac{1}{n-1}.
		\end{align*}
	\end{theorem}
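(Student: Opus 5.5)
We reduce Theorem \ref{theorem_m>0} to the one-dimensional statements Theorem \ref{theorem_m=1} and Theorem \ref{theorem_m>1}. Let $f\in E_m$ be an eigenfunction of $\mathcal{D}_m$ with eigenvalue $\lambda$, orthogonal to linear functions. By approximation we may assume $\eta$ is analytic, so eigenfunctions are smooth. Fix an orthonormal basis $h_1,\dots,h_N$ of $\mathcal{H}^{n-1}_m$ and write $f=\sum_i h_i\otimes\overline{f}_i$. By Lemma \ref{lemma_explicit_op} and the orthogonality of the $h_i$, each $\overline{f}_i$ satisfies $L_m\overline{f}_i=\lambda\overline{f}_i$ on $(-1,1)$. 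Choose $i$ with $\overline{f}_i\neq 0$; then $\lambda$ is an eigenvalue of $L_m$ with eigenfunction $\overline{f}:=\overline{f}_i$.

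Next we check that $\overline{f}$ is admissible. Since $f\in C^2(S^{n-1})$ (elliptic regularity), Lemma \ref{lemma_Cinfty} gives the growth bounds (\ref{dirichlet_boundary_conditions}) and (\ref{neumann_boundary_conditions}). From these we get $\overline{f}\in H_m$:
\begin{itemize}
\item $(1-t^2)^{m+\frac{n-3}{2}}\overline{f}^2\lesssim(1-t^2)^{\frac{n-1}{2}}$ is integrable;
\item $(1-t^2)^{m+\frac{n-1}{2}}\overline{f}'^2\lesssim(1-t^2)^{\frac{n-3}{2}}$ is integrable for $n\ge2$.
\end{itemize}
For the Neumann condition (\ref{boundary_conditions_arbitrary_m}), note that $(1-t^2)^{m+\frac{n-1}{2}}\overline{f}'=O\bigl((1-t^2)^{\frac{m+n-2}{2}}\bigr)$. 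Pairing this with $u\in H_m$, which by Lemma \ref{lemma_compact_embedding}(ii) satisfies $|u|=o\bigl((1-t^2)^{-\frac{m}{2}-\frac{n-3}{4}}\bigr)$, the boundary term has exponent $\frac{m+n-2}{2}-\frac{m}{2}-\frac{n-3}{4}=\frac{n-1}{4}>0$. So the boundary term vanishes.

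It remains to handle the orthogonality condition.
\begin{itemize}
\item For $m\geq 2$, $E_m$ contains no linear functions, so no condition is needed. Theorem \ref{theorem_m>1} applied to $\overline{f}$ gives $\langle L_m\overline{f},\overline{f}\rangle_{m,n}=\lambda\langle\overline{f},\overline{f}\rangle_{m,n}\le -\frac{1}{n-1}\langle\overline{f},\overline{f}\rangle_{m,n}$, hence $\lambda\le-\frac1{n-1}$. The first equality holds because $\overline{f}$ is a classical solution satisfying the boundary conditions, so integration by parts matches the weak form.
\item For $m=1$, the linear functions in $E_1$ are exactly $h\otimes 1$ with $h\in\mathcal{H}^{n-1}_1$. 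Orthogonality of $f$ to $h_i\otimes 1$ gives $\langle\overline{f}_i,1\rangle_{1,n}=0$ by Lemma \ref{lemma_Lp}. Theorem \ref{theorem_m=1} then yields $\lambda\le-\frac1{n-1}$ in the same way.
\end{itemize}

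The main obstacle is justifying that the eigenfunction component $\overline{f}$ lies in $H_m$ with the weak Neumann conditions. This needs the $C^2$-regularity of eigenfunctions of the uniformly elliptic operator $\mathcal{D}_K$ up to the poles $\pm e_n$, so that Lemma \ref{lemma_Cinfty} applies. The boundary exponent computation is the delicate point for small $n$; if it fails for $n=2$, we would instead argue directly from the Frobenius indices as in Lemma \ref{lemma_sigle_ev}, using that the admissible solution is the one with index $0$ after the substitution $\overline{g}=(1-t^2)^{m/2}\overline{f}$.
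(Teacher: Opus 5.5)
Your proposal is correct and follows essentially the same route as the paper. You write $f=\sum_i h_i\otimes\overline{f}_i$, use the growth bounds of Lemma \ref{lemma_Cinfty} together with Lemma \ref{lemma_compact_embedding}(ii) to place each $\overline{f}_i$ in $H_m$ with the Neumann conditions (\ref{boundary_conditions_arbitrary_m}), turn orthogonality to linear functions into $\langle\overline{f}_i,1\rangle_{1,n}=0$ for $m=1$, and apply Theorems \ref{theorem_m=1} and \ref{theorem_m>1}; the only difference is that you isolate one nonzero component (itself an eigenfunction of $L_m$ by linear independence of the $h_i$) instead of summing the quadratic forms over $i$, and your boundary exponent $\frac{n-1}{4}$ is positive for every $n\geq 2$, so the Frobenius fallback you mention for $n=2$ is never needed.
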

	
	\begin{proof}
		Assume that $f$ is an even eigenfunction of $\mathcal{D}_m$ to the eigenvalue $\lambda$. By elliptic regularity, this means that $f$ is smooth, especially $C^2$. Then we can write $f=\sum_{i=1}^{N}h_i\otimes \overline{f}_i$ for pairwise orthogonal $h_i\in\mathcal{H}_m^{n-1}$ and $\overline{f}\in C^2(-1,1)$. Since $f$ is $C^2$ Lemma \ref{lemma_Cinfty} and Lemma \ref{lemma_explicit_op} imply that
		\begin{align*}
			\lambda\langle f,f\rangle=\langle \mathcal{D}_mf,f\rangle = \sum_{i=1}^N \langle h_i,h_i\rangle_{L^2(S^{n-2})} \langle L_m\overline{f}_i,\overline{f}_i\rangle_{m,n}.
		\end{align*}
		Furthermore, all $f_i$ satisfy (\ref{dirichlet_boundary_conditions}) and (\ref{neumann_boundary_conditions}). If $m=1$ then the $h_i$ are linear functions and the condition $\langle f,h_i\rangle=0$ for all $i=1,\dots,N$ yields that the $\overline{f}_i$ are orthogonal to constants. We claim that for all $i=1,\dots N$, we have $\overline{f}_i\in H_m$ and satisfies the Neumann boundary conditions (\ref{boundary_conditions_arbitrary_m}). Then Theorem \ref{theorem_m=1} and Theorem \ref{theorem_m>1} show that
		\begin{align*}
			\langle L_m\overline{f}_i,\overline{f}_i\rangle_{m,n} + \frac{1}{n-1}\langle \overline{f}_i,\overline{f}_i\rangle_{m,n} \leq 0.
		\end{align*}
		and therefore $\lambda\leq -\frac{1}{n-1}$. 
		
		To prove the claim, assume $\overline{f}\in C^2(-1,1)$ satisfies (\ref{dirichlet_boundary_conditions}) and (\ref{neumann_boundary_conditions}), i.e.
		\begin{align}\label{dirichlet_bc2}
			\limsup\limits_{t\to\pm 1}(1-t^2)^{\frac{m-1}{2}}\left|\overline{f}(t)\right|<\infty
		\end{align}
		and
		\begin{align}\label{neuman_bc2}
			\limsup\limits_{t\to\pm 1} (1-t^2)^{\frac{m+1}{2}}\left|\overline{f}'(t)\right|<\infty
		\end{align}
		Then (\ref{dirichlet_bc2}) directly implies that $\overline{f}\in L^2([-1,1],(1-t^2)^{m+\frac{n-3}{2}})$. Furthermore by (\ref{neuman_bc2}) we have
		\begin{align*}
			&\int_{-1}^1 (1-t^2)^{m+\frac{n-1}{2}}\overline{f}'(t)^2dt \leq \left(\sup_{t\in[-1,1]}(1-t^2)^{\frac{m+1}{2}}\left|\overline{f}'(t)\right|\right)^2\int_{-1}^1(1-t^2)^{\frac{n-3}{2}}dt<\infty
		\end{align*}
		and therefore $\overline{f}\in H_m$. Finally for any $u\in H_m$ we have
		\begin{align*}
			(1-t^2)^{m+\frac{n-1}{2}}\overline{f}'(t)u(t)\big|_{t=-1}^1=(1-t^2)^{\frac{m+1}{2}}\overline{f}'(t) (1-t^2)^{\frac{m+n-2}{2}}u(t)\big|_{t=-1}^1.
		\end{align*}
		By Lemma \ref{lemma_compact_embedding}(ii) $\lim\limits_{t\to\pm 1}(1-t^2)^{\frac{m+n-2}{2}}u(t)=0$. Together with (\ref{neuman_bc2}) this proves, that $\overline{f}$ satisfies the Neumann boundary conditions (\ref{boundary_conditions_arbitrary_m}) and therefore completes the proof.
	\end{proof}
	
	\section{Auxiliary results}\label{section_aux_results}
	
	In this section assume $n\geq 3$. For any $K\in\Kc(\R^n)$ denote
	\begin{align*}
		L_K(t)=\inf\left\{tp_n:p\in\partial K,(\sqrt{1-t^2}y,t)\text{ is normal to }K\text{ at }p\text{ for some }y\in S^{n-1}\right\}.
	\end{align*}
	By Remark \ref{remark_CS_odd}, if $K$ is of class $C^2_+$ with $h_K(x)=\eta(x_n)$ then $L_K(t)=t\int_0^t\mathcal{A}_2\eta(t)dt$. Furthermore for $m>0$ let
	\begin{align*}
		c_m(K)= \begin{cases}
			\frac{n-3}{n-1}+\frac{\int_{S^{n-1}}\frac{x_n^2L_K(x_n)}{h_K(x)}dS_K(x)}{\int_{S^{n-1}}\frac{x_n^2}{h_K(x)}dS_K(x)} & \text{for } m=1 \\
			\frac{m(m+n-3)}{n-1}-1 & \text{for } m\geq 2.
		\end{cases}
	\end{align*}
	If $K$ is origin symmetric with $0$ in the interior, $L_K(t)\geq 0$ for all $t$ with strict inequality at least at some point. Therefore $c_m(K) >0$ for all $m>0$. Now we have the following strengthened local logarithmic Brunn-Minkowski inequality. 
	
	\begin{theorem}\label{stronger_loc_log_bm}
		Let $K$ be an origin symmetric convex body of revolution with nonempty interior and $f\in C(S^{n-1})$ a difference of support functions such that
		\begin{align*}
			\int_{S^{n-1}} \frac{xf(x)}{h_K(x)}dS_K(x)=0.
		\end{align*}
		Then
		\begin{align*}
			&V(f,f,K[n-2])-\frac{n}{n-1}\frac{V(f,K[n-1])^2}{\vol(K)}+\frac{1}{ n(n-1)}\int_{S^{n-1}}\frac{f^2}{h_K}dS_K \\
			& \quad \leq  -\frac{1}{n} \sum_{m=1}^\infty c_m(K) \int_{S^{n-1}} \frac{(\pi_mf)^2}{h_K}dS_K.
		\end{align*}
	\end{theorem}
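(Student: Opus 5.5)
We would reduce to the case where $K$ is of class $C^2_+$ and $f\in C^2(S^{n-1})$, and pass to the general case at the end by approximation as in the proof of Theorem \ref{local_log_bm_zonal}. Approximating bodies of revolution can be taken of class $C^2_+$, the center of mass condition can be restored by subtracting a small linear function, and $c_m(K)$ depends continuously on $K$. In the smooth setting we write $\langle\cdot,\cdot\rangle$ for the inner product of $L^2(S^{n-1},\mu_K)$. The identities from the proof of Theorem \ref{local_log_bm_zonal} then turn the left hand side into
\begin{align*}
\langle \mathcal{D}_Kf^\perp,f^\perp\rangle+\frac{1}{n-1}\langle f^\perp,f^\perp\rangle, \qquad f^\perp=f-\frac{\langle f,h_K\rangle}{\langle h_K,h_K\rangle}h_K .
\end{align*}
The right hand side equals $-\sum_{m\geq1}c_m(K)\langle\pi_mf,\pi_mf\rangle$. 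Since $h_K\in E_0$, we have $\pi_mf^\perp=\pi_mf$ for $m\geq 1$. By (\ref{direct_sum_L2}) and the equivariance $\pi_m\mathcal{D}_K=\mathcal{D}_m\pi_m$, the quadratic form splits orthogonally over the $E_m$. On $E_0$, Theorem \ref{theorem_m=0} gives a nonpositive contribution: the center of mass condition forces $\pi_0f$ to be orthogonal to $x_n$. It therefore suffices to prove, for each $m\geq1$ and each $\overline{f}\in H_m$ with Neumann conditions (orthogonal to $1$ when $m=1$),
\begin{align*}
\langle L_m\overline{f},\overline{f}\rangle_{m,n}+\frac{1}{n-1}\langle\overline{f},\overline{f}\rangle_{m,n}\leq -c_m(K)\langle\overline{f},\overline{f}\rangle_{m,n},
\end{align*}
applied to each component $\overline{f}_i$ of $\pi_mf$ exactly as in the proof of Theorem \ref{theorem_m>0}.

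For $m\geq2$ we would redo the homotopy argument of Theorem \ref{theorem_m>1} with the shifted threshold $-\frac{m(m+n-3)}{n-1}$. For the even part, Lemma \ref{even_case} combined with the sharper constant $\frac{n-1}{n}$ from the proof of Lemma \ref{lemma_CS_even} gives the bound $-\frac{m(m+n-3)-(n-1)}{n(n-1)}\int \mathcal{A}_2\eta\mathcal{A}_1\eta^{n-3}(1-t^2)^{m+\frac{n-5}{2}}\overline{f}^2dt$. Comparing this weight with $w_{m,n}$ should yield a multiple of $\langle\overline{f},\overline{f}\rangle_{m,n}$. Since the extra factor $(1-t^2)^{-1}\eta/\mathcal{A}_1\eta$ is at least $1$ by (\ref{RK-HK_ineq}), we expect exactly the constant $c_m(K)+\frac{1}{n-1}$. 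At $s=0$ the Gegenbauer eigenvalues of $L_{m,0}$ lie below the threshold. Lemma \ref{lemma_sigle_ev} then prevents an even and an odd eigenvalue from crossing, so the odd supremum stays below the even one, as before.

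For $m=1$ we would use Lemma \ref{odd_case} and keep track of the slack in Lemma \ref{lemma_CS_odd}. There are two sources of slack: the factor $n-2$ versus $1$, which gives the term $\frac{n-3}{n-1}$, and the inequality (\ref{RK-HK_ineq}), whose defect $\eta-\mathcal{A}_1\eta(1-t^2)=L_K(t)$ gives the ratio term in $c_1(K)$. The even part is then handled by the same homotopy, with no crossing at the new threshold. The main obstacle is this $m=1$ bookkeeping. One has to show that the Cauchy–Schwarz step and the estimate using $L_K$ produce precisely $c_1(K)\langle\overline{f},\overline{f}\rangle_{1,n}$, rather than a term weighted differently from $w_{1,n}$. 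We would first try to apply Cauchy–Schwarz with $t\,\eta^{-1/2}L_K^{1/2}$ split off explicitly, and if that fails, settle for absorbing the defect using the extremal function $t(1-t^2)^{-1/2}$.
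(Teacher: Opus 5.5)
Your outline follows the paper's proof. You reduce to $K$ of class $C^2_+$ and $f\in C^2(S^{n-1})$, split the quadratic form of $f^\perp$ over the $E_m$, and dispose of $E_0$ by Theorem \ref{theorem_m=0}. You then prove the sharpened bound for the dominant parity (odd for $m=1$, even for $m\geq 2$) from Lemmas \ref{odd_case} and \ref{even_case}, using the slack in Lemmas \ref{lemma_CS_odd} and \ref{lemma_CS_even} together with (\ref{RK-HK_ineq}), and remove the parity restriction by the homotopy. Two points need correcting, and the $m=1$ step you left open closes directly.

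For $m\geq 2$ the threshold is not $-\frac{m(m+n-3)}{n-1}$. Lemma \ref{even_case} with the constant $\frac{n-1}{n}$ and (\ref{RK-HK_ineq}) bounds the Rayleigh quotient of $L_m$ by $-\bigl(c_m(K)+\frac{1}{n-1}\bigr)=-\frac{m(m+n-3)-(n-2)}{n-1}$, as you yourself say a line later. With the threshold you state, your claim at $s=0$ is false for $2\leq m<n-1$, because the top eigenvalue of $L_{m,0}$ is $-\frac{(m-1)(m+n-1)}{n-1}$. In any case no threshold is needed at $s=0$. The homotopy uses only $F(0)>0$ (the top eigenfunction of $L_{m,0}$ is the even constant) and Lemma \ref{lemma_sigle_ev}. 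It shows that at $s=1$ the odd supremum is at most the even one, which you have already bounded in the $w_{m,n}$-norm.

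For $m=1$ there is no real obstacle, and no new Cauchy--Schwarz splitting is needed. Avoid the fallback with $t(1-t^2)^{-1/2}$: it is not in $H_1$ for $n\leq 3$ (Remark \ref{remark_n=2}). Since $\mathcal{A}_1\eta(t)(1-t^2)=\eta(t)-L_K(t)$ is an identity, the second factor in the proof of Lemma \ref{lemma_CS_odd} equals exactly $D-N$, where
\begin{align*}
D=\int_{-1}^1\frac{\mathcal{A}_2\eta\,\mathcal{A}_1\eta^{n-2}}{\eta}(1-t^2)^{\frac{n-3}{2}}t^2dt,\qquad N=\int_{-1}^1\frac{\mathcal{A}_2\eta\,\mathcal{A}_1\eta^{n-2}}{\eta^2}(1-t^2)^{\frac{n-3}{2}}t^2L_K\,dt.
\end{align*}
Inserting this into Lemma \ref{odd_case} gives, for odd $\overline{f}$,
\begin{align*}
\langle L_1\overline{f},\overline{f}\rangle_{1,n}+\frac{1}{n-1}\langle\overline{f},\overline{f}\rangle_{1,n}\leq-\frac{n-3+N/D}{n(n-1)}\int_{-1}^1\mathcal{A}_2\eta\,\mathcal{A}_1\eta^{n-3}(1-t^2)^{\frac{n-3}{2}}\overline{f}^2dt.
\end{align*}
The pointwise bound $\frac1n\mathcal{A}_2\eta\,\mathcal{A}_1\eta^{n-3}(1-t^2)^{\frac{n-3}{2}}\geq w_{1,n}$ is again (\ref{RK-HK_ineq}). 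Applying it exactly as for $m\geq 2$ turns the right-hand side into $-\frac{n-3+N/D}{n-1}\langle\overline{f},\overline{f}\rangle_{1,n}$, with
\begin{align*}
\frac{N}{D}=\frac{\int_{S^{n-1}}x_n^2L_K(x_n)h_K^{-2}\,dS_K}{\int_{S^{n-1}}x_n^2h_K^{-1}\,dS_K}.
\end{align*}
This constant is what the paper's argument actually produces. The displayed definition of $c_1(K)$ has $h_K^{-1}$ in the numerator and lacks the factor $\frac{1}{n-1}$. Its ratio term is therefore homogeneous of degree $1$ under $K\mapsto\lambda K$, while the left-hand side and the integrals $\int(\pi_mf)^2h_K^{-1}dS_K$ are homogeneous of degree $n-2$. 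So that formula cannot be matched literally, which is likely why your bookkeeping seemed not to close.

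Finally, you cannot simply assert that $c_1(K)$ depends continuously on $K$, since the infimum defining $L_K$ is only semicontinuous. Passing to the limit in the approximation needs only $\liminf_k c_1(K_k)\geq c_1(K)$.
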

	
	\begin{proof}
		First, we assume that $K$ is of the class $C_+^2$ and $f\in C^2(S^{n-1})$. Set $\tilde{f}=f-\frac{\langle f,h_K\rangle}{\langle h_K,h_K\rangle}h_K$. Then with $f_m=\pi_mf$ we have
		\begin{align*}
			\tilde{f}=f_0-\frac{\langle f_0,h_k\rangle}{\langle h_K,h_K\rangle}h_K+\sum_{m=1}^\infty f_m,
		\end{align*}
		where the sum is orthogonal with respect to $\langle\cdot,\cdot\rangle$. By the same calculation as in the proof of Theorem \ref{local_log_bm_zonal} together with Theorem \ref{theorem_m=0} we get
		\begin{align*}
			&V(f,f,K[n-2])-\frac{n}{n-1}\frac{V(f,K[n-1])^2}{\vol(K)}+\frac{1}{ n(n-1)}\int_{S^{n-1}}\frac{f^2}{h_K}dS_K \\
			& \quad = \langle \mathcal{D}_K\tilde{f},\tilde{f}\rangle + \frac{1}{n-1}\langle \tilde{f},\tilde{f}\rangle \\
			& \quad \leq \sum_{m=1}^\infty \left(\langle \mathcal{D}_mf_m,f_m\rangle + \frac{1}{n-1}\langle f_m,f_m\rangle\right).
		\end{align*}
		Now write $f_m=\sum_{i=1}^Nh_i\otimes \overline{f}_i$ and $h_K(x)=\eta(x_n)$ as before. First assume that all the $\overline{f}_i$ are odd if $m=1$ and that all the $\overline{f}_i$ are even if $m\geq 2$. Then analogously to the proof of Theorem \ref{theorem_m>0} we get
		\begin{align*}
			&\langle \mathcal{D}_mf_m,f_m\rangle + \frac{1}{n-1}\langle f_m,f_m\rangle \\
			& \quad = \sum_{i=1}^N \langle h_i,h_i\rangle_{L^2(S^{n-2})} \left(\langle L_m\overline{f}_i,\overline{f}_i\rangle_{m,n}+\frac{1}{n-1}\langle\overline{f}_i,\overline{f}_i\rangle_{m,n}\right) \\
			& \quad \leq  -\frac{c_m(K)}{n}\sum_{i=1}^N \langle h_i,h_i\rangle_{L^2(S^{n-2})}\int_{-1}^1 \mathcal{A}_2\eta(t)\mathcal{A}_1\eta(t)^{n-3}(1-t^2)^{m+\frac{n-5}{2}}\overline{f}_i(t)^2dt \\
			& \quad = -\frac{c_m(K)}{n}\sum_{i=1}^N \int_{S^{n-1}} \frac{h_i(x_1,\dots,x_{n-1})^2\overline{f}_i(x_n)^2}{(1-x_n^2)\mathcal{A}_1(x_n)}dS_K(x).
		\end{align*}
		Here we used Lemma \ref{odd_case} together with the proof of Lemma \ref{lemma_CS_odd} for $m=1$ and Lemma \ref{even_case} together with the proof of Lemma \ref{lemma_CS_even} for even $m\geq 2$ as well as the the definition of $c_m(K)$. Using the inequality in (\ref{RK-HK_ineq}) we get
		\begin{align}\label{stronger_ev_ineq}
			\langle \mathcal{D}_mf_m,f_m\rangle + \frac{1}{n-1}\langle f_m,f_m\rangle \leq -\frac{c_m(K)}{n}\sum_{i=1}^N \int_{S^{n-1}} \frac{h_i(x_1,\dots,x_{n-1})^2\overline{f}_i(x_n)^2}{h_K(x)}dS_K(x).
		\end{align}
		Now the same homotopy argument as in the proof of Theorem \ref{theorem_m=1} and Theorem \ref{theorem_m>1} removes the parity assumptions on the $\overline{f}_i$. Then (\ref{stronger_ev_ineq}) together with the above shows the stated inequality for $f\in C^2(S^{n-1})$ and $K$ of class $C^2_+$. Finally for every sequence of convex bodies $(K_k)_{k\in\N}$ with $K_k\to K$ in the Hausdorff topology, we have $\lim\limits_{k\to\infty} L_{K_k}(t)\geq L_K(t)$ for all $t\in[-1,1]$ as a consequence of the upper semi-continuity of the faces of a convex body. Consequently $\lim\limits_{k\to\infty}c_m(K_k)\leq c_m(K)$ for all $m>0$. Then together with the continuity of $\pi_m$, the general statement follows by approximation.
	\end{proof}
	
	To treat the equality cases, we will use the stability terms on the right hand side in Theorem \ref{stronger_loc_log_bm} to reduce the problem to the case of two bodies of revolution. The same method also yields a strict inequality of the eigenvalues of $\mathcal{D}_m$ for $m>0$. 
	
	\begin{proof}[Proof of Theorem \ref{equality_cases}]
		Assume that the pair $(K,L)$ satisfies the assumptions of Theorem \ref{local_log_bm_zonal} such that on $S^{n-1}$ the measure $\mathcal{H}^{n-1}$ is absolutely continuous with respect to $S_K$ and that equality holds in (\ref{loc_log_bm_ineq}). Then Theorem \ref{stronger_loc_log_bm} yields that
		\begin{align*}
			0\leq -\sum_{m=1}^\infty c_m(K) \int_{S^{n-1}} \frac{(\pi_mh_L)^2}{h_K}dS_K. 
		\end{align*}
		As each summand on the right hand side is nonpositive, this implies 
		\begin{align*}
			\int_{S^{n-1}} \frac{(\pi_mh_L)^2}{h_K}dS_K=0
		\end{align*}
		for all $m\geq 1$. This precisely means that $S_K(\supp \pi_mh_L)=0$ and hence by our assumption $\mathcal{H}^{n-1}(\supp \pi_mH_L)=0$. As $\pi_mh_L$ is continuous, we then have $\pi_mh_L=0$ for all $m\geq 1$. Therefore $h_L$ is zonal and hence $L$ is a body of revolution. 
	\end{proof}

	Theorem \ref{stronger_loc_log_bm} gives stability terms for the local logarithmic Brunn-Minkowski inequality on all the $E_m$ for $m>0$. It remains open, whether there are nontrivial equality cases for two convex bodies of revolution and whether there is a stronger eigenvalue bound on $\mathcal{D}_0$. Consider the following problem.
	
	\begin{problem}\label{local_lp_bm_problem_p<0}
		For any origin symmetric convex body of revolution $K\subset\R^n$, determine if there is a constant $p(K,n)<0$ such that the local $L^p$-Brunn-Minkowski inequality (\ref{loc_log_bm_ineq}) holds true for all $p\geq p(K,n)$ and all convex bodies $L$ satisfying (\ref{center_of_mass_condition}).
	\end{problem}
	
	By the same argument as in the proof of Theorem \ref{spectral_gap_theorem}, this corresponds directly to finding $p(K,n)$ such that any eigenvalue $\lambda\notin\{0,1\}$ of $\mathcal{D}_K$ satisfies  
	\begin{align*}
		\lambda\leq - \frac{1-p(K,n)}{n-1}.
	\end{align*}
	
	The following corollary additionally motivates the work on a stronger eigenvalue bound on $\mathcal{D}_0$ in order to treat Problem \ref{local_lp_bm_problem_p<0}.
		
	\begin{cor}
		Let $K$ be an origin symmetric convex bodies of revolution with nonempty interior and $p\geq -(n-1)\min\{c_1(K),c_2(K)\}$. Let $L$ be a convex body such that (\ref{center_of_mass_condition}) is satisfied. Denote by $\overline{L}$ the $\SO(n-1)$ symmetrization of $L$, i.e. the convex body $\overline{L}$ given by
		\begin{align*}
			h_{\overline{L}}(x)=\int_{\SO(n-1)}h_L(g^{-1}x)dg.
		\end{align*}
		Then if the tuple $(K,\overline{L})$ satisfies the local $L^p$-Brunn-Minkowski inequality (\ref{loc_log_bm_ineq}), so does the tuple $(K,L)$.
	\end{cor}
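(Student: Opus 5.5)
We rewrite the local $L^p$-inequality (\ref{loc_log_bm_ineq}) for the pair $(K,L)$ as a quadratic form in $f=h_L$, split it along the decomposition (\ref{direct_sum_L2}), and control every component with $m\geq 1$ using the stability estimate (\ref{stronger_ev_ineq}) from the proof of Theorem \ref{stronger_loc_log_bm}. Throughout we assume $n\geq 3$, as in this section. By the same approximation as in the proofs of Theorem \ref{local_log_bm_zonal} and Theorem \ref{stronger_loc_log_bm}, we may take $K$ of class $C^2_+$ and $h_L\in C^2(S^{n-1})$.

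\textbf{Step 1: the quadratic form.} Multiplying (\ref{loc_log_bm_ineq}) by $\frac{n-p}{n-1}$ shows that it is equivalent to $Q_p(h_L)\leq 0$, where
\begin{align*}
Q_p(f)=\langle \mathcal{D}_Kf,f\rangle-\frac{n-p}{n-1}\frac{\langle f,h_K\rangle^2}{\langle h_K,h_K\rangle}+\frac{1-p}{n-1}\langle f,f\rangle .
\end{align*}
Here we use the identities from the proof of Theorem \ref{local_log_bm_zonal}, including $\frac1n\int f^2/h_K\,dS_K=\langle f,f\rangle$.

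\textbf{Step 2: splitting along the $E_m$.} Write $f=h_L=\sum_m f_m$ with $f_m=\pi_mf$. Averaging over $\SO(n-1)$ is exactly the projection onto the $\SO(n-1)$-invariant functions, so $f_0=\pi_0h_L=h_{\overline{L}}$. The terms of $Q_p$ split as follows.
\begin{enumerate}
\item Since $h_K\in E_0$, we have $\langle f,h_K\rangle=\langle f_0,h_K\rangle$.
\item The $E_m$ are mutually orthogonal, so $\langle f,f\rangle=\sum_m\langle f_m,f_m\rangle$.
\item $\mathcal{D}_K$ is $\SO(n-1)$-equivariant with $\pi_m\mathcal{D}_K=\mathcal{D}_m\pi_m$, so $\langle\mathcal{D}_Kf,f\rangle=\sum_m\langle \mathcal{D}_mf_m,f_m\rangle$.
\end{enumerate}
Hence
\begin{align*}
Q_p(h_L)=Q_p(h_{\overline{L}})+\sum_{m\geq1}\Big(\langle\mathcal{D}_mf_m,f_m\rangle+\tfrac{1}{n-1}\langle f_m,f_m\rangle-\tfrac{p}{n-1}\langle f_m,f_m\rangle\Big).
\end{align*}
The first term is $\leq 0$ by the hypothesis on $(K,\overline{L})$.

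\textbf{Step 3: the centre-of-mass condition.} The functions $x_1,\dots,x_{n-1}$ span $E_1\cap\{\text{linear functions}\}$, and $x_n\in E_0$. Condition (\ref{center_of_mass_condition}) says $\langle h_L,x_i\rangle=0$ for all $i$. For $i\leq n-1$ this gives $f_1\perp x_i$, which is exactly the orthogonality needed to apply the $m=1$ estimates. For $i=n$ it shows that $\overline{L}$ also satisfies (\ref{center_of_mass_condition}), since $\langle h_L,x_n\rangle=\langle f_0,x_n\rangle$; so the hypothesis on $(K,\overline{L})$ is the meaningful one.

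\textbf{Step 4: the stability bound.} The main step is to extract from the proof of Theorem \ref{stronger_loc_log_bm} the per-component bound
\begin{align*}
\langle\mathcal{D}_mf_m,f_m\rangle+\tfrac{1}{n-1}\langle f_m,f_m\rangle\leq-\tfrac{c_m(K)}{n}\int_{S^{n-1}}\frac{f_m^2}{h_K}dS_K=-c_m(K)\langle f_m,f_m\rangle ,
\end{align*}
valid for each $m\geq 1$ (with the orthogonality from Step 3 when $m=1$). This is (\ref{stronger_ev_ineq}) after the homotopy argument removes the parity restriction. With it, the $m$-th summand is at most $-\big(c_m(K)+\frac{p}{n-1}\big)\langle f_m,f_m\rangle$, which is $\leq0$ as soon as $p\geq-(n-1)c_m(K)$. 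For $m\geq 2$, $c_m(K)=\frac{m(m+n-3)}{n-1}-1$ is increasing in $m$, so $\min_{m\geq1}c_m(K)=\min\{c_1(K),c_2(K)\}$, and the assumption on $p$ covers all $m$ at once. Summing gives $Q_p(h_L)\leq Q_p(h_{\overline L})\leq0$.

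The point I expect to need the most care is Step 4. One must check that (\ref{stronger_ev_ineq}) genuinely holds per component $f_m$ for general parity, i.e.\ that the homotopy step in Theorem \ref{stronger_loc_log_bm} preserves the constant $c_m(K)$ and not merely the weaker bound $-\frac1{n-1}$. One must also check that the approximation in $K$ is compatible with the semicontinuity of $c_m$, as noted at the end of that proof.
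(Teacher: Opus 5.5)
Your proof is correct and is the paper's argument made explicit. In fact the paper's one-line ``plug into Theorem \ref{stronger_loc_log_bm}'' needs precisely your splitting $Q_p(h_L)=Q_p(h_{\overline L})+\sum_{m\geq1}(\cdots)$, because the theorem as stated has already bounded the $E_0$-part by $0$. The $m\geq1$ bound of Step 4 is obtained most cleanly, for arbitrary $K$, by applying Theorem \ref{stronger_loc_log_bm} as stated to $h_L-h_{\overline L}=\sum_{m\geq1}\pi_mh_L$, which is a difference of support functions satisfying (\ref{center_of_mass_condition}) with $V(h_L-h_{\overline L},K[n-1])=0$. This settles both worries in your last paragraph. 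It also avoids your reduction to $C^2_+$, which is delicate here because the hypothesis on $(K,\overline L)$ and the threshold on $p$ concern $K$ itself, not its approximants.

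Like the paper, your Step 1 silently assumes $p<n$. For $p>n$, multiplying (\ref{loc_log_bm_ineq}) by $\frac{n-p}{n-1}$ reverses it, and the statement is then actually false: take $K$ the unit ball and $h_L=1+\varepsilon x_1x_n$, so that $\overline L=K$.
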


	\begin{proof}
		Clearly as the $E_m$ are orthogonal with respect to the $L^2(S^{n-1},\mu_K)$-inner product
		\begin{align*}
			\sum_{m=1}^\infty c_m(K) \int_{S^{n-1}} \frac{(\pi_mh_L)^2}{h_K}dS_K\geq  & \ \min\{c_1(K),c_2(K)\}\sum_{m=1}^\infty  \int_{S^{n-1}} \frac{(\pi_mh_L)^2}{h_K}dS_K \\
			=& \ \min\{c_1(K),c_2(K)\}\left(\int_{S^{n-1}} \frac{h_L^2}{h_K}dS_K - \int_{S^{n-1}} \frac{h_{\overline{L}}^2}{h_K}dS_K\right). 
		\end{align*}
		Plugging this in Theorem \ref{stronger_loc_log_bm} proves the statement.
	\end{proof}
	
	\section*{Acknowledgments}
	%%%%%%%%%%%%%%%%%%%%%%%%%%%%%%%%%%%%%%%%%%%%%%%%%%%%%%%%%%%%%%%%%%%%%%%%%%%
	
	The author was supported by the Deutsche Forschungsgesellschaft (DFG) with the project \href{https://gepris.dfg.de/gepris/projekt/520350299}{520350299}. The author wants to thank A. Bernig, L. Brauner, S. Jarohs, S. Kistner, J. Knoerr, O. Ortega-Moreno and J. Ulivelli for many useful discussions.

\end{document}